\theoremstyle{plain}
\newtheorem{theorem}{Theorem}[section]
\newtheorem{proposition}[theorem]{Proposition}
\newtheorem{lemma}[theorem]{Lemma}
\theoremstyle{definition}
\theoremstyle{remark}
\newtheorem{remark}[theorem]{Remark}
\numberwithin{equation}{section}
\newcommand{\p}{\partial}
\newcommand{\ve}{\varepsilon}
\newcommand{\f}{\frac}
\newcommand{\na}{\nabla}
\newcommand{\la}{\lambda}
\newcommand{\al}{\alpha}
\renewcommand{\t}{\tilde}
\renewcommand{\o}{\omega}
\newcommand{\vp}{\varphi}
\renewcommand{\th}{\theta}
\newcommand{\g}{\gamma}
\newcommand{\si}{\sigma}
\newcommand{\dl}{\delta}
\renewcommand{\a}{q_0^{-\f 2{\gamma-1}}}
\newcommand{\ds}{\displaystyle}
\title[Blowup of smooth solutions]{Blowup of smooth solutions for
  general 2-D \\ quasilinear wave equations with small initial data}
\author[B.-B.~Ding]{Bingbing Ding} \address{Department of Mathematics
  and IMS, Nanjing University, Nanjing 210093, P.R.~of China}
\email{balancenjust@yahoo.com.cn}
\author[I.~Witt]{Ingo Witt}
\address{Mathematical Institute, University of G\"ottingen, D-37073
  G\"ottingen, Germany} 
\email{iwitt@uni-math.gwdg.de}
\author[H.-C.~Yin]{Huicheng Yin} \address{Department of Mathematics
  and IMS, Nanjing University, Nanjing 210093, P.R.~of China}
\email{huicheng@nju.edu.cn}
\thanks{Bingbing Ding and Huicheng Yin were supported by the NSFC
  (No.~10931007, No.~11025105) and by the Priority Academic Program
  Development of Jiangsu Higher Education Institutions. Ingo Witt was
  partly supported by the DFG via the Sino-German project ``Analysis
  of PDEs and Applications.''}
\keywords{Lifespan, blowup, blowup system, Nash-Moser-H\"ormander iteration}
\subjclass[2010]{Primary: 35L05; Secondary: 35L72}
\begin{document}


\begin{abstract}
For the 2-D quasilinear wave equation $\ds\sum_{i,j=0}^2g_{ij}(\na
u)\p_{ij}u=0$ with coefficients independent of the solution $u$, a
blowup result for small data solutions has been established in
\cite{1,2} provided that the null condition does not hold and a
generic nondegeneracy condition on the initial data is fulfilled.  In
this paper, we are concerned with the more general 2-D quasilinear
wave equation $\ds\sum_{i,j=0}^2g_{ij}(u, \na u)\p_{ij}u=0$ with
coefficients that depend simultaneously on $u$ and $\na u$. When the
null condition does not hold and a suitable nondegeneracy condition on
the initial data is satisfied, we show that smooth small data
solutions blow up in finite time. Furthermore, we derive an explicit
expression for the lifespan and establish the blowup mechanism.
\end{abstract}


\maketitle


\section{Introduction and main results}

In this paper, we discuss blowup of small data smooth
solutions of 2-D quasilinear wave equations
\begin{equation}\label{1-1}
\left\{ \enspace
\begin{aligned}
& \sum_{i,j=0}^2g_{ij}(u, \na u)\p_{ij}u=0,\\
& (u(0,x), \p_t u(0,x))= \ve (\vp_0(x), \vp_1(x)),
\end{aligned}
\right.
\end{equation}
where $x_0=t$, $x=(x_1, x_2)$, $\na=(\p_0, \p_1, \p_2)$, $\ve>0$ is
small, $\vp_i(x)\in C_0^{\infty}(B(0, M))$ ($i=0,1$) with $B(0, M)$
being the disk of radius $M>0$ centered at the origin, and the
coefficients $g_{ij}(u, \na u)$ ($0\le i, j\le 2$) are $C^{\infty}$
smooth in their arguments.

Without loss of generality, we write
\[
g_{ij}(u, \na u)=c_{ij}+d_{ij}u+
\sum_{k=0}^2e_{ij}^k\p_ku+O(|u|^2+|\na u|^2),
\]
where $c_{ij}=c_{ji}$, $d_{ij}=d_{ji}$, and $e_{ij}^k$ are constants,
$\ds\sum_{i,j=0}^2c_{ij}\p_{ij}=\p_t^2-\Delta$, $d_{00}=0$, and
$d_{ij}\not=0$ for at least one $(i,j)\not=(0,0)$.

In addition, we assume that $\ds\sum_{i,j,k=0}^2e_{ij}^k\p_ku\p_{ij}u$
does not satisfy the null condition. This means that
$\ds\sum_{i,j,k=0}^2e_{ij}^k\xi_i\xi_j\xi_k\not\equiv 0$ for the
variables $(\xi_0, \xi_1, \xi_2)$ with $\xi_0^2=\xi_1^2+\xi_2^2$ and
$(\xi_1, \xi_2)\not=0$ (see \cite{5, 15} for a definition of the null
condition).

We introduce polar coordinates $(r, \th)$ in ${\mathbb R}^2$,
\[
x_1=r\cos\th, \quad x_2=r\sin\th,
\]
where $r=\sqrt{x_1^2+x_2^2}$ and $\th\in[0,2\pi]$. We will need
the function
\begin{equation}\label{1-2}
F_0(\si,\th)=F_0(\si,
\o)\equiv\ds\f{1}{2\pi\sqrt{2}}\int_{\si}^{+\infty}\ds\f{R(s,\o;
\vp_1)-\p_sR(s,\o; \vp_0)}{\sqrt{s-\si}}\,ds,
\end{equation}
where $\si\in {\mathbb R}$, $\o\equiv(\o_1, \o_2)=(\cos\th, \sin\th)$,
and $R(s,\o;v)$ is the Radon transform of the smooth function $v(x)$,
i.e., $R(s,\o;v)=\int_{x\cdot\o=s}v(x)\,dS$.  From \cite[Theorem~6.2.2
  and (6.2.12)]{10}, one has that $F_0(\si,\th)\not\equiv 0$ unless
$(\vp_0(x), \vp_1(x))\equiv 0$. Furthermore, $F_0(\si,\th)\equiv 0$ for
$\si\ge M$ and $\ds\lim_{\si\to -\infty}F_0(\si,\th)=0$.

Set
\[
  F_1(\si,\th)=\biggl(\sum_{i,j=0}^2d_{ij}\hat{\o}_i\hat{\o}_j\biggr)
\p_\si F_0(\si,\th), \;
F_2(\si,\th)=\biggl(\sum_{i,j,k=0}^2e_{ij}^k\hat{\o}_i\hat{\o}_j
\hat{\o}_k\biggr)\p_\si^2F_0(\si,\th),
\]
where $(\hat{\o}_0,\hat{\o}_1,\hat{\o}_2)=(-1,\o_1,\o_2)$. Define the
function
\begin{equation*}
G_0(\si,\th)=
\ds\f{1}{F_1(\si,\th)}\ln
\biggl(1+\ds\f{F_1(\si,\th)}{F_2(\si,\th)}\biggr),
\quad(\si,\th)\in A,
\end{equation*}
where
\[
A=\biggl\{(\si,\th)\in (-\infty, M)\times [0, 2\pi]\colon
F_1(\si,\th)\not =0, F_2(\si,\th)\not =0,
1+\ds\f{F_1(\si,\th)}{F_2(\si,\th)}>0\biggr\}.
\]

Denote
\begin{equation}\label{1-4}
\tau_0=\inf_{(\si,\th)\in B}G_0(\si,\th),
\end{equation}
where $B=\{(\si, \th)\in A \colon G_0(\si,\th)>0\}$. We emphasize that
$0<\tau_0<\infty$ holds provided that $(\vp_0(x), \vp_1(x))\not\equiv
0$ (see Lemma~\ref{lem2-1} below).

We further require the following non-degeneracy condition to hold:
\begin{gather}
\text{There exists a unique  minimum point $(\si_0, \th_0)\in B$
such that} \tag{ND} \\
\text{$\tau_0=G_0(\si_0, \th_0)$ and the Hessian matrix
$(\na_{\si, \th}^2G_0)(\si_0,\th_0)$ is positive definite.}
\notag
\end{gather}
See Remark~\ref{rem2-1} below for an argument that (ND) generically
holds.

\begin{theorem}[Main Theorem]\label{thm1-1}
Let $(\vp_0(x), \vp_1(x))\not\equiv 0$ and assume that\/ \textup{(ND)}
holds. Then problem \eqref{1-1} has a unique $C^\infty$ solution
$u(t,x)$ for $0\leq t <T_\varepsilon$, where $T_{\ve}$ is its
lifespan that, in addition, satisfies
\begin{equation*}
\lim_{\varepsilon\rightarrow 0}\varepsilon\sqrt{T_\varepsilon}
=\tau_0>0.
\end{equation*}
Moreover, there exist a point $M_\ve=(T_\ve, x_\ve)$ and a constant
$C>1$ independent of $\ve$ such that

\textup{(i)} \ $u(t,x)\in C^1([0,T_\ve]\times{\mathbb R}^2)$ and
$\|u\|_{L^{\infty}((0,T_\ve)\times\mathbb
R^2)}+\|\na_{t,x}u\|_{L^{\infty}((0,T_\ve)\times{\mathbb R}^2)}\leq
C\ve$.

\textup{(ii)} \ $u\in C^2(([0,T_\ve]\times{\mathbb
  R}^2)\setminus\{M_{\ve}\})$ and it satisfies, for $0\leq t <T_\ve$,
\begin{equation}\label{1-6}
\ds\f{1}{C(T_\ve-t)}\le\|\nabla_{t, x}^2 u(t,\cdot)\|_{L^\infty(\mathbb
R^2)}\leq\f{C}{T_\ve-t}.
\end{equation}
\end{theorem}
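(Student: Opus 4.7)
The plan is to reduce the problem, in the wave zone, to a Burgers--Hopf type equation in the characteristic variable, so that $\tau_0$ and the explicit formula for $G_0$ emerge as its shock-formation time; then construct the exact solution by a Nash--Moser--H\"ormander iteration up to any time before blowup, and finally use a blowup system in eikonal coordinates to localize the singularity and obtain the rate in \eqref{1-6}.

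First, I would rescale time by $\tau=\ve\sqrt{t}$, so that $T_\ve\sim\tau_0^2/\ve^2$ corresponds to $\tau\sim\tau_0=O(1)$, and in the wave zone use the ansatz $u(t,x)\sim\ve\,r^{-1/2}U(\tau,\si,\th)$ with $\si=r-t$. Since $\sum c_{ij}\p_{ij}=\p_t^2-\Dl$ and the relevant null direction is $(\hat\o_0,\hat\o_1,\hat\o_2)=(-1,\o_1,\o_2)$, the leading contributions of $d_{ij}u\,\p_{ij}u$ and $e_{ij}^k\p_ku\,\p_{ij}u$ combine, after freezing $U$ at its Friedlander profile $F_0$ (a legitimate approximation on the slow scale $\tau$), into a 1-D equation for $V:=\p_\si U$ of Burgers type with a linear reaction term whose coefficients are precisely $F_1(\si,\th)$ and $F_2(\si,\th)$. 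Solving by characteristics -- on which $V$ grows exponentially at rate $F_1$ while $\si$ drifts at speed proportional to $F_2 V$ -- a shock in $V$ first forms on the ray through $(\si_0,\th)$ at
\[
\tau_*(\si_0,\th)=\f{1}{F_1(\si_0,\th)}\ln\!\Bigl(1+\f{F_1(\si_0,\th)}{F_2(\si_0,\th)}\Bigr)=G_0(\si_0,\th),
\]
so the first shock in the reduced equation occurs at $\tau_0=\inf_B G_0$, attained at the unique point $(\si_0,\th_0)$ by (ND).

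Second, I would build an approximate solution $u_{\mathrm{app}}$ from this profile (together with interior and far-field corrections) and run a Nash--Moser--H\"ormander scheme to produce the $C^\infty$ solution on $[0,t_*]\times{\mathbb R}^2$ for every $t_*$ with $\ve\sqrt{t_*}<\tau_0$. The linearized operator $L(u)v=\sum g_{ij}(u,\na u)\p_{ij}v+\text{l.o.t.}$ loses two derivatives, because its coefficients depend on $\na u$, so the iteration rests on tame weighted energy estimates based on Klainerman vector fields $\{\p_\mu,\Omega,L_0\}$ modified to respect the slowly evolving null geometry of $g_{ij}(u,\na u)$; these give assertion~(i) with $\|u\|_\infty+\|\na u\|_\infty\lesssim\ve$ uniformly as $t_*\nearrow T_\ve$. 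To then produce the matching upper bound on $T_\ve$, localize the singularity at one point $M_\ve$, and establish \eqref{1-6}, I would switch to eikonal coordinates $(\phi,\th,s)$, where $\phi$ solves the eikonal equation for $g_{ij}(u,\na u)$ along outgoing cones: $u$ and its ``good'' derivatives stay bounded, while $\p\phi/\p\si$, the focusing density of outgoing rays, vanishes precisely at $(T_\ve,x_\ve)$. Condition (ND) forces this vanishing to be quadratic and to occur at only one point, giving an isolated cusp-type singularity and, after transferring back to $(t,x)$, the two-sided bound \eqref{1-6} together with $\lim_{\ve\to 0}\ve\sqrt{T_\ve}=\tau_0$.

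The main technical obstacle is the Nash--Moser step: the scheme must be tame and hold uniformly as $\ve\sqrt{t_*}\nearrow\tau_0$, exactly where the solution is forming a cusp and $\na^2 u$ is becoming unbounded. Designing weights that keep $\|u\|_\infty+\|\na u\|_\infty=O(\ve)$ while allowing $\na^2 u$ to grow at precisely the rate $(T_\ve-t)^{-1}$ predicted by the Burgers reduction, and that simultaneously absorb the derivative loss coming from the $u$-dependence of the coefficients -- a feature absent from \cite{1,2} and the origin of the factor $F_1$ in $G_0$ -- is the technical heart of the argument.
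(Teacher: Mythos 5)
Your proposal follows essentially the same route as the paper: the slow-time profile equation solved by characteristics produces exactly $G_0$ and $\tau_0=\inf_B G_0$; an approximate solution built from this profile together with Klainerman vector-field estimates gives the lower bound $\varliminf_{\ve\to0}\ve\sqrt{T_\ve}\ge\tau_0$; and the upper bound, the localization of the singularity at a single point $M_\ve$, and the two-sided rate \eqref{1-6} come from Alinhac-type geometric blowup, i.e.\ the singular change of coordinates $\si=\phi(s,\th,\tau)$ with condition \eqref{H} verified via (ND) and a Nash--Moser--H\"ormander iteration with tame estimates for the degenerate linearized blowup system. The only real divergence is your pre-blowup existence step: the claim that the linearization ``loses two derivatives because the coefficients depend on $\na u$'' is not correct, and no Nash--Moser scheme is needed there --- the paper obtains the solution for $\ve\sqrt{1+t}\le b<\tau_0$ by a plain continuity/bootstrap argument on $v=u-u_a$ using standard weighted energy estimates, the smallness $\int\|Z^\al J_a\|_{L^2}\,dt\lesssim\ve^{3/2}$ of the error of the approximate solution, and the Klainerman--Sobolev inequality (Proposition~\ref{prop2-6}); Nash--Moser is reserved exclusively for the blowup system, where the free boundary $\tau=\tau_\ve$ and the vanishing of $\p_s\phi$ genuinely force it. Also, your heuristic of ``freezing $U$ at $F_0$'' should be replaced by the exact characteristics computation of Lemma~\ref{lem2-3}: $\p_qV$ is constant along characteristics while the Jacobian $\p_sq$ satisfies a linear ODE with coefficients $F_1,F_2$, which is what yields $\tau_*=F_1^{-1}\ln\bigl(1+F_1/F_2\bigr)$; the conclusion is the same, but the derivation is not a linearization around the Friedlander profile.
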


\begin{remark}\label{rem1-1}
For smooth small data solutions of
$\p_t^2u-\ds\sum_{i=1}^2\p_i(c_i^2(u)\p_iu)=0$, it has been shown in
\cite{6} that the blowup mechanism is of ODE type. This means that
$\na_{t,x}u$ develops a singularity at the lifespan time $T_{\ve}$,
while $u(t,x)$ remains continuous up to time $T_{\ve}$. As
Theorem~\ref{thm1-1} illustrates, here the blowup mechanism for smooth
small data solutions of \eqref{1-1} is of geometric type. This means
that only $\na_{t,x}^2u$ develops a singularity at time $T_{\ve}$,
while both $u(t,x)$ and $\na_{t,x}u$ remain continuous up to time
$T_{\ve}$. Theorem~\ref{1-1} is similar in scope to the ``lifespan
theorems'' of \cite{1, 2}, where 2-D nonlinear wave equations
$\p_t^2v-\Delta_xv +\ds\sum_{0\le i,j,k\le 2}g_{ij}^k\p_k
v\,\p_{ij}v=0$, wtih the $g_{ij}^k$ being constants, have been studied
in cases when the null condition does not hold.
\end{remark}

\begin{remark}\label{rem1-2}
For the 3-D wave equation $\p_t^2 u-\ds\sum_{\stackrel{0\le i\le j\le
    3}{(i, j)\neq(0, 0)}}(\dl_{ij}+d_{ij}u)\p_{ij} u=0$, where
$d_{ij}\in{\mathbb R}$ and $d_{ij}\not=0$ for some $(i, j)\not=(0,
0)$, with small initial data $(u(0,x), \p_t u(0,x))$ $=(\ve u_0(x),
\ve u_1(x))$, it has been shown in \cite{4, 19, 20} that smooth
solutions exist globally. On the other hand, for $n$-dimensional
nonlinear wave equations ($n=2,3$) with coefficients depending only on
the gradient of the solution, $\p_t^2 u-c^2(\p_t u)\Delta u=0$ and,
more generally, $\ds\sum_{i,j=0}^ng_{ij}(\na u)\p_{ij}u=0$, where
$t=x_0$, $x=(x_1, ..., x_n)$, $g_{ij}(\na
u)=c_{ij}+\ds\sum_{k=0}^nd_{ij}^k\p_ku+O(|\na u|^2)$, and the linear
part $\ds\sum_{i,j=0}^nc_{ij}\p_{ij}u$ is strictly hyperbolic with
respect to time $t$, it is known that small data smooth solutions
exist globally or almost globally if corresponding null conditions
hold (see \cite{3, 5, 15, 21, 22, 23} and the references therein),
otherwise smooth small data solutions blow up in finite time (see
\cite{1, 2, 6, 7, 8, 9, 10, 11, 12, 13, 14, 18}, and so forth).
\end{remark}

\begin{remark}\label{rem1-3}
From the results of \cite{17} it follows that the lifespan $T_{\ve}$
of smooth small data solutions of \eqref{1-1} satisfies $T_{\ve}\ge
C/\ve$ for small $\ve>0$. Similar to the proof of \cite[Theorem
  2.3]{18}, where 3-D quasilinear wave equations $\square
u=\ds\sum_{i=0}^3\p_iG_i(u, \na u) +G_4(\na u, \na^2u)$ with the $G_i$
$(0\le i\le 4)$ being quadratic forms have been treated, one can
further obtain that $T_{\ve}\ge C/\ve^2$. Here, in
Theorem~\ref{thm1-1}, $\tau_0/\ve^2$ is shown to be the precise bound
for the lifespan $T_\ve$.
\end{remark}

\begin{remark}\label{rem1-4}
For rotationally symmetric solutions $u(t,r)$ of Eq.~ \eqref{1-1}
(i.e., when $\vp_0(x)$, $\vp_1(x)$ and (1.1) are rotationally symmetric) it
can be shown by arguments as in \cite{7} and \cite{12} that
\eqref{1-6} continues to hold even without assumption (ND).
\end{remark}

\begin{remark}\label{rem1-5}
As $d_{00}=0$ and $d_{ij}=d_{ji}\not=0$ for at least one $(i,
j)\not=(0, 0)$, one has that under the restriction that $\xi_0=-1$ and
$\xi_1^2+\xi_2^2=1$, $\ds\sum_{i,j=0}^2d_{ij}\xi_i\xi_j\not=0$ holds
except for finitely many points $(\xi_1^0, \xi_2^0)$. An analogous
statement is true for
$\ds\sum_{i,j,k=0}^2e_{ij}^k\xi_i\xi_j\xi_k$. These simple facts will
be used in the proof of $\tau_0>0$ in Lemma~\ref{lem2-1} below.
\end{remark}

As in \cite{1, 2}, we are able to provide a more accurate
description of the behavior of solutions $u$ near the blowup point
$M_{\ve}$:

\begin{theorem}[Geometric Blowup Theorem]\label{thm1-2}
Choose constants $\tau_1$, $A_0$, $A_1$ and $\dl_0$ such that
$0<\tau_1<\tau_0$, $A_0<\si_0<A_1<M$, $A_0$ and $A_1$ are close to
$\si_0$, and $\dl_0>0$ is sufficiently small. Denote by $\mathcal D$
the domain
\[
\mathcal D\equiv\{(s,\th,\tau)\colon A_0\le s\le A_1, \,
\th_0-\dl_0\le\th\le\th_0+\dl_0, \, \tau_1\le\tau\le\tau_\ve\},
\]
where $\tau_\ve=\ve\sqrt{T_\ve}$. Then there exist a subdomain
$\mathcal D_0$ of $\mathcal D$ containing a point
$m_\ve=(s_\ve,\th_\ve,\tau_\ve)$ and functions $\phi(s,\th,\tau), \,
w(s,\th,\tau), \,v(s,\th,\tau) \in C^3(\mathcal D_0)$ such that, in
the domain $\mathcal D_0$, $\phi$ satisfies
\begin{equation}\label{H}
\left\{ \enspace
\begin{aligned}
\p_s\phi(s,\th,\tau)\geq
0,\quad\p_s\phi(s,\th,\tau)=
0 \ \Longleftrightarrow \ (s,\th,\tau)=m_\ve,\\
\p_{\tau s}\phi(m_\ve)<0,\quad\nabla_{s,\th}\p_s\phi(m_\ve)=0,
\quad\nabla^2_{s,\th}\p_s\phi(m_\ve)>0.
\end{aligned} \tag{H}
\right.
\end{equation}
Moreover, $\p_sw=v\p_s\phi$ and
\begin{equation}\label{1-7}
\p_sv(m_\ve)\neq 0.
\end{equation}
Let $G(\si,\th,\tau)$ be defined by
$G(\Phi)=w(s,\th,\tau)$ and $(\p_{\si}G)(\Phi)=v(s,\th,\tau)$ in the
domain $\Phi(\mathcal D_0)$, where $\Phi$ is the map
$\Phi(s,\th,\tau)=(\phi(s,\th,\tau),\th,\tau)$. Then
\[
  u(t,x)=\ds\f{\ve}{\sqrt{r}}\,G(r-t,\th,\ve\sqrt{t})
\]
solves \eqref{1-1} for $t<T_{\ve}$ near the point
\[
  M_{\ve}=\Phi(m_{\ve})=\left(T_{\ve},
 (T_{\ve}+\phi(s_{\ve}, \th_{\ve}, \tau_{\ve}))\cos\th_{\ve},
 (T_{\ve}+\phi(s_{\ve}, \th_{\ve}, \tau_{\ve}))\sin\th_{\ve}\right).
\]
\end{theorem}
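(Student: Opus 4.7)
My approach follows the blowup-system framework of Alinhac \cite{1, 2}: the singularity of $u$ at $M_\ve$ is resolved by changing to ``blown-up'' coordinates $(s,\th,\tau)$ in which the triple $(\phi,w,v)$ remains $C^3$-smooth, while the map $\Phi(s,\th,\tau)=(\phi(s,\th,\tau),\th,\tau)$ itself becomes non-invertible precisely at $m_\ve$. Geometrically, $\phi$ parametrizes the outgoing characteristic family of the quasilinear metric $g_{ij}(u,\na u)$, so the degeneracy $\p_s\phi(m_\ve)=0$ is the focusing of these characteristics (a caustic), and this focusing is the geometric manifestation of the $\na^2_{t,x}u$-blowup of Theorem~\ref{thm1-1}; the smoothness of $w$ and $v$ on the $(s,\th,\tau)$ side is what forces $u$ and $\na_{t,x}u$ themselves to stay continuous across the caustic.

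The ansatz $u(t,x)=(\ve/\sqrt{r})\,G(r-t,\th,\ve\sqrt{t})$ is motivated as follows. At leading order, the free 2-D wave equation yields the Friedlander radiation field $G\approx F_0(\si,\th)$ as in \cite[Ch.~6]{10}, while the slow variable $\tau=\ve\sqrt{t}$ is dictated by the expected lifespan $T_\ve\sim(\tau_0/\ve)^2$ recalled in Remark~\ref{rem1-3}. Substituting this ansatz into \eqref{1-1}, expressing all derivatives through the chain rule for $\Phi$, and imposing smoothness of $w=G\circ\Phi$ and $v=(\p_\si G)\circ\Phi$ (so that the relation $\p_sw=v\,\p_s\phi$ is automatic), I would obtain a closed quasilinear \emph{blowup system} for $(\phi,w,v)$ on $\mathcal D$. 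Stripping off the $\ve$-scaling reduces this system at leading order to a Burgers-type equation of the schematic shape $\p_\tau\p_\si G+(F_1\p_\si G+F_2)\p_\si^2 G=0$; the definitions of $F_1,F_2$ and $G_0$ arrange things so that the characteristic map for this Burgers equation is precisely the one whose first focusing time is $\tau_0$ defined by \eqref{1-4}, with focusing at the unique minimum $(\si_0,\th_0)$ provided by (ND).

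To actually solve the blowup system I would run the Nash-Moser-H\"ormander iteration announced in the keywords, starting from the approximate solution furnished by the Burgers reduction and correcting it via smoothing-regularized Newton steps. The main obstacle, and the heart of the proof, is establishing \emph{tame} linearized estimates for the blowup system with constants uniform on all of $\mathcal D$, including a neighborhood of $m_\ve$ where the principal symbol of the linearization degenerates as $\p_s\phi\to 0$. The weighted energy inequality needed to close the scheme is made possible by the positive-definiteness statement in \eqref{H} at $m_\ve$, which forces quadratic vanishing of $\p_s\phi$ in the $(s,\th)$ directions and gives a workable Carleman-type weight. Once convergence is established, the structural properties \eqref{H} and \eqref{1-7} are inherited from their explicit counterparts for the Burgers approximation: assumption (ND) translates directly into the sign of $\p_{\tau s}\phi(m_\ve)$ and into $\na^2_{s,\th}\p_s\phi(m_\ve)>0$, while $F_2(\si_0,\th_0)\neq 0$ (built into the definition of the set $A$) yields $\p_s v(m_\ve)\neq 0$. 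Setting $T_\ve=(\tau_\ve/\ve)^2$ and pulling $(\phi,w,v)$ back through $\Phi$ to define $G$ on $\Phi(\mathcal D_0)$ then produces the asserted solution of \eqref{1-1} in a neighborhood of $M_\ve$.
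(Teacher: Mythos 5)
Your plan coincides in all essentials with the paper's own proof: the ansatz $u=\f{\ve}{\sqrt r}G(r-t,\th,\ve\sqrt t)$ together with the singular change of variables $\Phi$ leads to the blowup system \eqref{3-3}, an approximate solution is built from the $\ve=0$ (Burgers-type) profile equation so that (ND) yields condition \eqref{H} and $\p_sv\neq0$ at the focusing point $(\si_0,\th_0)$ at time $\tau_0$, and the system is solved by weighted tame energy estimates for the linearization (exploiting \eqref{H} through degenerate weights) followed by the Nash-Moser-H\"ormander iteration. The only ingredients you pass over lightly --- the gluing of the exact local solution near $\tau=\tau_1$ to the profile solution, the rescaling to a fixed domain that copes with the a priori unknown height $\tau_\ve$ (the free boundary), and the persistence of \eqref{H} at a nearby point along the iteration via Alinhac's implicit function theorem --- are precisely the technical devices the paper imports from \cite{1,2}.
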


\begin{remark}\label{rem1-6}
As in \cite{1, 2}, Theorem \ref{thm1-2} provides a more accurate
description of the solution $u$ for $t< T_{\ve}$ near the blowup point
$M_{\ve}$ than the one given in Theorem~\ref{thm1-1}. For instance,
$G, \, \na G\in C(\Phi(\mathcal D_0))$ can be directly seen from
$(w(s,\th,\tau), \, v(s,\th,\tau))\in C^3(\mathcal D_0)$ and condition
\eqref{H}. Moreover, it follows from
$\p_{\si}^2G=\ds\f{\p_sv}{\p_s\phi}$,
$u(t,x)=\ds\f{\ve}{\sqrt{r}}\,G(r-t,\th,\ve\sqrt{t})$, condition
\eqref{H}, and a direct verification that there exists a positive
constant $C$ independent of $\ve$ such that
\[
\f{1}{C(T_{\ve}-t)}\le \|\na_{t,x}^2u\|_{L^{\infty}(\Phi(\mathcal D_0))}
\le\f{C}{T_{\ve}-t}.
\]
\end{remark}

Let us briefly comment on the proofs of Theorems~\ref{thm1-1}
and~\ref{thm1-2}. First we establish the lower bound on the lifespan
$T_{\ve}$.  As in \cite[Chapter~6]{10} and \cite{6}, this lower bound
is obtained by constructing an approximate solution $u_a(t,x)$ of
\eqref{1-1} and estimating the difference of the exact solution
$u(t,x)$ and $u_a(t,x)$ by applying the Klainerman-Sobolev inequality
from \cite{16} and further establishing a delicate energy estimate.
Next we show the upper bound on $T_{\ve}$. Motivated by the
``geometric blowup'' method of \cite{1,2} for handling quasilinear
wave equation $\p_t^2u-\Delta_xu+\ds\sum_{0\le i,j,k\le 2}g_{ij}^k\p_k
u\,\p_{ij}u=0$, we introduce the blowup system of \eqref{1-1} to study
the lifespan $T_{\ve}$ and blowup mechanism. That is, by performing a
singular change $\Phi$ of coordinates in the domain $\mathcal
D=\bigl\{(\si, \th, \tau)\colon -C_0\le\si\le M,\, 0\le\th\le 2\pi,
$ \linebreak $0<\tau_1\le\tau\le \tau_{\ve} \bigr\}$,
\[
(s, \th, \tau)\mapsto (\phi(s,\th,\tau),\th, \tau),
\]
where
\[
\text{$\si=\phi(s,\th,\tau_1)$ and $\p_s\phi=0$ at some point,}
\]
while $\si=r-t$, $\tau=\ve\sqrt{t}$, and $C_0>0$ is a fixed constant,
and by setting $G(\Phi)=w(s,\th, \tau)$ and
$(\p_{\si}G)(\Phi)=v(s,\th,\tau)$, we obtain a nonlinear partial
differential system for $(\phi, w, v)$ from the ansatz
$u(t,x)=\ds\f{\ve}{\sqrt{r}}\, G(r-t, \th, \ve \sqrt{t})$ and the
equation in \eqref{1-1}. This blowup system for \eqref{1-1} can be
shown to admit a unique smooth solution $(\phi, w, v)$ for
$\tau\le\tau_{\ve}$, where the pair $(\phi, v)$ satisfies properties
\eqref{H} and \eqref{1-7} of Theorem~\ref{thm1-2}. This enables us to
determine the blowup point at time $t=T_{\ve}$ and give a complete
asymptotic expansion of $T_{\ve}$ as well as a precise description of
the behavior of $u(t,x)$ close to the blowup point. In the process of
treating the resulting blowup system, as in \cite{1,2}, we will use
the Nash-Moser-H\"ormander iteration technique to overcome the
difficulties introduced by the free boundary $t=T_{\ve}$ and the
complicated nonlinear blowup system. To this end, the linearized
blowup system is solved first. Note that due to the simultaneous
appearance of $u$ and $\na u$ in the coefficients $g_{ij}(u, \na u)$,
the resulting blowup system of \eqref{1-1} exhibit some features
different from those in \cite{1, 2} (see \eqref{3-14}--\eqref{3-15}
below). For instance, compared with the linearized blowup system of
$\p_t^2u-\Delta u+\ds\sum_{0\le i, j, k\le 2}g_{ij}^k\p_ku\p_{ij}u=0$
in \cite{2}, certain coefficients $\al_1$ and $\al_2$ in \eqref{3-14}
are not small. Moreover, there are more terms in \eqref{3-15} to be
dealt with than in the corresponding equation ($3.1.1_b$) of \cite{2}.
Thanks to multipliers chosen as in \cite{1,2}, by an integration by
parts we derive energy estimates of the solutions of the linearized
blowup system directly and subsequently show its solvability.  Based
on these estimates and the standard Nash-Moser-H\"ormander iteration
technique, the proof of Theorem~\ref{thm1-2} is accomplished.

The paper is organized as follows: In Section~2, we construct an
approximate solution $u_a(t,x)$ of \eqref{1-1}, as in \cite{10}, and
establish some related estimates. These estimates allow us to obtain
the required lower bound on the lifespan $T_{\ve}$. In Section~3, the
blowup system of \eqref{1-1} is constructed and solved. This allows us
to prove Theorem~\ref{thm1-2}. The proof of Theorem~\ref{thm1-1} is
carried out in Section~4.

\smallskip

\textit{Notation}\/: \ Throughout this paper, we will denote by $Z$
any of the Klainerman vector fields in $\mathbb R_t^+\times{\mathbb
  R}^2$, i.e.,
\[
\p_t, \enspace \p_1, \enspace \p_2, \enspace
S=t\p_t+\ds\sum_{j=1}^2x_j\p_j, \enspace H_i=x_i\p_t+t\p_i,
\enspace i=1, 2, \enspace R=x_1\p_2-x_2\p_1,
\]
where $\p$ stands for $\p_t$ or $\p_i$ ($i=1,2$), and $\na_x$ stands
for $(\p_1, \p_2)$.


\section{The lower bound on the lifespan $T_\varepsilon$}

In this section, we will establish the lower bound of $T_\varepsilon$
as $\ve\to0$ for smooth solutions $u$ of problem \eqref{1-1}. This is
done as in the proof of \cite[Theorem 6.5.3]{10} by constructing an
approximate solution $u_a$ of \eqref{1-1} and then by estimating the
difference $u-u_{a}$. Eventually, one derives the lower bound of
$T_{\ve}$ by a continuous induction argument. The new ingredient in
this procedure is how to construct the approximate solution and to
look for the precise blowup time of the nonlinear profile equation of
\eqref{1-1}, then how to treat both the solution $u$ and its gradient
$\na u$ rather than only the gradient $\na u$ of the solution, as in
\cite{10}. Although some of the arguments are analogous to those in
\cite{6}, for the reader's convenience and in order to obtain the
upper bound of $T_{\ve}$ later, we will provide a complete proof.

Set the slow time variable to $\tau=\ve\sqrt{1+t}$ and assume that the
solution $u$ of \eqref{1-1} is approximated by
\[
\f{\varepsilon}{\sqrt r}\,V(q,\o,\tau),\quad r>0,
\]
where $q=r-t$, $\o=(\o_1, \o_2)=x/r\in\mathbb S^1$, and
$V(q,\o,\tau)$ solves by the equation
\begin{equation}\label{2-1}
\left\{ \enspace
\begin{aligned}
&\partial_{q\tau} V-\ds\sum_{i,j=0}^2\biggl(d_{ij}V+
\ds\sum_{k=0}^2e_{ij}^k\hat{\o}_k\p_q V\biggr)
\hat{\o}_i\hat{\o}_j\p_q^2 V=0,\\
&V(q,\o,0)=F_0(q,\o),\\
&\operatorname{supp}V\subseteq \{q\leq M\},\\
\end{aligned}
\right.
\end{equation}
where $(\hat{\o}_0,\hat{\o}_1, \hat{\o}_2)=(-1,\o_1,\o_2)$, and
$F_0(q, \o)$ has been defined in \eqref{1-2}.

Before studying the blowup problem for \eqref{2-1}, we are required to
establish the following two lemmas:

\begin{lemma}\label{lem2-1}
Let $\tau_0$ be given by \eqref{1-4}. Then $0<\tau_0<\infty$ provided
that $(\vp_0(x), \vp_1(x))\not\equiv 0$.
\end{lemma}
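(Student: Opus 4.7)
The statement has two parts, $\tau_0<\infty$ and $\tau_0>0$, and I would treat them separately after first collecting some uniform boundedness information for $F_0$ and its $\si$-derivatives. The substitution $s=\si+u^2$ in \eqref{1-2} rewrites $F_0$ as
\[
F_0(\si,\o)=\f1{\pi\sqrt{2}}\int_0^{+\infty}\bigl[R(\si+u^2,\o;\vp_1)-\p_sR(\si+u^2,\o;\vp_0)\bigr]\,du,
\]
which is manifestly smooth in $(\si,\th)\in(-\infty,M]\times[0,2\pi]$. Because the integrand is smooth with compact $s$-support in $[-M,M]$, the effective $u$-range is empty for $\si>M$ and has length at most $M/\sqrt{-M-\si}$ for $\si<-M$, so $F_0$, $\p_\si F_0$, $\p_\si^2 F_0$ are uniformly bounded on the strip and tend to $0$ as $\si\to-\infty$. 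In particular $\|F_1\|_{L^\infty}+\|F_2\|_{L^\infty}<\infty$.

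To obtain $\tau_0<\infty$ I would exhibit one point of $B$. By Remark~\ref{rem1-5}, the angular factors $A(\th):=\sum_{i,j}d_{ij}\hat\o_i\hat\o_j$ and $C(\th):=\sum_{i,j,k}e_{ij}^k\hat\o_i\hat\o_j\hat\o_k$ each vanish at only finitely many $\th$. The set $\{\th:F_0(\cdot,\th)\not\equiv0\}$ is open by continuity and nonempty since $F_0\not\equiv0$, so I can fix $\th_*$ in this open set that avoids both exceptional finite sets. The smooth bounded function $\si\mapsto F_0(\si,\th_*)$ is nonzero somewhere yet vanishes at $\pm\infty$, so $\p_\si^2F_0(\cdot,\th_*)$ must assume both signs. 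I would then pick $\si_*$ near an inflection point of $F_0(\cdot,\th_*)$ at which $\p_\si^2F_0$ has the same sign as $C(\th_*)$ (forcing $F_2(\si_*,\th_*)>0$) and at which $|\p_\si F_0(\si_*,\th_*)|$ is small enough that $|F_1/F_2|(\si_*,\th_*)<1$. Then $(\si_*,\th_*)\in A$ with $F_2>0$, hence $G_0(\si_*,\th_*)>0$, so $(\si_*,\th_*)\in B$ and $\tau_0\le G_0(\si_*,\th_*)<\infty$.

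To obtain $\tau_0>0$ I would exploit the rewriting $G_0=g(y)/F_2$ with $y:=F_1/F_2$ and $g(y):=\ln(1+y)/y$, continuously extended by $g(0):=1$. A short computation of $g'$ shows $g$ is strictly decreasing on $(-1,+\infty)$ from $+\infty$ to $0$, so $g(y)\ge g(1)=\ln2$ on $(-1,1]$. Splitting $B$ into the subsets $B_1:=B\cap\{y\le1\}$ and $B_2:=B\cap\{y>1\}$: on $B_1$, $F_2>0$ and $g(y)\ge\ln2$ give $G_0\ge\ln2/\|F_2\|_{L^\infty}$; on $B_2$, $F_1>F_2>0$ and $\ln(1+y)>\ln2$ give $G_0=\ln(1+y)/F_1\ge\ln2/\|F_1\|_{L^\infty}$. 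Hence $\tau_0\ge\ln2/\max(\|F_1\|_{L^\infty},\|F_2\|_{L^\infty})>0$.

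The step I expect to be most subtle is the preliminary uniform boundedness of $F_0$ and its $\si$-derivatives on the unbounded strip $(-\infty,M]\times[0,2\pi]$, because the strip is not compact and one must control behavior as $\si\to-\infty$; without this, the key bounds $G_0\ge\ln2/\|F_2\|_{L^\infty}$ and $G_0\ge\ln2/\|F_1\|_{L^\infty}$ would give no information. Once that is in hand, the construction of a point in $B$ is a straightforward genericity argument localized near an inflection point of $F_0(\cdot,\th_*)$, and the lower bound on $G_0$ reduces to the elementary monotonicity of $g$.
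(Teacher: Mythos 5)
Your lower bound $\tau_0>0$ is correct and is essentially a cleaner version of the paper's Step 4: on $B$ one has $F_2>0$, so writing $G_0=g(y)/F_2$ with $y=F_1/F_2$ and $g(y)=\ln(1+y)/y$ strictly decreasing, together with the uniform bounds $\|F_1\|_{L^\infty},\|F_2\|_{L^\infty}<\infty$ (which the paper takes directly from H\"ormander's estimate \eqref{2-7}, so your substitution computation, though correct, is not the delicate point), gives $\tau_0\ge \ln 2/\max\bigl(\|F_1\|_{L^\infty},\|F_2\|_{L^\infty}\bigr)>0$.

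The genuine gap is in the finiteness half, i.e.\ in producing even one point of $B$. Membership in $B$ amounts to $F_2>0$, $F_1\ne 0$ and $F_1+F_2>0$, and showing that such a point exists for every nonzero datum is exactly the nontrivial content of the lemma (the paper's Steps 1--3; your observation that $\p_\si^2F_0(\cdot,\th_*)$ takes both signs only reproduces Step 1, namely $F_2>0$ somewhere). Your construction does not bridge the remaining distance: near an inflection point of $F_0(\cdot,\th_*)$ one has $\p_\si^2F_0\to 0$, hence $F_2\to 0$, while $\p_\si F_0$ is typically at a local extremum there and bounded away from zero, so $|F_1/F_2|$ blows up near an inflection point rather than being $<1$; nothing in your argument shows that a point with $F_2>0$ and $|F_1|<F_2$ (or even with the weaker, and actually sufficient, property $F_1\ne 0$, $F_1+F_2>0$) exists. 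Nor may you invoke genericity: the lemma is asserted for all $(\vp_0,\vp_1)\not\equiv 0$, and the generic hypothesis (ND) enters only later. The missing idea is the one the paper uses in Step 3: assume $F_1+F_2\le 0$ throughout the region where $F_2>0$ and $F_1\ne 0$; this means $\p_\si\bigl(e^{(g_1(\th)/g_2(\th))\si}\,\p_\si F_0\bigr)\le 0$ there, and this monotonicity, combined with the boundary information ($F_0$ and $\p_\si F_0$ vanish as $\si\to-\infty$ and for $\si\ge M$, and on the boundary of each component either $F_1=0$ or $F_2=0$), forces $\p_\si F_0\equiv 0$, contradicting $(\vp_0,\vp_1)\not\equiv 0$. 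Without an argument of this kind your proof of $\tau_0<\infty$ does not go through.
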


\begin{proof}
We divide the proof into four steps.

\smallskip

\textit{Step 1\/}: $D\equiv\{(\si, \th)\in (-\infty, M)\times [0,
2\pi]\colon F_2(\si, \th)>0\}\not=\emptyset$.

\smallskip

If $D=\emptyset$, then $F_2(\si, \th)\le 0$ for all $(\si, \th)\in
(-\infty, M)\times [0, 2\pi]$. This means that
$\p_{\si}^2F_0(\si,\th)\le 0$ if
$\ds\sum_{i,j,k=0}^2e_{ij}^k\hat{\o}_i\hat{\o}_j \hat{\o}_k\ge 0$ or
$\p_{\si}^2F_0(\si,\th)\ge 0$ if
$\ds\sum_{i,j,k=0}^2e_{ij}^k\hat{\o}_i\hat{\o}_j \hat{\o}_k\le
0$. (Note that $\ds\sum_{i,j,k=0}^2e_{ij}^k\hat{\o}_i\hat{\o}_j
\hat{\o}_k=0$ holds only for finitely many values of $\th$, since the
bilinear form $\ds\sum_{i,j,k=0}^2e_{ij}^k\p_ku\p_{ij}u$ does not
satisfy the null condition.) Without loss of generality, one can
assume that $\p_{\si}^2F_0(\si,\th)\le 0$. Then $F_0(\si,\th)\equiv 0$
follows from $\ds\lim_{\si\to -\infty}F_0(\si, \th)=0, \ds\lim_{\si\to
  -\infty}\p_{\si}F_0(\si, \th)=0$, and $F_0(\si,\th)\equiv 0$ for
$\si\ge M$. Thus, $(\vp_0(x), \vp_1(x))\equiv 0$ which is a
contradiction.

\smallskip

\textit{Step 2\/}: $D_1\equiv\{(\si, \th)\in (-\infty, M)\times [0,
  2\pi]\colon F_1(\si, \th)\not=0, F_2(\si, \th)\not=0\}\not=\emptyset$.

\smallskip

Set $D_{11}\equiv\{(\si, \th)\in (-\infty, M)\times [0, 2\pi]\colon
F_1(\si, \th)\not=0\}$ and $D_{12}\equiv\{(\si, \th)\in (-\infty,
M)\times [0, 2\pi]\colon F_2(\si, \th)\not=0\}$. Then $D_1=D_{11}\cap
D_{12}$. If $D_1=\emptyset$, then for each fixed $\th\in [0, 2\pi]$,
the level set $D_1^{\th}=\{\si\in (-\infty, M)\colon (\si, \th)\in
D_1\}$ is empty. Note that $D_1^{\th}=D_{11}^{\th}\cap D_{12}^{\th}$,
where $D_{1i}^{\th}$ ($i=1,2$) is an open set. By Remark~\ref{rem1-5},
we can assume that
$D_{1i}^{\th}=\bigcup_{l=1}^{N_i^{\th}}(a_{i,l}^{\th}, b_{i,l}^{\th})$
($N_i^{\th}$ is finite or infinite), where the intervals
$(a_{i,l}^{\th}, b_{i,l}^{\th})$ $(1\le l\le N_i^{\th})$ are disjoint
for different $l$, moreover, $F_i(a_{il}^{\th}, \th)=F_i(b_{il}^{\th},
\th)=0$ for $i=1,2$. This immediately yields
$\p_{\si}F_0(a_{1l}^{\th}, \th)=\p_{\si}F_0(b_{1l}^{\th}, \th)=0$ and
$\p_{\si}^2F_0(a_{2l}^{\th}, \th)=\p_{\si}^2F_0(b_{2l}^{\th}, \th)=0$
from the expressions for $F_1(\si,\th)$ and $F_2(\si,\th)$,
respectively. Due to $D_1^{\th}=\emptyset$, one has
$\p_{\si}^2F_0(\si,\th)\equiv 0$ and further $\p_{\si}F_0(\si,
\th)\equiv 0$ for $\si\in (a_{1l}^{\th}, b_{1l}^{\th})$. Note that
$\p_{\si}F_0(\si, \th)=0$ on $(-\infty, M)\setminus D_{11}^{\th}$,
hence $\p_{\si}F_0(\si, \th)\equiv 0$ holds for all $\si\in (-\infty,
M)$ which yields $F_0(\si, \th)\equiv 0$ for each $\th\in [0,
  2\pi]$. This, however, contradicts $\vp_0(x)\not\equiv 0$ or
$\vp_1(x)\not\equiv 0$.

\smallskip

\textit{Step 3\/}: \ $B\not=\emptyset$.

\smallskip

It is readily seen that $B=\{(\si, \th)\in (-\infty, M)\times [0,
  2\pi]\colon F_1(\si, \th)\not=0, F_2(\si, \th)>0, F_1(\si,
\th)+F_2(\si, \th)>0\}$. Set $B_1\equiv \{(\si, \th)\in (-\infty,
M)\times [0, 2\pi]\colon F_1(\si, \th)\not=0,$ $F_2(\si, \th)>0\}$ and
denote by $B_1^{\th}$ the level set of $B_1$ for fixed $\th$. Further
write $B_1^{\th}=\bigcup_{l=1}^{N^{\th}}(c_l^{\th}, d_l^{\th})$ if
$B_1^{\th}\neq\emptyset$, where different intervals $(c_l^{\th},
d_l^{\th})$ are disjoint, and $F_1(\si,\th)\not=0$, $F_2(\si,\th)>0$
for $\si\in (c_l^{\th}, d_l^{\th})$. There are four possible cases for
the values of $F_i(\si, \th)$ $(i=1,2)$ at the endpoints of
$(c_l^{\th}, d_l^{\th})$.

\textit{Case (i)\/}: $F_1(c_l^{\th}, \th)=F_1(d_l^{\th}, \th)=0$.

\textit{Case (ii)\/}: $F_1(c_l^{\th}, \th)=F_2(d_l^{\th}, \th)=0$.

\textit{Case (iii)\/}: $F_2(c_l^{\th}, \th)=F_1(d_l^{\th}, \th)=0$.

\textit{Case (iv)\/}: $F_2(c_l^{\th}, \th)=F_2(d_l^{\th}, \th)=0$.

We will prove by contradiction that $B\neq\emptyset$. If
$B=\emptyset$, then $B^{\th}=\emptyset$ for any level set which means
$F_1(\si, \th)+F_2(\si, \th)\le 0$ for $\si\in (c_l^{\th},
d_l^{\th})$. We can assume that $g_1(\th)\equiv
\ds\sum_{i,j=0}^2d_{ij}\hat{\o}_i\hat{\o}_j>0$ and
$g_2(\th)\equiv\ds\sum_{i,j,k=0}^2e_{ij}^k\hat{\o}_i\hat{\o}_j
\hat{\o}_ka>0$ for a fixed $\th$ (Other cases are treated
analogously). One then has:

In case (i),
$\p_{\si}F_0(c_l^{\th},\th)=\p_{\si}F_0(d_l^{\th},\th)=0$. It follows
from $g_1(\th)\p_{\si}F_0(\si, \th)+g_2(\th)\p_{\si}^2F_0(\si,\th)\le
0$ that $\p_{\si}\biggl(e^{\f{g_1(\th)}{g_2(\th)}\si}
\p_{\si}F_0(\si,\th)\biggr) \le 0$ and further $\p_{\si}F_0(\si,
\th)\equiv 0$, which is a contradiction to $\p_{\si}^2F_0(\si,\th)$
$>0$ for $\si\in (c_l^{\th}, d_l^{\th})$.

In case (ii),
$\p_{\si}F_0(c_l^{\th},\th)=\p_{\si}^2F_0(d_l^{\th},\th)=0$. Together
with $\p_{\si}^2F_0(\si,\th)>0$ for $\si\in (c_l^{\th}, d_l^{\th})$,
this yields $\p_{\si}F_0(\si,\th)>0$ and further
$F_1(\si,\th)+F_2(\si,\th)>0$ for $\si\in (c_l^{\th}, d_l^{\th})$
which is a contradiction to the assumption
$F_1(\si,\th)+F_2(\si,\th)\le 0$ for $\si\in (c_l^{\th}, d_l^{\th})$.

In case (iii), $\p_{\si}^2F_0(c_l^{\th},\th)
=\p_{\si}F_0(d_l^{\th},\th)=0$. From $B^{\th}=\emptyset$ and
$\p_{\si}\biggl(e^{\f{g_1(\th)}{g_2(\th)}\si}
\p_{\si}F_0(\si,\th)\biggr) \le 0$ for $\si\in (c_l^{\th},
d_l^{\th})$, one has $\p_{\si}F_0(\si,\th)\ge 0$ and
$F_1(\si,\th)+F_2(\si,\th)>0$ for $\si\in (c_l^{\th}, d_l^{\th})$
which contradicts $F_1(\si,\th)+F_2(\si,\th)\le 0$ for $\si\in
(c_l^{\th}, d_l^{\th})$.

In case (iv), $\p_{\si}^2F_0(c_l^{\th},\th)=
\p_{\si}^2F_0(d_l^{\th},\th)=0$. In view of
$\p_{\si}F_0(\si,\th)\not=0$ and $\p_{\si}^2F_0(\si, \th)>0$ for
$\si\in (c_l^{\th}, d_l^{\th})$, one then obtains
$\p_{\si}F_0(\si,\th)<0$ for $\si\in (c_l^{\th}, d_l^{\th})$ by
$F_1(\si,\th)+F_2(\si,\th)\le 0$. (If $\p_{\si}F_0(c_l^{\th},\th)=0$
or $\p_{\si}F_0(d_l^{\th},\th)=0$, then the proof has been completed
in cases (ii) and (iii), respectively.)  Therefore, there exists an
interval $(e_{1l}^{\th}, e_{2l}^{\th})$ such that $(c_l^{\th},
d_l^{\th})\subset (e_{1l}^{\th}, e_{2l}^{\th})$,
$\p_{\si}F_0(\si,\th)<0$ for $\si\in (e_{1l}^{\th}, e_{2l}^{\th})$,
and $\p_{\si}F_0(e_{1l}^{\th},\th)=\p_{\si}F_0(e_{2l}^{\th},\th)=0$,
moreover, $F_1(\si,\th)+F_2(\si,\th)\le 0$. (Otherwise, if
$F_1(\si,\th)+F_2(\si,\th)>0$ at some point $(\bar\si, \bar\th)$, then
$\p_{\si}^2F_0(\bar\si, \bar\o)>0$ which contradicts
$B^{\th}=\emptyset$).  In this case, by
$\p_{\si}\biggl(e^{\f{g_1(\th)}{g_2(\th)}\si}\p_{\si}F_0(\si,\th)\biggr)
\le 0$ for $\si\in (e_{1l}^{\th}, e_{2l}^{\th})$ one has
$\p_{\si}F_0(\si,\th)\ge 0$ for $\si\in (d_l^{\th}, e_{2l}^{\th})$.
This obviously contradicts $\p_{\si}F_0(\si,\th)<0$ for $\si\in
(e_{1l}^{\th}, e_{2l}^{\th})$.

Collecting the analysis above, one arrives at
$B\neq\emptyset$.

\smallskip

\textit{Step 4\/}: $\tau_0>0$ is a finite number.

\smallskip

Since $B\not=\emptyset$, there exists at least one point
$(\t\si_0, \t\th)\in B$ such $\tau_0\leq G_0(\t\si, \t\th)<\infty$.

Next we show $\tau_0>0$.

Set $z=\ds\f{\p_{\si}F_0(\si, \th)}{\p_{\si}^2F_0(\si, \th)}$ for
$(\si, \th)\in B$. Then $G_0(\si,
\th)=\ds\f{1}{\p_{\si}^2F_0(\si,\th)}\ds\f{\ln(1+z)}{z}$ with
$(\si,\th)\in B$, $z>-1$, and $z\neq0$. If $z\in (-1, 0)$, then
 $G_0(\si,\th)\ge\ds\f{1}{\ds\max_{(\si,\th)\in B}
\p_{\si}^2F_0(\si,\th)}$. If $z\in (0, N]$ for a fixed $N>0$, then
$G_0(\si,\th)\ge\ds\f{C_N}{\ds\max_{(\si,\th)\in
B} \p_{\si}^2F_0(\si,\th)}$, where $0<C_N<1$. If $z>N$ and $N$ is
large enough, then this implies that
$\p_{\si}^2F_0(\si,\th)>0$ is small and
$\p_{\si}F_0(\si,\th)>0$ holds. In this case, $G_0(\si,\th)
=\ds\f{\ln(1+z)}{\p_{\si}F_0(\si,\th)}\ge\f{
\ln(1+N)}{\max(\p_{\si}F_0)}$. Therefore, $\tau_0$ is a positive
constant. 
\end{proof}

\begin{lemma}\label{lem2-2}
Define the function
\[
\t G_0(\si,\th)=\begin{cases}
\ G_0(\si,\th) & \textup{for $(\si,\th)\in A$,}\\
\ \ds\f{1}{F_2(\si,\th)} & \textup{for $(\si,\th)\in D$,}
\end{cases}
\]
where $D=\{(\si,\th)\in (-\infty, M)\times [0, 2\pi]\colon
F_1(\si,\th)=0, F_2(\si,\th)>0\}\neq\emptyset$. Let
\[
\t \tau_0=
\min\biggl\{\tau_0,\inf_{(\si,\th)\in D}\t G_0(\si,\th)\biggr\}.
\]
Then
\[
\t\tau_0=\tau_0.
\]
\end{lemma}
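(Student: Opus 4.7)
The inequality $\tilde\tau_0\le\tau_0$ is built into the definition of $\tilde\tau_0$ as a minimum, so the content of the lemma is the reverse inequality $\tilde\tau_0\ge\tau_0$. This reduces to showing that $\ds\f{1}{F_2(\si^*,\th^*)}\ge\tau_0$ at every point $(\si^*,\th^*)\in D$. The plan is to prove this by approximating such a point from within $B$ and using continuity of $G_0$, so that the inequality $\tau_0\le G_0$ on $B$ passes to the limit.

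The key observation is that on $A\cup D$ one has the uniform representation
\[
\tilde G_0(\si,\th)=\f{1}{F_2(\si,\th)}\,h\left(\f{F_1(\si,\th)}{F_2(\si,\th)}\right),
\]
where $h(y)=\ln(1+y)/y$, extended by $h(0)=1$, is continuous and strictly positive on $(-1,\infty)$. This shows two things at once. First, inside $A$ the condition $G_0>0$ defining $B$ is equivalent to $F_2>0$. Second, $\tilde G_0|_D=\f{1}{F_2}$ is precisely the limit of $G_0$ as one approaches $D$ from within $B$, since $F_1/F_2\to 0$ there.

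Given $(\si^*,\th^*)\in D$, it therefore suffices to produce a sequence $(\si_n,\th_n)\in B$ with $(\si_n,\th_n)\to (\si^*,\th^*)$: continuity will then yield $\tau_0\le G_0(\si_n,\th_n)\to \f{1}{F_2(\si^*,\th^*)}$. Since $F_2(\si^*,\th^*)>0$, a whole neighborhood of $(\si^*,\th^*)$ already lies in $\{F_2>0\}$, so the only nontrivial requirement is $F_1(\si_n,\th_n)\neq 0$. Writing $F_1(\si,\th)=g_1(\th)\,\p_\si F_0(\si,\th)$ with $g_1(\th)=\ds\sum_{i,j=0}^2 d_{ij}\hat\o_i\hat\o_j$, Remark~\ref{rem1-5} tells us that $g_1$ vanishes at only finitely many $\th$, while $F_2(\si^*,\th^*)>0$ forces $\p_\si^2 F_0(\si^*,\th^*)\neq 0$, so that $\si\mapsto \p_\si F_0(\si,\th^*)$ is strictly monotone near $\si^*$ and hence has $\si^*$ as its only possible zero there.

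The main obstacle is the degenerate case in which $g_1(\th^*)=0$ and $\p_\si F_0(\si^*,\th^*)=0$ hold simultaneously, since one must then perturb both variables at once in order to escape both zero loci. This is handled by first choosing $\th_n\to\th^*$ outside the finite set $\{g_1=0\}$, and then, for each such $\th_n$, choosing $\si_n$ slightly off $\si^*$ so that $\p_\si F_0(\si_n,\th_n)\neq 0$, using continuity of $\p_\si^2 F_0$ to preserve the strict monotonicity of $\p_\si F_0(\cdot,\th_n)$ near $\si^*$. The resulting sequence lies in $B$, the continuity step from the second paragraph applies, and together they give $\inf_{D}\tilde G_0\ge\tau_0$, hence $\tilde\tau_0=\tau_0$.
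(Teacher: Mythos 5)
Your argument is correct and follows essentially the same route as the paper: reduce to showing $\inf_{D}\t G_0\ge\tau_0$, approximate each point of $D$ by points of $B$, and pass to the limit using that $G_0=\f{1}{F_2}\,\f{\ln(1+F_1/F_2)}{F_1/F_2}\to\f{1}{F_2}$ as $F_1/F_2\to0$. The only difference is that you spell out why the approximating sequence in $B$ exists (via Remark~\ref{rem1-5} and the nonvanishing of $\p_\si^2F_0$ forced by $F_2>0$), a point the paper dispatches with the phrase ``it follows from the definitions of $B$ and $D$.''
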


\begin{proof}
It is enough to show $\tau_1\equiv \ds\min_{(\si,\th)\in D}\t
G_0(\si,\th)\ge \tau_0$. Indeed, by the definition of $\tau_1$, there
exists a sequence $\{(\si_n, \th_n)\}_{n\in\mathbb N}\subset D$ such that
$\tau_1>\t G_0(\si_n, \th_n)-1/n$. In addition, it follows
from the definitions of domains $B$ and $D$ that, for each fixed $n$,
there exists a sequence $\{(\si_n^k, \th_n^k)\}_{k\in\mathbb N}\subset B$
such that $\ds\lim_{k\to\infty}(\si_n^k, \th_n^k)=(\si_n,
\th_n)$. Therefore, $\tau_1>\ds\lim_{k\to\infty}G_0(\si_n^k,
\th_n^k)-\f{1}{n}\ge \tau_0-\f{1}{n}$ for any $n\in\mathbb N$ which
yields $\tau_1\ge\tau_0$. 
\end{proof}

\begin{remark}\label{rem2-1}
From the proofs of Lemmas \ref{lem2-1} and \ref{lem2-2}, one sees
that, generically, $\tau_0$~is attained at an interior point of the
set $B$. Indeed, note that $\p B=\p B_1\cup\p B_2\cup\p B_3\cup\p
B_4$, where $\p B_1=\{(\si, \th)\in (-\infty, M]\times [0, 2\pi]\colon
F_1\not=0,\, F_2=0,\, F_1+F_2>0\}$, $\p B_2=\{(\si, \th)\in (-\infty,
M]\times[0, 2\pi]\colon F_1\not=0, F_2>0, F_1+F_2=0\}$, $\p
B_3=\{(\si, \th)\in (-\infty, M]\times [0, 2\pi]\colon F_1=0, F_2>0,
F_1+F_2>0\}$, and $\p B_4=\{(\si, \th)\in (-\infty, M]\times[0,2\pi]\colon
\text{two or three of the values $F_1$, $F_2$, and $F_1+F_2$ at $(\si,
\th)$}$ $\text{are zero}\}$. Generi\-cal\-ly, $\p B_4$ consists of
finitely many points only. Near $\p B_1$ and $\p B_2$, $G_0(\si,\th)$
will be much larger than $\tau_0$, near $\p B_3$, one also has that
$\ds\inf_{\substack{(\si,\th)\in B\\(\si, \th)\to\p B_3}} G_0(\si, \th)\ge
\tau_0$, and only in nongeneric cases $G_0(\si, \th)$ attains its
minimum at the part $\p B_3$ of the boundary.
\end{remark}

Based on Lemmas~\ref{lem2-1}-\ref{lem2-2}, we now state for problem
\eqref{2-1}:

\begin{lemma}\label{lem2-3}
Problem \eqref{2-1} admits a $C^\infty$ solution $V$ for
$0\leq\tau<\tau_0$, where $\tau_0=\ds\min_{(\si,\th)\in
  B}G_0(\si,\th)$.
\end{lemma}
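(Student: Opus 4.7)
The plan is to solve problem \eqref{2-1} by the method of characteristics; the threshold $\tau_0$ will emerge as the first time at which the characteristic Jacobian vanishes. Setting $W=\p_q V$ and writing $a(\o)=\sum_{i,j=0}^2 d_{ij}\hat\o_i\hat\o_j$, $b(\o)=\sum_{i,j,k=0}^2 e_{ij}^k\hat\o_i\hat\o_j\hat\o_k$, the equation in \eqref{2-1} becomes the quasilinear transport equation $\p_\tau W-(aV+bW)\p_q W=0$. Its characteristics $q=Q(\si,\o,\tau)$ are governed by $dQ/d\tau=-(aV+bW)$ with $Q(\si,\o,0)=\si$, and $W$ is conserved along them, so that $W(Q(\si,\o,\tau),\o,\tau)\equiv W_0(\si,\o):=\p_\si F_0(\si,\o)$.

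The key observation is that although the transport equation is not pure Burgers', the ODE for the Jacobian $Q_\si$ turns out to be \emph{linear}. Using the support property $V\to 0$ as $q\to +\infty$ and the substitution $q'=Q(\si',\o,\tau)$,
\[
\bar V(\si,\o,\tau):=V(Q(\si,\o,\tau),\o,\tau)=-\int_\si^{+\infty}W_0(\si',\o)\,Q_{\si'}(\si',\o,\tau)\,d\si',
\]
hence $\bar V_\si(\si,\o,\tau)=W_0(\si,\o)\,Q_\si(\si,\o,\tau)$. Differentiating $dQ/d\tau=-a\bar V-bW_0$ in $\si$ and inserting this identity yields
\[
\f{d Q_\si}{d\tau}+F_1(\si,\o)\,Q_\si=-F_2(\si,\o),\q Q_\si(\si,\o,0)=1,
\]
since $a\,\p_\si F_0=F_1$ and $b\,\p_\si^2 F_0=F_2$. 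Explicit integration gives
\[
Q_\si(\si,\o,\tau)=\biggl(1+\f{F_2}{F_1}\biggr)e^{-F_1\tau}-\f{F_2}{F_1}\quad(F_1\neq 0),
\]
with the natural limits $Q_\si=1-F_2\tau$ when $F_1=0$ and $Q_\si=e^{-F_1\tau}$ when $F_2=0$. A direct case analysis in the signs of $(F_1,F_2)$ shows that the smallest positive zero of $\tau\mapsto Q_\si(\si,\o,\tau)$ is exactly $G_0(\si,\o)$ when $(\si,\o)\in B$, equals $\t G_0(\si,\o)=1/F_2(\si,\o)$ in the degenerate case $F_1=0,\,F_2>0$, and is $+\infty$ in every remaining configuration.

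Combining these formulas with Lemmas~\ref{lem2-1}--\ref{lem2-2}, I would obtain $Q_\si(\si,\o,\tau)\ge c(\dl)>0$ uniformly on $\mathbb R\times\mathbb S^1\times[0,\tau_0-\dl]$ for every $\dl>0$. Consequently the characteristic map $(\si,\o,\tau)\mapsto(Q(\si,\o,\tau),\o,\tau)$ is a $C^\infty$ diffeomorphism, and inverting it produces the desired smooth solution $V$ on $[0,\tau_0)$; the support condition $\operatorname{supp}V\subset\{q\le M\}$ is automatic because $W_0\equiv 0$ and $\bar V\equiv 0$ for $\si\ge M$ force $Q(\si,\o,\tau)\equiv\si$ there. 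The main technical obstacle is verifying the uniform positivity of $Q_\si$ as $\tau\nearrow\tau_0$ across all sign configurations of $(F_1,F_2,F_1+F_2)$ and in the boundary regime governed by $\t G_0$; this is precisely what the exhaustive case analyses in Lemmas~\ref{lem2-1}--\ref{lem2-2} were designed to handle, so that no lower blowup time hides in $\p B$.
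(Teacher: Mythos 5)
Your proposal is correct and follows essentially the same route as the paper: reduce to a transport equation for $\p_qV$, observe that the Jacobian $\p_sq$ of the characteristic map satisfies the linear ODE $\p_\tau(\p_sq)+F_1\p_sq=-F_2$ with explicit solution, and conclude via Lemmas~\ref{lem2-1}--\ref{lem2-2} that $\p_sq>0$ for $\tau<\tau_0$, so the characteristic map can be inverted to yield the smooth solution. The only cosmetic difference is that you recover $V$ by integrating $\p_qF_0\cdot\p_sq$ in $\si$, while the paper reads $V$ off algebraically from the characteristic ODE together with $q(\o,\tau;M)=M$; both give the same explicit formula.
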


\begin{proof}
Set $w(q,\o,\tau)=\p_q V(q,\o,\tau)$. Then it follows from \eqref{2-1} that
\begin{equation}\label{2-2}
\left\{ \enspace
\begin{aligned}
&\p_\tau w-\ds\sum_{i,j=0}^2\biggl(d_{ij}V+\ds\sum_{k=0}^2e_{ij}^k\hat{\o}_kw\biggr)
\hat{\o}_i\hat{\o}_j\p_q w=0,\quad (q,\tau)\in (-\infty,M]\times[0,\tau_0),\\
&w(q,\o,0)=\p_qF_0(q,\o).
\end{aligned}
\right.
\end{equation}
The characteristics $q=q(\o,\tau; s)$ of \eqref{2-2}
emanating from the point $(\o, 0)$ is defined by
\begin{equation}\label{2-3}
\left\{ \enspace
\begin{aligned}
& \ds\f {dq}{d\tau}(\o,\tau; s)=-\ds\sum_{i,j=0}^2\biggl(d_{ij}V
+\ds\sum_{k=0}^2e_{ij}^k\hat{\o}_kw\biggr)
\hat{\o}_i\hat{\o}_j(q(\o,\tau; s),\o,\tau),\\
& q(\o,0;s)=s.\\
\end{aligned}
\right.
\end{equation}
Along this characteristic curve, one has
\[
\left\{ \enspace
\begin{aligned}
& \ds\frac{dw}{d\tau}(q(\o,\tau; s),\o,\tau)=0,\\
& w(q(\o,0;s),0)=\p_qF_0(s,\o),\\
\end{aligned}
\right.
\]
which yields for $\tau<\tau_0$
\begin{equation}\label{2-4}
w(q(\o,\tau;s),\o,\tau)=\p_qF_0(s,\o)=\p_q V(q(\o,\tau;s),\o,\tau).
\end{equation}

On the other hand, by \eqref{2-3} and \eqref{2-4}, one obtains
\[
\left\{ \enspace
\begin{aligned}
&\p_{\tau
s}^2q(\o,\tau;s)=-\ds\sum_{i,j=0}^2\biggl(d_{ij}\p_qF_0(s,\o)\p_sq(\o,\tau;
s) +\ds\sum_{k=0}^2e_{ij}^k\hat{\o}_k\p^2_qF_0(s,\o)\biggr)
\hat{\o}_i\hat{\o}_ja,\\
&\partial_s q(\o,0;s)=1.
\end{aligned}
\right.
\]
This yields
\[
\p_sq(\o,\tau;s)=\exp\bigl(-F_1(s,\o)\tau\bigr)
\biggl(1+\ds\f{F_2(s,\o)}{F_1(s,\o)}\biggr)
-\f{F_2(s,\o)}{F_1(s,\o)}>0
\]
if $F_1(s,\o)\not=0$ and $\p_sq(\o,\tau,s)$ $=1-\tau F_2(s,\o)>0$ if
$F_1(s,\o)=0$ when $0\leq \tau<\tau_0$, respectively. Then
\[
q(\o,\tau;s) 
=q(\o,\tau;
M)+\int_M^s\biggl(\exp\bigl(-F_1(\rho,\o)\tau\bigr)
\biggl(1+\f{F_2(\rho,\o)}{F_1(\rho,\o)}\biggr)
-\f{F_2(\rho,\o)}{F_1(\rho,\o)}\biggr)d\rho,
\]
where $\ds\lim_{z\to 0}\biggl(e^{z\tau}\left(1-\f{y}{z}
\right)+\f{y}{z}\biggr)=1-\tau y$ has been used.

Note that $q(\o,\tau; M)=M$ such that $V(q,\o,\tau)$ satisfies the
boundary condition $V|_{q=M}=0$. Hence
\begin{align*}
V(q(\o,\tau;s),\o,\tau) &=-\ds\f{\p_\tau q(\o,\tau;
s)}{\ds\sum_{i,j=0}^2d_{ij}\hat{\o}_i\hat{\o}_j}
-\f{\ds\sum_{i,j,k=0}^2e_{ij}^k\hat{\o}_i\hat{\o}_j\hat{\o}_k}
{\ds\sum_{i,j=0}^2d_{ij}\hat{\o}_i\hat{\o}_ja}\,w \\
&=\int_M^s\biggl(\exp(-\tau F_1(\rho,\o))\p_qF_0(\rho,\o) \\
&\qquad\qquad +\bigl(\exp(-\tau
F_1(\rho,\o))-1\bigr)\ds\f{F_2(\rho,\o)\p_qF_0(\rho,\o)}{F_1(\rho,\o)}
\biggr)d\rho.
\end{align*}

By Lemmas \ref{lem2-1}-\ref{lem2-2} and the implicit function theorem,
$s=s(q,\o,\tau)$ is a smooth function for $\tau<\tau_0$. Therefore,
\begin{multline*}
V(q,\o,\tau)=\ds\int_M^{s(q,\o,\tau)}\biggl(\exp(-\tau
F_1(\rho,\o))\p_qF_0(\rho,\o) \\
+\bigl(\exp(-\tau
F_1(\rho,\o))-1\bigr)\ds\f{F_2(\rho,\o)\p_qF_0(\rho,\o)}{F_1(\rho,\o)}
\biggr)d\rho
\end{multline*}
is a smooth solution of \eqref{2-1} for $0\leq \tau<\tau_0$. 
\end{proof}

From \cite[Chapter 6]{10}, one has $F_0(q,\o)\in C^\infty({\mathbb
  R}\times\mathbb S^1)$, $\text{supp}F_0\subseteq (-\infty,M]\times
  \mathbb S^1$, and
\begin{equation}\label{2-7}
|\p_q^k\p_\o^\al F_0(q,\o)|\leq C_{k,\al}(1+|q|)^{-1/2-k}.
\end{equation}
In addition, from the explicit expression for $V(q,\o,\tau)$ we
conclude that, for $\tau\leq b<\tau_0$,
\begin{equation}\label{2-8}
|\p_q^{m+1}\p_\o^\al\p_\tau^l V(q,\o,\tau)|\leq C^b_{m,\al, l}(1+|q|)^{-3/2-m}
\end{equation}
and
\[
|\p_\o^\al\p_\tau^lV(q,\o,\tau)|\leq C^b_{\al,l}.
\]

We now start to construct an approximate solution of \eqref{1-1} for
$0\leq\tau=\ve\sqrt{1+t}<\tau_0$. Let $w_0$ be the solution of the
linear wave equation
\[
\left\{ \enspace
\begin{aligned}
& \p_t^2 w-\Delta w=0,\\
& w(0,x)=\vp_0(x),\quad \p_t w(0,x)=\vp_1(x).
\end{aligned}
\right.
\]
Choose a $C^\infty$ function $\chi(s)$ such that $\chi(s)=1$ for
$s\leq 1$ and $\chi(s)=0$ for $s\geq 2$. We then set, for $0\leq
\tau=\ve\sqrt{1+t}<\tau_0$,
\begin{equation*}
u_a(t,x)=\ve\chi(\ve t)w_0(t,x)+\f{\ve}{\sqrt{r}}
\left(1-\chi(\ve t)\right)\chi(-3\ve q)V(q,\o,\tau).
\end{equation*}

By \cite[Theorem 6.2.1]{10} and Lemma \ref{lem2-3}, one has that
$|Z^\al u_a|\leq C_{\al,b}\ve (1+t)^{-1/2}$ for $\tau\leq b<\tau_0$
and all multi-index $\alpha$. We further set
\[
J_a=\p_t^2 u_a-\Delta
u_a+\sum_{i,j=0}^2(d_{ij}u_a+\sum_{k=0}^2e_{ij}^k\p_ku_a)\p_{ij}u_a
+O(|u_a|^2+|\nabla u_a|^2)\sum_{i,j=0}^2\p_{ij}u_a.
\]
Then one has:

\begin{lemma}\label{lem2-4}
It holds
\[
\int_0^{b^2/\ve^2-1}\|Z^\alpha
J_a\|_{L^2}\,dt\leq C_{\al, b}\ve^{3/2}.
\]
\end{lemma}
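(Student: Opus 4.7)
The plan is to compute $J_a$ explicitly from the definition of $u_a$
and to bound $\|Z^\al J_a\|_{L^2}$ separately on three time intervals
determined by the cutoff $\chi(\ve t)$: (I)~$0\le t\le 1/\ve$, where
$\chi(\ve t)=1$ and $u_a=\ve w_0$; (II)~$1/\ve\le t\le 2/\ve$, the
transition; and (III)~$2/\ve\le t\le b^2/\ve^2-1$, where $\chi(\ve
t)=0$ and $u_a=\f{\ve}{\sqrt r}\chi(-3\ve q)V(q,\o,\tau)$. In each
region a different mechanism drives the estimate: the free wave
equation $\p_t^2w_0-\Dl w_0=0$ in (I); the profile equation
\eqref{2-1} in (III); asymptotic matching of $w_0$ with $V/\sqrt r$ in
(II). The target bound $\ve^{3/2}$ is saturated by region~(I); the
other two regions contribute strictly less.

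In region~(I), $(\p_t^2-\Dl)u_a=0$, so $J_a$ reduces to the quadratic
and cubic nonlinearities applied to $\ve w_0$. Since $Z^\al w_0$ again
solves $\p_t^2v-\Dl v=0$ with bounded-energy data, the
Klainerman-Sobolev inequality in $\mathbb R^2$ gives $\|Z^\al
w_0(t,\cdot)\|_{L^\infty(|q|\le M)}\le C_\al(1+t)^{-1/2}$, while
$\|Z^\al\p^2 w_0\|_{L^2}$ is uniformly bounded in $t$ by energy
conservation for $\p(Z^\al w_0)$. H\"older then yields $\|Z^\al
J_a(t,\cdot)\|_{L^2}\le C_\al\ve^2(1+t)^{-1/2}$, and integration over
$[0,1/\ve]$ gives $\le C_\al\ve^{3/2}$, which is the source of the
sharp bound.

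In region~(III), a direct expansion of $u_a=\ve V(q,\o,\tau)/\sqrt r$
using $q=r-t$, $\tau=\ve\sqrt{1+t}$ yields
$(\p_t^2-\Dl)u_a=-\f{\ve^2\p_{q\tau}V}{\sqrt{r(1+t)}}+O\bigl(\ve^3
r^{-1/2}(1+t)^{-1}+\ve r^{-5/2}\bigr)$, while the quadratic
nonlinearity equals $\f{\ve^2}{r}\hat F[V]+O(\ve^2 r^{-3/2})$, where
$\hat F[V]=\sum_{i,j=0}^2\bigl(d_{ij}V+\sum_{k=0}^2
e_{ij}^k\hat{\o}_k\p_qV\bigr)\hat{\o}_i\hat{\o}_j\p_q^2V$. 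By the
profile equation \eqref{2-1}, $\hat F[V]=\p_{q\tau}V$, so the two
$O(\ve^2/r)$-leading terms combine into $\ve^2\p_{q\tau}V\cdot\bigl(
r^{-1}-(r(1+t))^{-1/2}\bigr)$, pointwise of size
$O\bigl(\ve^2(1+|q|)r^{-2}|\p_{q\tau}V|\bigr)$. Using \eqref{2-8} and
the fact that the essential support has radial width $O(1/\ve)$ about
$r\sim t$, one obtains $\int_{2/\ve}^{b^2/\ve^2-1}\|Z^\al J_a\|_{L^2}
\,dt=o(\ve^{3/2})$ as $\ve\to 0$. Commutators with $\chi(-3\ve q)$ are
treated similarly using \eqref{2-7}.

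In region~(II), the commutator of $\p_t^2$ with $\chi(\ve t)$ produces
additional terms $2\ve\chi'(\ve t)\p_t(A-B)+\ve^2\chi''(\ve t)(A-B)$,
where $A=\ve w_0$ and $B=\ve\chi(-3\ve q)V/\sqrt r$. By the 2D linear
dispersive asymptotic (\cite[Theorem 6.2.1]{10}),
$w_0-F_0(q,\o)/\sqrt r=O(r^{-3/2})$ on $|q|\le M$; by $V(q,\o,0)=F_0$
from \eqref{2-1} combined with $\tau=O(\sqrt\ve)$ here, $(V-F_0)/\sqrt
r=O(\sqrt\ve/\sqrt r)$. These give $A-B=O(\ve^2)$ pointwise on the
support, so the commutator part of $J_a$ is $O(\ve^{5/2})$ in $L^2$,
and the non-commutator parts are bounded as in (I) and (III).
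Integrating the total bound $C_{\al,b}\ve^{5/2}$ over the interval of
length $1/\ve$ yields $C_{\al,b}\ve^{3/2}$. Summing the three
contributions gives the lemma. The main obstacle is region~(II):
maintaining the matching $A-B=O(\ve^2)$ after application of the
Klainerman fields $Z^\al$ requires combining the Klainerman-calculus
version of the linear dispersive asymptotic for $w_0$ with
quantitative control of $V-F_0$ in $\tau$ via \eqref{2-1}; once this
is in place, the $r$- and $q$-decay bookkeeping in all three regions
is routine from \eqref{2-7}--\eqref{2-8}.
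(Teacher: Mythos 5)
Your overall scheme is the same as the paper's: the same three time regions governed by $\chi(\ve t)$ (the paper's Cases B, C, A), cancellation of the leading $O(\ve^2/r)$ terms for $t\ge 2/\ve$ via the profile equation \eqref{2-1}, matching $w_0\approx r^{-1/2}F_0\approx r^{-1/2}V$ in the transition zone using \cite[Theorem 6.2.1]{10} and $V(q,\o,0)=F_0$, and your region-I bound $\|Z^\al J_a\|_{L^2}\le C\ve^2(1+t)^{-1/2}$ matches the paper's.

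One part of your accounting is wrong, though not fatally so: the claim that region III contributes only $o(\ve^{3/2})$, with the cutoff commutators ``treated similarly using \eqref{2-7}.'' When derivatives fall on $\chi(-3\ve q)$ one gets terms such as $-3\ve\chi'\p_\tau V$ (carrying the prefactor $\ve^2/r$), and these involve $V$ and $\p_\tau V$, which by \eqref{2-8} are merely bounded and have no decay in $q$; moreover they are supported where $|q|\sim 1/\ve$, an annulus of radial width $\sim 1/\ve$, so \eqref{2-7} (decay of $F_0$) is not the relevant tool. At fixed $t$ their $L^2({\mathbb R}^2)$ norm is of size $\ve^{5/2}(1+t)^{-1/2}$ --- this is exactly the paper's pointwise bound $C\ve^3(1+t)^{-1}\psi(-3\ve q)$ --- and integrating over $2/\ve\le t\le b^2/\ve^2-1$ gives a contribution of order $b\,\ve^{3/2}$, i.e.\ the same order as region I, not strictly less. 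The lemma's conclusion is unaffected, since the total remains $\le C_{\al,b}\ve^{3/2}$, but in region III these cutoff terms must be estimated explicitly (using the boundedness of $\p_\o^\al\p_\tau^l V$ and the $O(1/\ve)$ width of the support of $\chi'$) rather than absorbed into an $o(\ve^{3/2})$ remainder; with that correction your argument coincides with the paper's.
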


\begin{proof}
We divide the proof into three cases.

\textit{Case A.} \  $\ds\f{2}{\ve}\leq t\leq \f{b^2}{\ve^2}-1$.

In this case, $\chi(\ve t)=0$ and
$u_a(t,x)=\ds\f{\ve}{\sqrt{r}}\,\chi(-3\ve q)V(q,\o,\tau)$. Then
\begin{multline*}
J_a=-\f{\ve^2}{r}\biggl(\p_{q\tau}^2\bar{V}
-\Bigl(\sum_{i,j=0}^2d_{ij}\hat{\o}_i\hat{\o}_j\Bigr)\bar{V}\p_q^2\bar{V}
-\Bigl(\sum_{i,j,k=0}^2e_{ij}^k\hat{\o}_i\hat{\o}_j\hat{\o}_k\Bigr)
\p_q\bar{V}\p_q^2\bar{V}
\biggr)\\ +O\bigl(\f{\ve}{(1+t)^{5/2}}\bigr),
\end{multline*}
where $\bar{V}(q,\o,\tau)=\chi(-3\ve q)V(q,\o,\tau)$. In view of
\eqref{2-7}--\eqref{2-8} and the explicit expression for $V$,
\begin{multline*}
\p_{q\tau}\bar{V}
-\Bigl(\sum_{i,j=0}^2d_{ij}\hat{\o}_i\hat{\o}_j\Bigr)\bar{V}\p_q^2\bar{V}
-\Bigl(\sum_{i,j,k=0}^2e_{ij}^k\hat{\o}_i\hat{\o}_j\hat{\o}_k\Bigr)
\p_q\bar{V}\p_q^2\bar{V}\\ =\chi(1-\chi)\p_{q\tau}V-3\ve\chi'\p_\tau
V-\Bigl(\sum_{i,j=0}^2d_{ij}\hat{\o}_i\hat{\o}_j\Bigr)\chi
V\left(9\ve^2\chi''V-6\ve\chi'\p_qV\right)\\
-\Bigl(\sum_{i,j,k=0}^2e_{ij}^k\hat{\o}_i
\hat{\o}_j\hat{\o}_k\Bigr)\Bigl((-3\ve\chi'V+\chi\p_qV)
(9\ve^2\chi''V-6\ve\chi'\p_q V)-3\ve\chi'V\chi\p_q^2V\Bigr),
\end{multline*}
which yields the estimate
\[
  |Z^\al J_a|\leq
  C_{\al,b}\ve(1+t)^{-5/2}+C_{\al,b}\ve^3(1+t)^{-1}\psi(-3\ve q),
\]
where $\psi(s)$ is a cutoff function satisfying $\psi(s)=1$ for $1\leq
s\leq 2$ and $\psi(s)=0$ otherwise.

\smallskip

\textit{Case B.} \ $t\leq \ds\f{1}{\ve}$.

In this case, $\chi(\ve t)=1$ and $u_a=\ve w_0$. This yields
\[
J_a=\ve^2\sum_{i,j=0}^2(d_{ij}w_0+\sum_{k=0}^2e_{ij}^k\p_kw_0)\p_{ij}w_0
+\ve^3O\left(|w_0|^2+|\nabla w_0|^2\right)\sum_{i,j=0}^2\p_{ij}w_0.
\]
It then follows from a direct computation that
\[
|Z^\alpha J_a|\leq C_\alpha\ve^2(1+t)^{-1}(1+|q|)^{-2}.
\]

\smallskip

\textit{Case C.} \ $\ds\f{1}{\varepsilon}\leq t\leq\ds\f{2}{\varepsilon}$.

A direct computation gives
\[
u_a=\ve w_0+\ve \left(1-\chi(\ve t)\right)\left(r^{-1/2}\chi(-3\ve q)V-w_0\right)
\]
and then
\[
J_a=J_1+J_2+J_3+J_4,
\]
where
\begin{align*}
J_1&=\sum_{i,j=0}^2(d_{ij}u_a+\sum_{k=0}^2e_{ij}^k\p_ku_a)\p_{ij}u_a
+O\left(|u_a|^2+|\nabla u_a|^2\right)\sum_{i,j=0}^2\p_{ij}u_a,\\
J_2&=\ve(\p_t^2-\Delta)\Bigl((1-\chi(\ve t))r^{-1/2}\chi(-3\ve q)(V-F_0)
  \Bigr),\\
J_3&=\ve(\p_t^2-\Delta)\Bigl((1-\chi(\ve t))\chi(-3\ve q)(r^{-1/2}F_0-w_0)
  \Bigr),\\
J_4&=\ve(\p_t^2-\Delta)\Bigl((1-\chi(\ve t))(\chi(-3\ve q)-1)w_0\Bigr).
\end{align*}
It is easy to see that
\[
|Z^\alpha J_1|\leq C_{\alpha,b}\ve^2(1+t)^{-1}(1+|q|)^{-1}.
\]
Due to $(\p_t^2-\p_r^2)(V(q,\tau)-F_0(q))=-\p_{\tau
  q}V\ds\f{\ve}{\sqrt{1+t}}+\p_{\tau}^2V\ds\f{\ve^2}{4(1+t)}
-\p_{\tau}V\f{\ve}{4(1+t)^{3/2}}$,
$V(q,0)=F_0(q)$, and the fact that
$(\p_t^2-\Delta)v=r^{-1/2}\bigl((\p_t+\p_r)(\p_t-\p_r)
-r^{-2}(1/4+\p_\o^2)\bigr)(r^{1/2}v)$, one also has that
\[
  |Z^\alpha J_2|\leq C_{\alpha,b}\ve^3(1+|q|)^{-1}.
\]

By the \cite[Theorem 6.2.1]{10}, one has that, for any constant
$l>0$, if $r\geq lt$, then
\begin{equation*}
|Z^\alpha(w_0-r^{-1/2}F_0)|\leq C(1+t)^{-3/2}(1+|q|)^{1/2}.
\end{equation*}
On the other hand, from the fact that
$\p_t^2-\Delta=\ds\f{1}{r+t}\left(S+\o_1H_1+\o_2H_2\right)
(\p_t-\p_r)-\f{1}{r}\p_r-\f{1}{r^2}\Delta\o$, one concludes that
\[
|Z^\alpha J_3|\leq C_{\alpha,b}\ve^2(1+t)^{-3/2}.
\]

Since the support of $J_4$ with respect to the variable $q$ belongs to
the interval $\left(-\infty, -1/(3\ve)\right)$ and applying the fact
that, for any $\phi(t,r)\in C^1$,
\begin{equation}\label{2-11}
|\p \phi|\le\ds\f{C}{1+|t-r|}\,\sum_{|\beta|=1}|Z^{\beta}\phi|,
\end{equation}
one obtains the estimate
\[
  |Z^\alpha J_4|\leq C_{\alpha,b}\ve^{3}(1+t)^{-1/2}.
\]

Collecting the estimates above, one has
\[
|Z^\al J_a|\leq C_{\al,b}\ve^2(1+t)^{-3/2}+C_{\al,b}\ve^2(1+t)^{-1}(1+|q|)^{-1}.
\]
One arrives at
\begin{align*}
&\|Z^\al J_a(\cdot,t)\|_{L^2}\leq C_{\al, b}\ve^{5/2}(1+t)^{-1/2}
+C_{\al, b}\ve(1+t)^{-3/2},\quad \f{2}{\ve}\leq t\leq e^{b/\ve}-1,\\
&\|Z^\al J_a(\cdot,t)\|_{L^2}\leq C_\al\ve^2(1+t)^{-1/2},\quad t\leq\f{2}{\ve}.
\end{align*}

Consequently,
\[
\int_0^{e^{b/\ve}-1}\|Z^\al u_a(\cdot,t)\|_{L^2}\,dt\leq C_{\al,
b}\ve^{3/2},
\]
and Lemma \ref{lem2-4} is proved.

\end{proof}

\smallskip

For latter reference, we quote from \cite{18}:

\begin{lemma}\label{lem2-5}
For $f(t,x)\in C^1({\mathbb R}_+\times\mathbb R^2)$ with
$\operatorname{supp} f\subseteq\{(t, x)\colon r\leq M+t\}$, one has
\[
\|(1+|t-r|)^{-1}f\|_{L^2}\leq C\left\|\p_r f\right\|_{L^2},
\]
where the constant $C>0$ only depends on $M$.
\end{lemma}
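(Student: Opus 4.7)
The plan is to reduce the 2-D inequality to a one-dimensional Hardy-type inequality via polar coordinates. For fixed $t\ge 0$ and $\th\in[0,2\pi]$, set $g(r)=f(t,r\cos\th,r\sin\th)$; then $g\in C^1([0,\infty))$ vanishes on $[M+t,\infty)$, and since $\|(1+|t-r|)^{-1}f\|_{L^2}^2=\int_0^{2\pi}\!\int_0^\infty\f{|g|^2 r}{(1+|t-r|)^2}\,dr\,d\th$ and $\|\p_rf\|_{L^2}^2=\int_0^{2\pi}\!\int_0^\infty|g'|^2 r\,dr\,d\th$, it suffices to show
\[
\int_0^{M+t}\f{|g(r)|^2\,r}{(1+|t-r|)^2}\,dr \le C(M)\int_0^{M+t}|g'(r)|^2\,r\,dr
\]
uniformly in $t\ge 0$ and $\th$, after which integrating in $\th$ yields the lemma.

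The key idea is to split at $r=t$ (where the weight has a kink) and integrate by parts on each subinterval against a carefully chosen antiderivative of the weight. On $[t,M+t]$ I take $\zeta(r)=(r-t)/(1+r-t)$, for which $\zeta'=(1+r-t)^{-2}$, $\zeta(t)=0$, and $\zeta(r)(1+r-t)=r-t\le M$; on $[0,t]$ I take $\eta(r)=1/(1+t-r)$, for which $\eta'=(1+t-r)^{-2}$ and, crucially, $\eta(r)(1+t-r)\equiv 1$. These algebraic identities are what make the subsequent AM--GM absorption succeed with an $M$-dependent constant. On $[t,M+t]$, IBP gives $\int_t^{M+t}\f{g^2 r}{(1+r-t)^2}\,dr=-2\int gg'r\zeta\,dr-\int g^2\zeta\,dr$: the boundary terms vanish by $g(M+t)=0$ and $\zeta(t)=0$, the last integral is nonpositive (since $\zeta\ge 0$) and may be dropped, and AM--GM on the cross term---using $\zeta^2(1+r-t)^2=(r-t)^2\le M^2$---produces the bound $\int_t^{M+t}\f{g^2 r}{(1+r-t)^2}\,dr\le 4M^2\int_t^{M+t}|g'|^2 r\,dr$. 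On $[0,t]$, IBP gives $\int_0^t\f{g^2 r}{(1+t-r)^2}\,dr=tg^2(t)-2\int gg'r\eta\,dr-\int g^2\eta\,dr$: again the zeroth-order term has the right sign to be discarded, while AM--GM on the cross term, using $\eta^2(1+t-r)^2\equiv 1$, bounds it by $\tfrac12$ of the left-hand side plus $2\int_0^t|g'|^2 r\,dr$. Finally, the boundary contribution $tg^2(t)$ is controlled by writing $g(t)=-\int_t^{M+t}g'(s)\,ds$ and applying Cauchy--Schwarz on the width-$M$ interval to get $g^2(t)\le M\int_t^{M+t}|g'|^2\,ds$; then $s\ge t$ on this interval yields $\int_t^{M+t}|g'|^2\,ds\le t^{-1}\int_t^{M+t}|g'|^2 s\,ds$, so that $tg^2(t)\le M\int_t^{M+t}|g'|^2 s\,ds$ with the factor $t$ cancelling. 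Adding the two region estimates gives the target inequality with $C(M)\le 4M^2+2M+4$.

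The main obstacle---the reason why a single IBP on $[0,M+t]$ using the smooth global antiderivative $\phi(r)=(r-t)/(1+|t-r|)$ fails---is that the resulting AM--GM bound involves $\phi^2(1+|t-r|)^2=(r-t)^2$, which is unbounded as $t\to\infty$ on the region $r\ll t$. Splitting at $r=t$ and switching to $\eta$ on $[0,t]$ (which enjoys $\eta(1+t-r)\equiv 1$ rather than the growing bound $\le r-t$) removes this obstruction, at the cost of a new boundary term $tg^2(t)$; the latter is tamed precisely because $g(M+t)=0$ on an interval of fixed width $M$, and the factor $t$ is absorbed once the unweighted Cauchy--Schwarz bound is converted to an $s$-weighted one.
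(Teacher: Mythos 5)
Your proof is correct. Note that the paper itself gives no argument for this lemma: it is simply quoted from Lindblad's paper \cite{18} for later reference, so there is no in-paper proof to compare against. Your route --- reduction to a one-dimensional weighted inequality in $r$ via polar coordinates, then integration by parts against $\zeta(r)=(r-t)/(1+r-t)$ on $[t,M+t]$ and $\eta(r)=1/(1+t-r)$ on $[0,t]$, absorbing the cross terms by AM--GM and controlling the boundary term $t\,g^2(t)$ through $g(t)=-\int_t^{M+t}g'(s)\,ds$ with Cauchy--Schwarz on the width-$M$ interval and the trivial bound $s\ge t$ --- is the standard Hardy-type argument behind such lemmas, and all the details check out: $\zeta'=(1+r-t)^{-2}$, $\eta'=(1+t-r)^{-2}$, $\eta(t)=1$ gives exactly the boundary term $t\,g^2(t)$, the discarded zeroth-order terms have the correct sign, the absorption of the factor $\tfrac12$ is legitimate because both weighted integrals are finite on the compact intervals in question, and the case $t=0$ is harmless since the inner region is empty. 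The resulting constant $C(M)^2\le 4M^2+2M+4$ indeed depends only on $M$, uniformly in $t$ and $\theta$, and integrating in $\theta$ (and, if desired, in $t$) yields the stated estimate. Your closing remark correctly identifies why a single global antiderivative fails (the factor $(r-t)^2$ is unbounded for $r\ll t$), which is precisely the reason for the split at $r=t$.
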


Based on these preparations, we next establish:

\begin{proposition}\label{prop2-6}
For $\varepsilon>0$ sufficiently small and $0<\tau=\ve\sqrt{1+t}\leq
b<\tau_0$, \textup{Eq.~\eqref{1-1}} has a $C^\infty$ solution $u(t,x)$
which satisfies, for all $|\alpha|\le 2$,
\begin{equation}\label{2-12}
|Z^\alpha\p (u-u_a)|\leq
C_{b}\varepsilon^{3/2}(1+t)^{-1/2}(1+|t-r|)^{-1/2}.
\end{equation}
\end{proposition}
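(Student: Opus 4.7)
The plan is to set $w=u-u_a$, derive the quasilinear wave equation it satisfies, and solve it on the interval $0\le t\le b^2/\ve^2-1$ via a continuous induction (bootstrap) argument modelled on \cite[Chap.~6]{10} and \cite{6}. The main ingredients are a high-order energy estimate commuted with the Klainerman fields $Z$, the Klainerman--Sobolev inequality of \cite{16}, Lemma~\ref{lem2-4} to control the residual $J_a$, and Lemma~\ref{lem2-5} (of Hardy type) to deal with contributions in which $w$ appears undifferentiated.

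Substituting $u=u_a+w$ into \eqref{1-1} and Taylor-expanding $g_{ij}(u_a+w,\na u_a+\na w)$ around $(u_a,\na u_a)$ yields an equation of the form
\begin{equation*}
\sum_{i,j=0}^2 g_{ij}(u_a+w,\na u_a+\na w)\,\p_{ij}w \;=\; -J_a - L(w,\na w),
\end{equation*}
where $L$ is linear in $(w,\na w)$ with coefficients bounded by multiples of $\p^2 u_a$, $\p u_a$ and $u_a$; their decay is controlled by \eqref{2-7}--\eqref{2-8}. I would then fix $N_0$ large enough to absorb the Klainerman--Sobolev loss and introduce the bootstrap hypothesis
\begin{equation*}
|Z^\al\p w(t,x)|\le M\,\ve^{3/2}(1+t)^{-1/2}(1+|t-r|)^{-1/2},\q |\al|\le N_0,
\end{equation*}
on a maximal subinterval $[0,T^\ast]\subseteq[0,b^2/\ve^2-1]$ with $M=2C_b$; the aim is to recover the same bound with $C_b$ in place of $M$, which by the standard continuation theorem forces $T^\ast=b^2/\ve^2-1$.

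To close the loop I would commute $Z^\beta$ with the equation for all $|\beta|\le 2N_0$ and apply the usual $\p_t$-multiplier energy identity. The bootstrap assumption together with the pointwise estimates on $u_a$ shows that the perturbation of $\Box$ has size $O(\ve(1+t)^{-1/2})$; Gronwall's inequality and Lemma~\ref{lem2-4} then give
\begin{equation*}
\sum_{|\beta|\le 2N_0}\|Z^\beta\p w(t,\cdot)\|_{L^2}\le C_b\,\ve^{3/2}(1+t)^{C_0\ve}.
\end{equation*}
Since $(1+t)^{C_0\ve}$ stays bounded whenever $\tau=\ve\sqrt{1+t}\le b$, the Klainerman--Sobolev inequality converts this $L^2$ bound into the pointwise estimate \eqref{2-12} with a constant strictly smaller than $M$, thus closing the bootstrap and producing a smooth solution on the whole interval.

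\textbf{The main obstacle.} The genuinely new feature compared with \cite{10, 6}, in which only $\na u$ enters the coefficients, is the undifferentiated $u$ coming from the $d_{ij}u$ term. After substitution this contributes to $L$ a summand of the form $d_{ij}\,w\,\p_{ij}u_a$, whose $L^2$ norm is naturally controlled by $\|w\|_{L^2}$ rather than by $\|\p w\|_{L^2}$, and so is not directly accessible from the bootstrap. The remedy is Lemma~\ref{lem2-5}: combined with the finite-propagation support $\operatorname{supp} w\subseteq\{r\le t+M\}$ it trades $\|(1+|t-r|)^{-1}w\|_{L^2}$ for $\|\p_r w\|_{L^2}$ without loss in $t$. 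The extra weight $(1+|t-r|)^{-1/2}$ built into the bootstrap ansatz is precisely what leaves enough room to reabsorb these terms into the energy inequality. Once this Hardy-type device is in place, the rest of the argument follows the template of \cite{10, 6} without further essential change.
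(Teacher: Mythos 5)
Your proposal is correct and takes essentially the same route as the paper's proof: write $v=u-u_a$, run a continuity/bootstrap argument on $|Z^\al\p v|$, close it with a $Z$-commuted energy estimate using the $\p_t$ multiplier, control the residual by Lemma~\ref{lem2-4} and the undifferentiated $d_{ij}u$-contributions by the Hardy-type Lemma~\ref{lem2-5}, and convert the $O(\ve^{3/2})$ energy bound into \eqref{2-12} via the Klainerman--Sobolev inequality. The only cosmetic differences are the level at which the bootstrap is posed (your $M\ve^{3/2}$ with large $M$ versus the paper's hypothesis at level $\ve$ closed to $\ve/2$) and the number of commuted fields ($2N_0$ versus the paper's $|\al|\le 4$), neither of which alters the substance of the argument.
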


\begin{proof}
Let $v=u-u_a$. Then
\begin{equation}\label{2-13}
\left\{ \
\begin{aligned}
&\p_t^2 v-\Delta
v+\ds\sum_{i,j=0}^2(d_{ij}u+\ds\sum_{k=0}^2e_{ij}^k\p_ku)\p_{ij}v
+O(|u|^2+|\nabla u|^2)\ds\sum_{i,j=0}^2\p_{ij}v\\
& \quad
=-J_a-\ds\sum_{i,j=0}^2(d_{ij}v +\sum_{k=0}^2e_{ij}^k\p_kv)\p_{ij}u_a\\
&\qquad -O(|v|^2+2|u_av|+|\nabla v|^2+2|\nabla u_a\cdot\nabla
v|)\ds\sum_{i,j=0}^2\p_{ij} u_a,\\
&v(0,x)=\p_t v(0,x)=0.
\end{aligned}
\right.
\end{equation}
We make the induction hypothesis that, for a certain
$T\leq b^2/\ve^2-1$,
\begin{equation}\label{2-14}
|Z^\alpha\p v|\leq\varepsilon(1+t)^{-1/2}(1+|t-r|)^{-1/2},\quad
|\alpha|\leq 2, \ t\leq T,
\end{equation}
which implies that, for $|\alpha|\leq 2$ and $t\leq T$,
\begin{equation}\label{2-15}
|Z^\alpha v|\leq C\varepsilon\left(1+t\right)^{-1/2}(1+|t-r|)^{1/2}.
\end{equation}

To prove the validity of \eqref{2-14}, we will show that, for
$\varepsilon>0$ sufficiently small,
\begin{equation}\label{2-16}
|Z^\alpha\p
v|\leq\f{\ve}{2}\,(1+t)^{-1/2}(1+|t-r|)^{-1/2},\quad
|\alpha|\leq 2, \ t\leq T,
\end{equation}
and then utilize the continuity method to obtain $\ve\sqrt{1+T}=b$.

Applying $Z^\alpha$ on both hand sides of \eqref{2-13} and using
$[Z^{\al}, \p_t^2-\Delta] =\ds\sum_{|\beta|<|\al|}C_{\al\beta}
Z^{\beta}$ $(\p_t^2-\Delta)$ yields, for $|\alpha|\leq 4$,
\begin{equation}\label{2-17}
  \left(\p_t^2-\Delta\right)Z^\alpha v=Z^\alpha
  G-\sum_{|\beta|<|\alpha|}C_{\alpha\beta}Z^\beta G,
\end{equation}
where
\begin{multline*}
G=-\sum_{i,j=0}^2\bigl(d_{ij}u+\sum_{k=0}^2e_{ij}^k\p_ku\bigr)\p_{ij}v
-O\left(|u|^2+|\nabla u|^2\right)\sum_{i,j=0}^2\p_{ij}v-J_a\\
-\sum_{i,j=0}^2(d_{ij}v
+\sum_{k=0}^2e_{ij}^k\p_kv)\p_{ij}u_a -O\left(|v|^2+2|u_av|+|\nabla
v|^2+2|\nabla u_a\cdot\nabla v|\right)\sum_{i,j=0}^2\p_{ij}
u_a.
\end{multline*}
Thus one obtains from \eqref{2-17} that
\begin{multline}\label{2-18}
\p_t^2Z^\al v-\Delta Z^\al v \\
+\sum_{i,j=0}^2\bigl(d_{ij}u+\sum_{k=0}^2e_{ij}^k
\p_ku\bigr)\p_{ij}Z^\al v
+O\left(|u|^2+|\nabla u|^2\right)\sum_{i,j=0}^2\p_{ij}Z^\al v=F,
\end{multline}
where
\begin{multline*}
 F=-\sum_{i,j=0}^2\sum_{\substack{\al_1+\al_2=\al\\ |\al_1|\geq 1}}
Z^{\al_1}\bigl(d_{ij}u+\sum_{k=0}^2e_{ij}^k\p_ku\bigr)Z^{\al_2}\p_{ij} v \\
\begin{aligned}
  & +Z^\al\biggl(-J_a -\sum_{i,j=0}^2\bigl(d_{ij}v
  +\sum_{k=0}^2e_{ij}^k\p_kv\bigr)\p_{ij}u_a \\
  & -O \left(|v|^2+2|u_av|+|\nabla
  v|^2+2|\nabla u_a\cdot\nabla v|\right)\sum_{i,j=0}^2\p_{ij} u_a\biggr) \\
  & -\sum_{i,j=0}^2\bigl(d_{ij}u+\sum_{k=0}^2e_{ij}^k\p_ku\bigr)[Z^\al,\p_{ij}]v
  -O\left(|u|^2+|\nabla
  u|^2\right)\sum_{i,j=0}^2[Z^\al,\p_{ij}]v \\
  & -\sum_{\substack{\al_1+\al_2=\al\\
      |\al_1|\geq 1}}Z^{\al_1}\Bigl(O\left(|u|^2+|\nabla
  u|^2\right)\sum_{i,j=0}^2Z^{\al_2}\p_{ij}v\Bigr)
  -\sum_{|\beta|<|\alpha|}C_{\alpha\beta}Z^\beta G.
\end{aligned}
\end{multline*}

Next we derive an estimate on $\|\p Z^\alpha v\|_{L^2}$ from
Eq.~\eqref{2-18}. Define the energy
\begin{multline*}
E(t)= \f{1}{2}\sum_{|\al|\leq 4}\int_{{\mathbb R}^2}
\biggl(|\p_t Z^\alpha v|^2+|\nabla_x Z^\al v|^2
-\sum_{i,j=0}^2\bigl(d_{ij}u
+\sum_{k=0}^2e_{ij}^k\p_ku\bigr)(\p_iZ^\al v)(\p_jZ^\al v) \\
-O\left(|u|^2+|\nabla u|^2\right)\sum_{k=0}^2(\p_iZ^\al v)(\p_jZ^\al
v)\biggr)\,dx.
\end{multline*}
Multiplying both sides of \eqref{2-18} by $\p_t Z^\alpha v$ ($|\al|\le
4$), integrating by parts, and noting that $|\p^\beta u|=|\p^\beta
u_a+\p^\beta v|\leq C_b\ve(1+t)^{-1/2}$ ($|\beta|=1,2$) from the
construction of $u_{a}$ and assumption \eqref{2-14}, one arrives at
\begin{equation}\label{2-19}
E'(t)\leq \f{C_b\ve}{\sqrt{1+t}}\,E(t)
+\sum_{|\al|\leq 4}\int_{{\mathbb R}^2}|F||\p_t Z^\al v|\,dx.
\end{equation}

We now treat each of the terms appearing in the integral
$\ds\sum_{|\al|\leq 4}\int_{{\mathbb R}^2}|F||\p_t Z^\al v|\,dx$
separately.

\smallskip

\textit{(A) \ Terms
  $\ds\sum_{\substack{\al_1+\al_2=\beta\\|\al_1|\geq 1}} \int_{\mathbb
    R^2}\Bigl|Z^{\al_1}\bigl(d_{ij}u+\sum_{k=0}^2e_{ij}^k\p_ku+O\left(|u|^2+|\nabla
  u|^2\right)\bigr)Z^{\al_2}\p_{ij} v\Bigr| |\p_t Z^\alpha v|\,dx$ with
  $|\beta|\leq|\alpha|$.}

It suffices to estimate $\ds\sum_{\substack{\al_1+\al_2=\beta\\|\al_1|\geq 1}} \int_{\mathbb
R^2}\Bigl|Z^{\al_1}\bigl(d_{ij}u+\sum_{k=0}^2e_{ij}^k\p_ku\bigr)Z^{\al_2}\p_{ij} v
\Bigr||\p_t Z^\alpha v|\,dx$. Note that:

(i) \ By assumption \eqref{2-15}, one has, for $|\delta|\le 2$,
\begin{equation}\label{2-20}
\bigl|(1+|t-r|)^{-1}Z^{\delta}(u_a+v)\bigr|\le \ds\f{C_b\,\ve}{\sqrt{1+t}}.
\end{equation}

(ii) \ By \eqref{2-20} and \eqref{2-11}, one has, for
$|\al_1|+|\al_2|=|\beta|\leq 4$ with $|\al_1|\ge 1$,
\begin{multline}\label{2-21}
\int_{{\mathbb R}^2}\bigl|Z^{\al_1}(d_{ij}u+e_{ij}^k\p_ku)(Z^{\al_2}\p_{ij} v)
\p_t Z^\alpha v\bigr|\,dx\\
\begin{aligned}
&\leq C_b\int_{\mathbb R^2}(|Z^{\al_1}u_a|+|Z^{\al_1}\p_ku_a|)\cdot|(Z^{\al_2}\p_{ij} v)
(\p_t Z^\al v)|\,dx \\
& \qquad
+C\int_{{\mathbb R}^2}(|Z^{\al_1}v|+|Z^{\al_1}\p_kv|)\cdot|(Z^{\al_2}
\p_{ij} v)(\p_t Z^\al v)|\,dx\\
& \leq\f{C_{b}\,\ve}{\sqrt{1+t}}\,E(t)+C\sum_{|\g|\leq
|\al_2|+1}\int_{\mathbb R^2}|(1+|t-r|)^{-1}Z^{\al_1} v|\cdot|Z^\g\p v|\cdot|\p_t Z^\al
v|\,dx\\
&\qquad+C\sum_{|\g|\leq|\al_2|+1}\int_{\mathbb
R^2}|Z^{\al_1} \p_kv|\cdot|Z^\g\p v|\cdot|\p_t Z^\al
v|\,dx.
\end{aligned}
\end{multline}
Note that there is at most one number larger than 2 between $|\al_1|$
and $|\g|$. If $|\al_1|>2$, then $|\g|\le 2$. Thus, by
Lemma~\ref{lem2-5} applied to $(1+|t-r|)^{-1}Z^{\al_1} v$ and
assumption \eqref{2-14}, one arrives at
\begin{multline}\label{2-22}
\int_{{\mathbb R}^2}|(1+|t-r|)^{-1}Z^{\al_1} v|\cdot|Z^\g\p v|\cdot|\p_t Z^\al v|\,dx\\
+\int_{\mathbb R^2}|Z^{\al_1} \p_kv|\cdot|Z^\g\p v|\cdot|\p_t Z^\al
v|\,dx
\leq \f{C_b\,\ve}{\sqrt{1+t}}\,E(t).
\end{multline}
If $|\g|>2$, then $|\al_1|\le 2$. It follows from \eqref{2-15} that
$\bigl|(1+|t-r|)^{-1}Z^{\al_1} v\bigr|\leq C\ve(1+t)^{-1/2}(1+|t-r|)^{-1/2}$
 which leads to
\begin{multline}\label{2-23}
\int_{{\mathbb R}^2}|(1+|t-r|)^{-1}Z^{\al_1} v|\cdot|Z^\g\p v|\cdot|\p_t Z^\al v|\,dx \\
+\int_{{\mathbb R}^2}|Z^{\al_1} \p_kv|\cdot|Z^\g\p v|\cdot|\p_t Z^\al v|\,dx
\leq \f{C_b\,\ve}{\sqrt{1+t}}\,E(t).
\end{multline}
Substituting \eqref{2-22}--\eqref{2-23} into \eqref{2-21} yields
\begin{equation}\label{2-24}
\sum_{\substack{\al_1+\al_2=\beta\\|\al_1|\geq 1}} \int_{\mathbb
R^2}\Bigl|Z^{\al_1}\bigl(d_{ij}u+\sum_{k=0}^2e_{ij}^k\p_ku \bigr)Z^{\al_2}
\p_{ij} v\Bigr||\p_t Z^\alpha v|\,dx \le \f{C_b\,\ve}{\sqrt{1+t}}\,E(t).
\end{equation}

\smallskip

\textit{(B) \ Terms $\ds\int_{\mathbb
    R^2}\Bigl|Z^\beta\bigl((d_{ij}u+\ds\sum_{k=0}^2e_{ij}^k\p_ku+O(|u|^2+|\nabla
  u|^2))\p_{ij} v\bigr)\cdot\p_tZ^\al v\Bigr|\,dx$ with
  $|\beta|<|\alpha|$.}

We only need to treat the term $\int_{\mathbb R^2}\Bigl|\bigl(
d_{ij}u+\ds\sum_{k=0}^2e_{ij}^k\p_ku\bigr)Z^\beta\p_{ij}
v\cdot\p_tZ^\al v\Bigr|\,dx$, since the other terms have been estimated in
(A). By \eqref{2-14}, one has
\begin{multline}\label{2-25}
\int_{{\mathbb
    R}^2}\Bigl|\bigl(d_{ij}u+\sum_{k=0}^2e_{ij}^k\p_ku\bigr)
Z^\beta\p_{ij} v\cdot\p_tZ^\al v\Bigr|\,dx \\
\leq C\sum_{|\g|\leq|\beta|+1\leq|\al|}\int_{{\mathbb
    R}^2}|(1+|t-r|)^{-1}u| \cdot|Z^\g\p v|\cdot|\p_tZ^\al v|\,dx\\
+C\int_{{\mathbb R}^2}|\p u|\cdot|Z^\beta\p^2
v|\cdot|\p_tZ^\al v|\,dx \leq\f{C_{b}\,\ve}{\sqrt{1+t}}\,E(t).
\end{multline}

\smallskip

\textit{(C) \ Terms $\ds\int_{{\mathbb R}^2}|\p_t Z^\al
  v|\cdot|Z^\beta J_a|\, dx$ with $|\beta|\leq|\al|\leq 4$.}

In this case, one has
\begin{equation}\label{2-26}
\int_{{\mathbb R}^2}|\p_t Z^\al v|\cdot|Z^\beta J_a|\,dx
\leq\|Z^\beta J_a(\cdot,t)\|_{L^2}\sqrt{E(t)}.
\end{equation}

\smallskip

\textit{(D) \ Terms $\ds\int_{{\mathbb
      R}^2}\Bigl|Z^\beta\bigl(\bigl(d_{ij}v
  +\ds\sum_{k=0}^2e_{ij}^k\p_kv+O\left(|v|^2+2|u_av|+|\nabla
  v|^2+2|\nabla u_a\cdot\nabla v|\right)\bigr)$ $\p_{ij}
  u_a\bigr)\Bigr|\cdot|\p_t Z^\al v|\, dx$ with $|\beta|\leq|\al|\leq
  4$.}

A direct computation yields
\begin{multline}\label{2-27}
\int_{{\mathbb R}^2}\Bigl|Z^\beta\bigl(\bigl(d_{ij}v +\sum_{k=0}^2e_{ij}^k\p_kv
 +O\left(|v|^2+2|u_av|+|\nabla v|^2
+2|\nabla u_a\cdot\nabla v|\right)\bigr)\p_{ij} u_a\bigr)\Bigr| \\
\cdot |\p_t Z^\al v|\,dx
\leq C\sum_{\substack{|\beta_1|+|\beta_2|\leq|\beta|+1\\|\beta_1|\leq|\beta
|}}\int_{{\mathbb R}^2}\big(|(1+|t-r|)^{-1}(Z^{\beta_1}v)(Z^{\beta_2}\p u_a)| \\
+|(Z^{\beta_1}\p v)(Z^{\beta_2}\p u_a)|\big)|\p_t Z^\al v|\,dx
\leq\f{C_{b}\,\ve}{\sqrt{1+t}}\,E(t).
\end{multline}

\smallskip

\textit{(E) \ Terms $\ds\int_{{\mathbb R}^2}\Bigl|\bigl(d_{ij}v
+\ds\sum_{k=0}^2e_{ij}^k\p_kv+O\left(|u|^2+|\nabla
u|^2\right)\bigr)[Z^\al,\p_{ij}]v\Bigr|\cdot|\p_t Z^\al v|\,dx$.}

Since $[Z^\al,\p_{ij}]=\ds\sum_{|\beta|\leq|\al|-1}
C_{\al\beta}^{ij}\p^2Z^\beta$, one has
\begin{multline}\label{2-28}
\int_{{\mathbb R}^2}\Bigl|\bigl(d_{ij}u
+\sum_{k=0}^2e_{ij}^k\p_ku+O\left(|u|^2+|\nabla u|^2\right)\bigr)
[Z^\al,\p_{ij}]v\Bigr|\cdot|\p_t Z^\al v|\,dx\\
\begin{aligned}
& \leq C\sum_{|\beta|\leq|\al|-1}\int_{{\mathbb R}^2}(|u|+|\p u|)|\p^2Z^\beta v|
\cdot|\p_t Z^\al v|\,dx\\
& \leq C\sum_{|\g|\leq|\al|}\int_{{\mathbb R}^2}|(1+|t-r|)^{-1}
(|u|+|\p u|)|\cdot|\p Z^\g v|\cdot|\p_tZ^\al v|\,dx\\
& \leq\f{C_{b}\,\ve}{\sqrt{1+t}}\,E(t).
\end{aligned}
\end{multline}

Substituting \eqref{2-24}--\eqref{2-28} into \eqref{2-19} yields
\begin{equation*}
E'(t)\leq\f{C_{b}\,\ve}{\sqrt{1+t}}\,E(t)
+\sum_{|\beta|\leq 4}\|Z^\beta J_a(\cdot,t)\|_{L^2}\sqrt{E(t)}.
\end{equation*}

By Lemma~\ref{lem2-4} and Gronwall's inequality, one obtains
\[
\|\p Z^\al v\|_{L^2}\leq C_{b}\,\ve^{3/2},\quad|\al|\leq 4,
\]
and then
\begin{equation}\label{2-30}
\|Z^\al\p v\|_{L^2}\leq C_{b}\,\ve^{3/2},\quad|\al|\leq 4.
\end{equation}
By \eqref{2-30} and the Klainerman-Sobolev inequality (see \cite{10,
  16}), one has
\begin{equation}\label{2-31}
|Z^\al\p v|\leq
C_{b}\,\ve^{3/2}(1+t)^{-1/2}(1+|t-r|)^{-1/2},\quad
|\alpha|\leq 2,\ t\leq T,
\end{equation}
which means that, for small $\ve>0$,
\[
|Z^\alpha\p
v|\leq\f{\ve}{2}\,(1+t)^{-1/2}(1+|t-r|)^{-1/2},\quad
|\alpha|\leq 2, \ t\leq T.
\]
In this way, we have completed the proofs of \eqref{2-16} and further
of \eqref{2-12} along with \eqref{2-31}.  
\end{proof}

Proposition~\ref{prop2-6} immediately gives that
 $\ds\varliminf_{\ve\rightarrow 0}\ve(1+
T_\ve)^{1/2}\geq\tau _0$, hence
\begin{equation}\label{2-32}
\ds\varliminf_{\ve\rightarrow 0}\ve\sqrt{ T_\ve}\geq\tau _0.
\end{equation}


\section{Proof of the Geometric Blowup Theorem}

In this section, we use the coordinates $(r, \th, t)$ instead of $(x,
t)$ to study Eq.~\eqref{1-1}.

We set
\[
\si=r-t, \quad \tau=\ve\sqrt{t},
\]
and write $u(t,x)=\ds\f{\ve}{\sqrt r}\,G(\si,\th,\tau)$
for $r>0$. Further we introduce the notation
\begin{align*}
&\widetilde\o=(0,-\sin\th,\cos\th),\quad\bar\o=(0,\cos\th,\sin\th), \\
&\hat\o=(-1,\cos\th,\sin\th),\quad R=\tau^2+\ve^2\si,\\
&a_{ij}=d_{ij}G+\sum_{k=0}^2e_{ij}^k\biggl(\hat{\o}_k\p_\si
G+\ve^2\Bigl(-\f{\bar{\o}_k}
{2R}G+\f{1}{R}\widetilde{\o}_k\p_\th G+\f{1}{2\tau}\delta_0^k\p_\tau G
\Bigr)\biggr)\\
&\qquad
+O\biggl(\f{\ve^2}{R^{1/2}}G^2+\sum_{k=0}^2\f{\ve^2}{R^{1/2}}(-\f{\ve^2}
{2R}\bar{\o}_kG+\hat{\o}_k\p_\si G+\f{\ve^2}{R}\widetilde{\o}_k\p_\th G
+\f{\ve^2}{2\tau}\delta_0^k\p_\tau G)^2\biggr).
\end{align*}
It then follows from a direct computation that Eq.~\eqref{1-1} takes
the form
\begin{equation}\label{3-1}
P(G)\equiv\sum_{i,j=0}^2p_{ij}(y,G,\nabla_y
G)\p_{ij}G+\ve^2p_0(y,G,\nabla_y G)=0
\end{equation}
where
\begin{align*}
&p_{00}=\sum_{i,j=0}^2a_{ij}\hat\o_i\hat\o_j,\quad
p_{01}=p_{10}=\f{\ve^2}{2R}\sum_{i,j=0}^2a_{ij}(\widetilde\o_i\hat\o_j
+\widetilde\o_j\hat\o_i),\\
&p_{02}=p_{20}=\f{\ve^2}{4\tau}\sum_{i,j=0}^2a_{ij}
(\hat\o_j\delta_0^i+\hat\o_i\delta_0^j)-\f{R^{1/2}}{2\tau},\quad
p_{11}=-\f{\ve^2}{R^{3/2}}+\f{\ve^4}{R^2}\sum_{i,j=0}^2a_{ij}\widetilde\o_i
\widetilde\o_j,\\
&p_{12}=p_{21}=\f{\ve^4}{4R\tau}\sum_{i,j=0}^2a_{ij}
(\widetilde\o_j\delta_0^i+\widetilde\o_i\delta_0^j),\quad
p_{22}=\f{\ve^2R^{1/2}}{4\tau^2}+\f{\ve^4}{4\tau^2}\sum_{i,j=0}^2a_{ij}
\delta_0^i\delta_0^j
\end{align*}
with $y=(y_0,y_1,y_2)=(\si,\th,\tau)$, $\p_i=\p_{y_i}$ $(0\le i\le 2)$,
and $p_0$ is a smooth function.

Introduce a transformation $\Phi$,
\begin{equation}\label{3-2}
\Phi(s,\th,\tau)= \left(\si,\th,\tau\right).
\end{equation}
where $\si=\phi(s,\th,\tau)$. Set
$w(s,\th,\tau)=G(\phi(s,\th,\tau),\th,\tau)$ and $v(s,\th,\tau)=\p_\si
G(\phi(s,\th,\tau),\th,\tau)$. It is obvious that
$\p_sw=v\p_s\phi$. Now, if we find smooth functions $(\phi, w, v)$ and
some point $m_{\ve}=(s_{\ve}, \th_{\ve}, \tau_{\ve})$ such that
$\p_s\phi(m_{\ve})=0$ and $\p_sv(m_{\ve})\neq 0$, then the second
order derivatives of $G$ has a singularity at $M_{\ve}=(\phi(s_{\ve},
\th_{\ve}, \tau_{\ve}), \th_{\ve}, \tau_{\ve})$,
$\p_\si^2G=\ds\f{\p_sv}{\p_s\phi}$. In \cite{1, 2}, such a blowup is
said to be of geometric type.

The following proposition is established in \cite{2} by direct
computation. It guides us to the construction of the blowup system of
\eqref{1-1}.

\begin{proposition}\label{prop3-1}
With $\bar{\p}=(0,\p_1,\p_2)$ and
$\hat\phi=(\hat\phi_0,\hat\phi_1,\hat\phi_2) =(-1,\p_1\phi,\p_2\phi)$,
one has
\begin{align*}
&(\nabla_y G)(\Phi)=\bar{\p}w-\hat\phi v,\\
&(\p_{ij}G)(\Phi)=\bar{\p}_{ij}w-v\bar{\p}_{ij}\phi
-\left(\hat\phi_i\bar{\p}_jv+\hat\phi_j\bar{\p}_iv\right)
+\hat\phi_i\hat\phi_j\bigl(\f{\p_sv}{\p_s\phi}\bigr),\\
&P(G)(\Phi)=\f{\p_sv}{\p_s\phi} \, I_1+I_2,
\end{align*}
where
\begin{align*}
I_1&=\sum p_{ij}(\phi,\th,\tau,w,\bar{\p}w-\hat\phi v)\hat\phi_i\hat\phi_j,\\
I_2&=\sum p_{ij}(\phi,\th,\tau,w,\bar{\p}w-\hat\phi v)
\bigl(\bar{\p}_{ij} w-v\bar{\p}_{ij}\phi
-(\hat\phi_i\bar{\p}_jv+\hat\phi_j\bar{\p}_iv)\bigr) \\
& \qquad +\ve^2p_0(\phi,\th,\tau,w,\bar{\p}w-\hat\phi v).
\end{align*}
\end{proposition}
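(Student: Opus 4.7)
The proposition is entirely a chain-rule computation, once one keeps track carefully of the sign/index conventions $\bar{\p}_0=0$ and $\hat\phi_0=-1$. I would carry it out in three steps.

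\textbf{Step 1 (gradient formula).} Differentiate $w(s,\th,\tau)=G(\phi(s,\th,\tau),\th,\tau)$ with respect to $y_j=\th$ and $y_j=\tau$: $\p_jw=(\p_jG)(\Phi)+(\p_\si G)(\Phi)\p_j\phi$ for $j=1,2$, which gives $(\p_jG)(\Phi)=\bar{\p}_jw-\hat\phi_jv$. Combined with the defining relation $(\p_\si G)(\Phi)=v=\bar{\p}_0w-\hat\phi_0v$ (using $\hat\phi_0=-1$), this produces $(\na_yG)(\Phi)=\bar{\p}w-\hat\phi v$ uniformly across the three components.

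\textbf{Step 2 (Hessian formula).} Differentiate a second time. Applying $\p_s$ to the definition $v(s,\th,\tau)=(\p_\si G)(\phi,\th,\tau)$ yields the key identity $(\p_\si^2G)(\Phi)=\p_sv/\p_s\phi$; this is the only potentially singular quantity, since $\p_s\phi$ is expected to vanish at the geometric blowup point. To obtain $(\p_{0j}G)(\Phi)$ for $j\ge1$, differentiate $v=(\p_\si G)(\phi,\th,\tau)$ in $y_j$ and solve, obtaining $(\p_{0j}G)(\Phi)=\bar{\p}_jv-(\p_sv/\p_s\phi)\p_j\phi$. Finally, for $i,j\ge1$, differentiate the identity $(\p_iG)(\Phi)=\bar{\p}_iw-\hat\phi_iv$ already established in Step~1 in the variable $y_j$ and eliminate the occurring $(\p_{0i}G)(\Phi)$ using the formula just obtained. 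Collecting terms, the result assumes the symmetric closed form
\[
(\p_{ij}G)(\Phi)=\bar{\p}_{ij}w-v\bar{\p}_{ij}\phi-\bigl(\hat\phi_i\bar{\p}_jv+\hat\phi_j\bar{\p}_iv\bigr)+\hat\phi_i\hat\phi_j\,\f{\p_sv}{\p_s\phi},
\]
which, upon inspection, correctly specializes in the cases $i=0$ and/or $j=0$ thanks to $\bar{\p}_0=0$ and $\hat\phi_0=-1$.

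\textbf{Step 3 (formula for $P(G)(\Phi)$).} Substitute the preceding expressions into $P(G)=\ds\sum_{i,j=0}^2p_{ij}\p_{ij}G+\ve^2p_0$ evaluated at $\Phi$. By Step~1, the arguments $(y,G,\na_yG)$ of $p_{ij}$ and $p_0$ become $(\phi,\th,\tau,w,\bar{\p}w-\hat\phi v)$, matching the argument list appearing in the definitions of $I_1$ and $I_2$. Isolating the single term carrying the factor $\p_sv/\p_s\phi$ gives the coefficient $\ds\sum p_{ij}\hat\phi_i\hat\phi_j=I_1$, while the remaining regular contributions (including $\ve^2p_0$) assemble into $I_2$. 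This yields the third identity.

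The main obstacle is not technical but notational: one must handle the $i=0$ versus $i\ge 1$ conventions carefully and verify the symmetry of the Hessian formula in $i,j$. The conceptually important observation is that the potentially singular factor $\p_sv/\p_s\phi$ multiplies precisely the principal symbol $\ds\sum p_{ij}\hat\phi_i\hat\phi_j$ of \eqref{3-1} evaluated in the codirection $\hat\phi$. This is what ties the ``geometric blowup'' ansatz to the characteristic/eikonal-type condition $I_1=0$ that will eventually govern the choice of $\phi$ when the blowup system is solved.
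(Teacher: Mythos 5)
Your chain-rule computation is correct, and it is exactly the ``direct computation'' the paper has in mind: the paper itself gives no proof, merely citing \cite{2}, and your three steps (gradient via $\hat\phi_0=-1$, $\bar\p_0=0$; Hessian via $(\p_\si^2G)(\Phi)=\p_sv/\p_s\phi$ and elimination of the mixed term; substitution into $P$) reproduce that verification, including the correct specialization when $i=0$ or $j=0$. Nothing is missing.
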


Using this proposition, one sees that, in order to solve the nonlinear
equation $P(G)=0$, it suffices to solve the system
\begin{equation}\label{3-3}
\left\{ \enspace
\begin{aligned}
& I_1=0,\\
& I_2=0,\\
& I_3=\p_sw-v\p_sv=0
\end{aligned}
\right.
\end{equation}
 for $(\phi, w, v)$. This system is called the blowup system of
 \eqref{1-1} in the terminology used in \cite{1, 2}.

We now comment on the existence of local solution of \eqref{3-3}.

From the analysis of Section~2, we know that $P(G)=0$ can be solved
with the corresponding initial data on $t=\tau_1^2/\ve^2$ (since
\eqref{1-1} has a unique smooth solution for $\tau<\tau_0$ and small
$\ve$) in the strip
\[
D_S=\{(\si,\th,\tau)\colon -C_0\le \si\le M,\,
\th_0-\dl_0\le\th\le\th_0+\dl_0,\, \tau_1\le\tau\le \tau_1+\eta\},
\]
where $C_0>0$ is some large constant, $\tau_1>0$, and $\eta>0$
is so small that $\eta<\tau_0-\tau_1$.

From $I_1=\ds\sum p_{ij}(\phi,\th,\tau,G(\phi,\th,\tau),\nabla
G(\phi,\th,\tau))\hat\phi_i\hat\phi_j=0$, one has that $\ds\f{\p
  I_1}{\p(\p_\tau\phi)}=\f{R^{1/2}}{\tau}+O(\ve^2)>0$ for $\ve>0$
small and a smooth function $\phi$. By the implicit function theorem,
one then obtains
\begin{equation}\label{3-4}
\p_\tau\phi=E(\ve,\th,\tau,\phi,\p_{\th}\phi),
\end{equation}
where $E$ is a smooth function of its arguments.

With initial data $\phi(s,\th,\tau_1)=s$, \eqref{3-4} has a unique
solution $\bar{\phi}$ for $\eta>0$ sufficient small. Set
\[
\bar{w}=G(\bar\phi,\th,\tau),\quad\bar{v}=\p_\si G(\bar\phi,\th,\tau).
\]
One has that $(\bar\phi,\bar{w},\bar{v})$ is a local solution of the
blowup system \eqref{3-3}, as the local existence of $G$ is known by
\eqref{2-32}. Moreover, from the uniqueness result for the solution
$u(t,x)$ of \eqref{1-1} for $t\in [0, (\tau_1+\eta)^2/\ve^2]$, one has
that $\bar{v}$ and $\overline{\phi}-s$ are smooth and flat on
$\{s=M\}$.

In order to solve the blowup system \eqref{3-3}, as in \cite{1,2}, we
shall use the Nash-Moser-H\"ormander iteration technique under
hypothesis \eqref{H}. This construction is split into five steps.


\subsection{Structure of the linearization of the blowup system}

Let $(\dot\phi, \dot w, \dot v)$ be the unknown solution of the
linearization of the blowup system \eqref{3-3}. As in
\cite[Thm.~3]{2}, set $\dot z=\dot w- v\dot\phi$. Recall that
$\ds\sum_{i,j,k=0}^2e_{ij}^k\p_ku\p_{ij}u$ does not satisfy the null
condition. It then follows by direct computation that the
linearization of system \eqref{3-3} can be changed into the system
\begin{align}
\mathcal L_1(\dot\phi, \dot z)&\equiv Z_1\p_s\dot z-\ve^2(\p_s\phi) Q\dot
z+a_0 \p_s\dot z+\ve^2a_1\p\dot z+a_2\dot z+b_1Z_1\dot\phi+b_2\dot\phi
=\dot f_1,\label{3-5}\\
\mathcal L_2(\dot\phi, \dot z)&\equiv
Z_1^2\dot\phi+a_3Z_1\dot\phi+a_4\dot\phi+\ve^2c_0Q\dot
z \notag \\
&\qquad +\ve^2Z_1(a_5\p_\tau+a_6\p_\th)\dot z+\ve^2a_7\p\dot z+a_8Z_1\dot
z+\ve^2a_9\dot z=\dot f_2,\label{3-6}
\end{align}
where
\begin{align*}
Z_1&=\sum p_{ij}(\hat\phi_i\bar\p_j+\hat\phi_j\bar\p_i)=(1+O(\ve))
(\p_\tau+O(\ve^2)\p_\th),\\
Q&=\ve^{-2}\sum p_{ij}\bar\p_{ij}=(-1+O(\ve^2))\p_\th^2+O(\ve^2)\p_{\th\tau}
+\bigl(\f{1}{4\tau}+O(\ve)\bigr)\p_\tau^2,\\
b_1&=\p_sv+\bigl(\sum_{i,j,k}e_{ij}^k\hat\o_i\hat\o_j\hat\o_k\bigr)^{-1}
(Z_1\p_s\phi+\sum_{i,j}d_{ij}\hat\o_i\hat\o_jv\p_s\phi)+O(\ve^{1/2}),\\
b_2&=Z_1\p_sv+b_1\sum_{i,j}d_{ij}\hat\o_i\hat\o_jv+O(\ve^{1/2}),\\
c_0&=\sum_{i,j,k}e_{ij}^k\hat\o_i\hat\o_j\hat\o_k+O(\ve^2),
\end{align*}
and the $a_i$ ($0\le i\le 9$) are smooth functions.

On the other hand, $\dot v$ is determined from the first equation
$I_1'(\dot\phi, \dot w, \dot v)=\dot f_3$ in the linearization of the
blowup system \eqref{3-3}, see \cite[Proposition~II.2]{2}.

To obtain a weighted energy estimate for \eqref{3-5}-\eqref{3-6}, we
choose a ``nearly horizontal" surface $\Sigma$ passing through
$\{\tau=\tau_1,\,s=M\}$, as in \cite{2}, in place of the initial plane
$\{\tau=\tau_1\}$, where $\Sigma$ is a characteristic surface for the
operator $Z_1\p_s-\ve^2\p_s{\bar\phi}Q$ with its coefficients computed
using $(\bar{\phi},\bar{v},\bar{w})$. Note that if the characteristic
surface $\Sigma$ is defined by the equation $\tau=\psi(s,\th)+\tau_1$,
then $\psi$ fulfills
\begin{equation}\label{3-7}
\left\{ \enspace
\begin{aligned}
& \biggl(-1+O(\ve^2)\p_{\th}\psi\biggr)\p_s\psi \\
& \qquad -\ve^2\p_s{\bar\phi}\biggl(\ds\f{1}
{4\tau}+O(\ve)-O(\ve^2)\p_\th\psi+(-1+O(\ve^2))(\p_{\th}\psi)^2\biggr)=0,\\
& \psi(M,\th)=0.
\end{aligned}
\right.
\end{equation}
It is readily seen that, for $\ve>0$ small, Eq.~\eqref{3-7} has a
smooth solution $\psi(s,\th)$ in the domain $D_S$.

Choose a cut-off function $\chi\in C^\infty({\mathbb R})$ with
$\chi(t)=1$ for $t\leq 1/2$ and $\chi(t)=0$ for $t\geq 1$ and perform
the change of variables
\begin{equation}\label{3-8}
X=s,\quad Y=\th,\quad T=\tau-\tau_1-\psi(s,\th)
\chi\biggl(\f{\tau-\tau_1}{\eta}\biggr).
\end{equation}
We will then work in the domain
\[
D_1=\left\{(X, Y, T)\colon -C_0\le X\le M, \, \th_0-\dl_0\le
Y\le\th_0+\dl_0, \, 0\le T\le \tau_\ve-\tau_1\right\},
\]
which is still unknown, because we do not know yet the precise value
of $\tau_{\ve}$. By \eqref{3-8}, the characteristic surface $\Sigma$
becomes $\{T=0\}$.


\subsection{Construction of an approximate solution of (3.3)}

In a first step of the Nash-Moser-H\"or\-mander iteration method, one is
required to construct an approximate solution $(\phi_a, w_a, v_a)$ of
\eqref{3-3} such that, at some point, $\phi_a$ satisfies \eqref{H}.

For $\ve=0$, the blowup system \eqref{3-3} becomes
\begin{equation}\label{3-9}
\left\{ \enspace
\begin{aligned}
& \p_T\phi+\ds\sum_{i,j}\biggl(d_{ij}w+\sum_ke_{ij}^k\hat\o_kv\biggr)
\hat\o_i\hat\o_j=0,\\
& -\p_Tv=0,\\
& \p_Xw-v\p_X\phi=0
\end{aligned}
\right.
\end{equation}
with the initial and boundary conditions
\[
\begin{aligned}
&\phi(X,Y,0)=X,\quad
\phi(M,Y,T)=M,\\
&v(X,Y,0)=\p_\si F_0(\bar s(X,Y,\tau_1),Y),\quad v(M,Y,T)=0,\\
\end{aligned}
\]
and
$\bar s(X,Y,\tau_1)$ being given by
\[
X=M+\ds\int_M^{\bar
s}\biggl\{\exp\bigl(-F_1(\rho,Y)\tau_1\bigr)
\biggl(1+\ds\f{F_2(\rho,Y)}{F_1(\rho,Y)}\biggr)
-\f{F_2(\rho,Y)}{F_1(\rho,Y)}\biggr\}\,d\rho.
\]
An exact solution of \eqref{3-9} is
\[
\left\{ \enspace
\begin{aligned}
& \bar\phi_0=M+\ds\int_M^{\bar
s(X,Y,\tau_1)}\biggl\{\exp\bigl(-(T+\tau_1)F_1(\rho,Y)\bigr)
\biggl(1+\ds\f{F_2(\rho,Y)}{F_1(\rho,Y)}\biggr)
-\f{F_2(\rho,Y)}{F_1(\rho,Y)}\biggr\}\,d\rho,\\
& \bar v_0=\p_\si F_0(\bar s(X,Y,\tau_1),Y),\\
& \bar w_0=\ds\int_M^{\bar s(X,Y,\tau_1)}\p_\si
F_0(\rho,Y)\biggl\{\exp\bigl(-(T+\tau_1)F_1(\rho,Y)\bigr)\\
& \qquad\qquad
\biggl(1+\ds\f{F_2(\rho,Y)}{F_1(\rho,Y)}\biggr)
-\f{F_2(\rho,Y)}{F_1(\rho,Y)}\biggr\}\,d\rho.
\end{aligned}
\right.
\]

Note that \eqref{3-3} has a local solution $(\bar{\phi},\bar w,\bar
v)$ for $0\le T\le \eta$ whose existence is guaranteed by previous
statements. We now glue $(\bar{\phi},\bar w,\bar v)$ to
$(\bar{\phi}_0,\bar w_0,\bar v_0)$ to obtain an approximate solution
of \eqref{3-3}
\begin{equation}\label{3-10}
\left\{ \enspace
\begin{aligned}
&\phi_a(X,Y,T)=\chi(T/\eta)\bar{\phi}(X,Y,T)
+\bigl(1-\chi(T/\eta)\bigr)\bar{\phi}_0(X,Y,T),\\
&v_a(X,Y,T)=\chi(T/\eta)\bar{v}(X,Y,T)
+\bigl(1-\chi(T/\eta)\bigr)\bar{v}_0(X,Y,T),\\
&w_a(X,Y,T)=\chi(T/\eta)\bar{w}(X,Y,T)
+\bigl(1-\chi(T/\eta)\bigr)\bar{w}_0(X,Y,T).
\end{aligned}
\right.
\end{equation}

Substituting the approximate solution $(\phi_a, w_a, v_a)$ into the
blowup system \eqref{3-3} yields
\[
  I_i=f_a^i, \quad 1\leq i\leq 3,
\]
where $f_a^i$ is smooth, flat on $\{X=M\}$ and zero near $\{T=0\}$. In
addition, under assumption (ND), one can show that $\phi_a$
satisfies \eqref{H} at the point $(\bar\si,\th_0,\tau_0-\tau_1)$, where
\[
  \bar\si=M+\ds\int_M^{\si_0}\biggl(\exp\bigl(-F_1(\rho,\th_0)\tau_1\bigr)
  \Bigl(1+\f{F_2(\rho,\th_0)}{F_1(\rho,\th_0)}\Bigr)
 -\f{F_2(\rho,\th_0)}{F_1(\rho,\th_0)}\biggr)\,d\rho.
\]
This will be our next step.


\subsection{Condition (H) for the approximate solution \boldmath $\phi_a$}

We now prove that the approximate solution $\phi_a$ given by
\eqref{3-10} satisfies condition \eqref{H} at the point
$(\bar\si,\th_0,\tau_0-\tau_1)$.

It follows from a direct computation that
\[
\p_X\bar\phi_0(X,Y,T)=\Biggl(\ds\f{\ds\exp(-(T+\tau_1)F_1)
\biggl(1+\f{F_2}{F_1}\biggr)-\f{F_2}{F_1}}
{\ds\exp(-\tau_1F_1)\biggl(1+\f{F_2}{F_1}\biggr)-\f{F_2}{F_1}}\Biggr)
(\bar s(X,\tau_1,Y),Y).
\]
Recall that $(\si_0,\th_0)$ is the interior minimum point of
$G_0(\si,\th)$. Thus one has that $\ds\nabla_{\si,\th}G_0(\si_0,\th_0)
=0$. Then $\nabla_{s,\th}\p_sq(\th_0,\tau_0,\si_0)=\ds
F_2(\si_0,\th_0)\nabla_{\si,\th}G_0(\si_0,\th_0)=0$ which follows from
the expression for $q(\th,\tau,s)$ in Lemma~\ref{lem2-3}. With
$\p_sq(\th_0,\tau_0,\si_0)=0$, this yields
\[
\nabla_{X,Y}(\p_X\bar\phi_0)(\bar\si,\th_0,\tau_0-\tau_1)=0.
\]
A direct calculation shows that
$\na_{s,\th}^2\p_sq(\th_0,\tau_0,\si_0)=\ds
F_2(\si_0,\th_0)\na_{\si,\th}^2 G_0(\si_0,\th_0)$ is a positive
definite matrix. On the other hand, at the point
$M_0=(\bar\si,\th_0,\tau_0-\tau_1)$, one has
\begin{align*}
&\p_X^2\p_X\bar\phi_0(M_0) =\f{\p_s^2\p_sq(\th_0,\tau_0,\si_0)
(\bar s_X)^2(\si_0,\tau_1,\th_0)}{\Bigl(\exp\bigl(-\tau_1F_1\bigr)
\bigl(1+\f{F_2}{F_1}\bigr)-
\f{F_2}{F_1}\Bigr)(\si_0,\th_0)},\\
&\p_{XY}\p_X\bar\phi_0(M_0)\\
&\hspace{10mm}=\f{\p_s^2\p_sq(\th_0,\tau_0,\si_0)
(\bar s_X\bar s_Y)(\si_0,\tau_1,\th_0)+\p_{s\th}^2\p_sq(\th_0,\tau_0,\si_0)
\bar s_X(\si_0,\tau_1,\th_0)}{\Bigl(\exp\bigl(-\tau_1F_1\bigr)
\bigl(1+\f{F_2}{F_1}\bigr)-
\f{F_2}{F_1}\Bigr)(\si_0,\th_0)},\\
&\p_Y^2\p_X\bar\phi_0(M_0)\\
&\hspace{10mm}=\f{\p_s^2\p_sq(\th_0,\tau_0,\si_0)
(\bar s_Y)^2(\si_0,\tau_1,\th_0)+2\p_{s\th}\p_sq(\th_0,\tau_0,\si_0)
\bar s_Y(\si_0,\tau_1,\th_0)}{\Bigl(\exp\bigl(-\tau_1F_1\bigr)
\bigl(1+\f{F_2}{F_1}\bigr)-
\f{F_2}{F_1}\Bigr)(\si_0,\th_0)}\\
&\qquad\qquad +\f{\p_\th^2\p_sq(\th_0,\tau_0,\si_0)}{\Bigl(\exp\bigl(-\tau_1F_1\bigr)
\bigl(1+\f{F_2}{F_1}\bigr)-
\f{F_2}{F_1}\Bigr)(\si_0,\th_0)}
\end{align*}
Therefore, $\na_{X,Y}^2\p_X\bar\phi_0(M_0)$ is a positive definite matrix.

\smallskip

Next we verify condition \eqref{H} for the function $\phi_a$.

\smallskip

(i) \ Because of $\p_X\bar{\phi}(X,Y,0)=1$, one can assume that
$\p_X\bar{\phi}(X,Y,T)>0$ for $T\leq\eta$. In addition, it follows
from the expression for $\p_X\bar{\phi}_0(X,Y,T)$ that
$\p_X\bar\phi_0(X,Y,T)\geq 0$ and $\p_X\bar\phi_0(X,Y,T)=0$ if and
only if $(X,Y,T)=(\bar\si,\th_0,\tau_0-\tau_1)$.  Therefore,
$\p_X\phi_a(X,Y,T)\geq 0$ holds. Moreover, $\p_X\phi_a(X,Y,T)=0$ if
and only if $T\geq\eta$ and $\p_X\bar{\phi}_0(X,Y,T)=0$. This means
that
\[
\p_X\phi_a(X,Y,T)=0 \enspace \Longleftrightarrow \enspace (X,Y,T)
=(\bar\si,\th_0,\tau_0-\tau_1).
\]

\smallskip

(ii) \ Since $\eta<\tau_0-\tau_1$, $\phi_a(X,Y,T)=
\bar\phi_0(X,Y,T)$ holds in a neighborhood of
$(\bar\si,\th_0,\tau_0-\tau_1)$. Thus,
\begin{align*}
&\p_T\p_X\phi_a(\bar\si,\th_0,\tau_0-\tau_1)=
\p_T\p_X\bar\phi_0(\bar\si,\th_0,\tau_0-\tau_1)<0,\\
&\na_{X,Y}^2\phi_a(\bar\si,\th_0,\tau_0-\tau_1)=
\na_{X,Y}^2\bar\phi_0(\bar\si,\th_0,\tau_0-\tau_1)\enspace
\text{is positive definite}.
\end{align*}


\subsection{Reduction to a Goursat problem in a fixed domain}

To be free to adjust the height of the domain $D_1$, we perform a
change of variables depending on a small nonnegative parameter $\la$,
\begin{equation}\label{3-11}
X=x,\quad Y=y,\quad T=T(\rho,\la)=(\tau_0-\tau_1)
(\rho+\la\rho(1-\chi_1(\rho))),
\end{equation}
where $\chi_1(\rho)$ equals $1$ for $\rho$ near $0$ and $0$ for $\rho$
near $1$. We will then work in a fixed subdomain $D_2$ of $D_1$,
\[
D_2=\{(x,y,\rho)\colon -C_0\le x\le M, \, \th_0-\dl_0\le y\le\th_0+\dl_0, \,
0\le\rho\le 1\}.
\]
For $\la=\la_0=0$, the approximate solution of \eqref{3-3} is
\begin{equation*}
\left\{ \enspace
\begin{aligned}
\phi_0(x,y,\rho)&=\phi_a(x,y,(\tau_0-\tau_1)\rho),\\
v_0(x,y,\rho)&=v_a(x,y,(\tau_0-\tau_1)\rho),\\
w_0(x,y,\rho)&=w_a(x,y,(\tau_0-\tau_1)\rho).
\end{aligned}
\right.
\end{equation*}
Moreover, $\phi_0$ satisfies condition \eqref{H} in $D_2$ at the point
$(\bar\si, \th_0, 1)$.

On the characteristic surfaces $\{x=M\}$ and $\{\rho=0\}$,
respectively, of Eq.~\eqref{3-3}, we impose the boundary
conditions
\begin{equation*}
\text{$\phi$ is flat on $\{x=M\}$  and $\phi-\phi_0$ is flat on
$\{\rho=0\}$.}
\end{equation*}

We now turn our attention to the linearized equations \eqref{3-5} and
\eqref{3-6}. Under the changes of variables \eqref{3-8} and
\eqref{3-11}, it follows from a direct computation that \eqref{3-5}
and \eqref{3-6} assume the form
\begin{align}
&ZS\dot z-\ve^2(S\phi) N\dot z+\al_1 S\dot z+\ve^2l_1(\na \dot z)
+\al_2\dot z+\beta_1Z\dot\phi+\beta_2\dot\phi
=\dot F_1,\label{3-14}\\
&Z^2\dot\phi+\al_3Z\dot\phi+\al_4\dot\phi+\ve^2\g_0N\dot z \notag \\
& \qquad +\ve^2Z(\al_5Z+\al_6\p_y)\dot z
+\ve^2l_2(\na\dot z)+\al_7Z\dot z+\al_8\dot z=\dot F_2,\label{3-15}
\end{align}
where
\begin{align*}
&Z=\p_\rho+\ve^2z_0\p_y,\quad S=\p_s=\p_x+\ve^2\p_\rho,\quad
N=N_1Z^2+2\ve^2N_2Z\p_y+N_3\p_y^2,\\
&\beta_1=b_1, \quad \beta_2=(\p_\rho
T)b_2+O(\ve),\quad\g_0=(\p_\rho T)c_0+O(\ve),
\end{align*}
$N_1 =\ds\f{1}{4\tau\p_\rho T}+O(\ve)>0$ and $N_3=-\p_\rho
T+O(\ve)<0$; $l_1(\na\dot z)$ and $l_2(\na\dot z)$ are linear
combinations of $\nabla\dot z$.

We specifically point out that although \eqref{3-14} and \eqref{3-15}
are somehow similar to the linearized equations $(3.1.1_a)$ and
$(3.1.1_b)$ of \cite{2}, the coefficients $\al_1$ and $\al_2$ in
\eqref{3-14} are only bounded quantities different from the ones in
$(3.1.1_a)$ of \cite{2} which are of order $O(\ve^2)$. In addition,
there are more terms in \eqref{3-15} than in $(3.1.1_b)$ of \cite{2}
due to the simultaneous appearance of the solution $u$ and its
first-order derivatives $\na u$ in the coefficients of \eqref{1-1}. In
view of the differences between \eqref{3-14}--\eqref{3-15} and
$(3.1.1_a)-(3.1.1_b)$ of \cite{2}, we will derive energy estimates on
the solutions of \eqref{3-14}--\eqref{3-15} directly by choosing
suitable multipliers and then integrating by parts different from
changing the main part of \eqref{3-14} into a third-order scalar
equation to derive related estimates by introducing a new unknown
function $\dot k$ with $\dot z=Z\dot k$ as in \cite{1,2}.

In the process of solving \eqref{3-14}--\eqref{3-15} we are required
to choose a subdomain $D_3$ of $D_2$ which is an domain of influence
for the first-order differential operator $Z$, contains the point
$(\bar\si, \th_0,1)$, and is bounded by the planes $\{x=-C_0\}$,
$\{x=M\}$, $\{y=0\}$, $\{\rho=0\}$, $\{\rho=1\}$, $S_+$, and
$S_-$. Here, $S_+$ and $S_-$ do not intersect in $D_2$, and their
normal directions are $(-\eta,\nu,1)$ and $(-\eta,-\nu,1)$,
respectively, $\nu>0$ is an appropriate constant. In addition, it is
assumed that we are given a smooth function $\phi$ on $D_3$ and a
constant $\la$ in \eqref{3-11} close to $\phi_0$ and $\la_0$,
respectively, where the function $\phi$ also satisfies \eqref{H} at
some point $(\bar x_0, \bar y_0, 1)$. (This is achieved invoking the
implicit function theorem established in \cite{1} in terms of the
properties of $\phi_0$ satisfying \eqref{H} at the point
$(\bar\si,\th_0,1)$.)


\subsection{The tame estimate and solvability of (3.12)--(3.13)}

Note that $\rho=0$ is a characteristic surface of the operator
$ZS-\ve^2(S\phi)N$. Then
\begin{equation}\label{3-16}
s_0-(S\phi)N_1=0.
\end{equation}

As in \cite{1}, set
\[
A=S\phi,\quad\delta=1-t,\quad g=\exp h(x-t),\quad
p=\delta\sqrt g,\quad |\cdot|_0=\|\cdot\|_{L^2(D_3)}.
\]
Then one obtains the following energy estimate:
\begin{lemma}\label{lem3-2}
There exist $C>0$, $\ve_0>0$, $\eta_0>0$ and $h_0>0$ such that for
$\phi$ satisfying \textup{\eqref{H}}, \eqref{3-16}, and
\begin{equation}\label{3-17}
|\phi-\phi_0|_{C^4(D_3)}
+|w-w_0|_{C^4(D_3)}+|v-v_0|_{C^4(D_3)}\leq\eta_0,
\end{equation}
one has, for all $0\leq\ve\leq\ve_0$ and $h\geq h_0$,
\begin{multline}\label{3-18}
\int_{D_3}\f{1}{A}\delta g(1+\delta h)(S\dot z)^2+\ve^2\int_{D_3}
\delta g(A+\delta h)(\p_y\dot z)^2+h|pZ\dot z|_0^2+|pZ\dot\phi|_0^2
+h|p\dot\phi|_0^2\\
\leq C|p\dot F_1|_0^2+C\ds\int_{D_3}\delta^4g|\dot F_2|^2,
\end{multline}
where the functions $\dot\phi$ and $\dot z$ are smooth and flat on
$\{t=0\}$ and $\{x=M\}$.
\end{lemma}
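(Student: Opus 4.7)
The plan is to derive \eqref{3-18} by the direct multiplier method mentioned before the statement of the lemma, that is, without introducing the auxiliary third-order unknown $\dot k$ used in \cite{1,2}. I would test \eqref{3-14} and \eqref{3-15} against carefully chosen weighted multipliers built from $g=\exp h(x-t)$, $\delta=1-t$ and $A=S\phi$, and play the two resulting identities off against each other so that the positive quadratic terms in \eqref{3-18} end up on the left while everything else is either absorbed by taking $h$ large or moved to the right by Cauchy--Schwarz. The role of $g$ is that integrating by parts in $x$ or $t$ pulls out $\pm h\,g$, producing the factors of $h$ in \eqref{3-18}, and the role of $\delta$ is to exploit closeness to the blow-up time and make the boundary term at $\{\rho=1\}$ vanish.

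For \eqref{3-14} I would use a multiplier of the form $\tfrac{1}{A}g(1+\delta h)\,S\dot z$. The principal part $ZS\dot z-\ve^2(S\phi)N\dot z$ tested against $S\dot z$ yields, after integration by parts, the bulk terms $\int_{D_3}\tfrac{1}{A}\delta g(1+\delta h)(S\dot z)^2$, $\ve^2\!\int_{D_3}\delta g(A+\delta h)(\p_y\dot z)^2$ and $h|pZ\dot z|_0^2$. The characteristic condition \eqref{3-16} kills the boundary term on $\{\rho=0\}$, and the flatness of $\dot z$ on $\{x=M\}$ kills the side boundary contribution. By \eqref{H}, $A\ge0$ on $D_3$ and the singular weight $\tfrac{1}{A}$ is compensated by the factor $A$ produced by $ZS\dot z$. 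The $(1+\delta h)$ factor is essential: it gives the $h\delta$-gain required to absorb the $O(1)$ terms $\al_1 S\dot z$ and $\al_2\dot z$, which, in contrast to $(3.1.1_a)$ of \cite{2}, are not $O(\ve^2)$.

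For \eqref{3-15} I would take the multiplier $p^2\,Z\dot\phi$. Integration by parts in $\rho$ against $g$ yields $|pZ\dot\phi|_0^2$ and, from the $\al_4\dot\phi$ contribution, $h|p\dot\phi|_0^2$. The coupling to $\dot z$ through $\ve^2\g_0 N\dot z$, $\ve^2 Z(\al_5Z+\al_6\p_y)\dot z$, $\ve^2 l_2(\nabla\dot z)$, $\al_7Z\dot z$ and $\al_8\dot z$ — including the extra terms with no counterpart in $(3.1.1_b)$ of \cite{2} — is handled by Cauchy--Schwarz: the $\ve^2$-prefactors allow bounding by a constant times the bulk terms already on the left of \eqref{3-18} (produced by the \eqref{3-14}-estimate), while the $O(1)$ contributions $\al_7Z\dot z+\al_8\dot z$ are dominated by the $h$-gain in $h|pZ\dot z|_0^2$ and $h|p\dot\phi|_0^2$. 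The cross terms $\beta_1 Z\dot\phi+\beta_2\dot\phi$ in \eqref{3-14} are absorbed symmetrically by combining the two estimates with appropriate relative weights so that the cross couplings telescope.

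The main obstacle, compared with the analogous step in \cite{2}, is precisely the non-smallness of $\al_1,\al_2$ in \eqref{3-14} together with the additional first-order-in-$\dot z$ terms in \eqref{3-15} produced by the simultaneous appearance of $u$ and $\nabla u$ in $g_{ij}$. A naive multiplier will not close; the remedy outlined above — inserting the $(1+\delta h)$ factor into the first multiplier and keeping the $h$-gain in $h|pZ\dot z|_0^2$ and $h|p\dot\phi|_0^2$ on the left to dominate the new terms — is what makes the estimate closable. Once this is set up, choosing $\ve\le\ve_0$, $\eta_0$ small and $h\ge h_0$ large yields \eqref{3-18}.
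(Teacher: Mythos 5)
Your overall strategy (weighted multipliers built from $g=\exp h(x-t)$, $\delta$, $A$, plus absorption of the non-small $\al_1,\al_2$ by taking $h$ large) is indeed the paper's general framework, but two essential ingredients of the actual proof are missing, and without them the estimate does not close. First, your claim that the cross terms $\beta_1Z\dot\phi+\beta_2\dot\phi$ in \eqref{3-14} can be ``absorbed symmetrically by combining the two estimates with appropriate relative weights'' fails: the term $|pZ\dot\phi|_0^2$ enters the left-hand side of \eqref{3-18} with coefficient $1$, not $h$, so the contribution $C\beta_1^2|pZ\dot\phi|_0^2$ produced on the right by the first equation can only be absorbed if $\beta_1$ itself is small; weighting the $\dot\phi$-estimate by a large constant $K$ does not help, because its right-hand side then contributes $KC\int\delta^2 g\,A^{-2}(S\dot z)^2$, and since only $\delta\le CA$ is available this cannot be dominated by $\int A^{-1}\delta g(1+\delta h)(S\dot z)^2$ near $\{\delta=0\}$ for large $K$. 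The paper's proof therefore has to \emph{prove} smallness: using the system \eqref{3-24} satisfied by the background, a direct computation gives $\beta_1(\phi_0,w_0,v_0)=O(\ve^{1/2})$, $\beta_2(\phi_0,w_0,v_0)=O(\ve^{1/2})$, and then \eqref{3-17} transfers this to $(\phi,w,v)$. This step is the heart of the lemma and is absent from your argument.

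Second, your treatment of the coupling $\ve^2\g_0N\dot z$ in \eqref{3-15} by Cauchy--Schwarz against ``the bulk terms already on the left of \eqref{3-18}'' cannot work: $N\dot z$ contains $Z^2\dot z$ and $\p_y^2\dot z$, i.e.\ second derivatives of $\dot z$, which are not controlled by any term on the left of \eqref{3-18}, with or without the $\ve^2$ prefactor. The paper avoids this by solving \eqref{3-14} for $\ve^2N\dot z$, substituting the result into \eqref{3-15} (so that the dangerous term becomes $\g_0A^{-1}ZS\dot z$ plus lower order, which can be packaged into the perfect square $|p(Z\dot\phi+\al_3\dot\phi+\cdots+\g_0A^{-1}S\dot z)|_0^2$), and then invoking the weighted Poincar\'e-type inequality \eqref{3-21} with $\vartheta=3$ together with $\delta\le CA$; no multiplier such as your $p^2Z\dot\phi$ is used for \eqref{3-15}. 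A further, more technical point: for \eqref{3-14} the paper's multiplier is $\mathcal M\dot z=aS\dot z+dZ\dot z$ with $a=A^{-1}\delta^2 g$ and $d=-\delta^2 g$; the $dZ\dot z$ component is what generates the uniform-in-$\ve$ term $h|pZ\dot z|_0^2$ (via $S$ hitting $g$), whereas with your pure $S\dot z$ multiplier the $(Z\dot z)^2$ terms only arise from the $\ve^2AN$ part with $\ve^2$ factors, so the $h|pZ\dot z|_0^2$ term -- which you yourself need to absorb $\al_7Z\dot z$ and the $Ch^{-1}|pZ\dot z|_0^2$ remainder -- would not appear. Finally, the sign analysis of the boundary terms $I_1\ge 0$, $J_1\le 0$, $L_\pm\ge 0$ on $\{t=1\}$, $\{x=-C_0\}$ and $S_\pm$, which uses \eqref{H}, \eqref{3-16} and the geometry of $D_3$, also needs to be carried out and is not addressed in your sketch.
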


\begin{proof}
Set $P=ZS-\ve^2AN$ and choose the multiplier $\mathcal M\dot z=aS\dot
z+dZ\dot z$ as in \cite{1}, where the functions $a$ and $d$ are to be
determined later. By an integration by parts, one obtains
\begin{multline}\label{3-19}
\int_{D_3}(P\dot z)(\mathcal M\dot z)\,dxdyd\rho
=\int_{D_3} K_1(S\dot z)^2+\int_{D_3} K_2(\p_y\dot z)^2+\int_{D_3}
K_3(Z\dot z)^2\\ +\int_{D_3} K_4(S\dot z)(\p_y\dot z)
+\int_{D_3} K_5(S\dot z)(Z\dot z)+\int_{D_3} K_6(\p_y\dot z)(Z\dot z) \\
+I_1-I_0-J_1+L_++L_-,
\end{multline}
where
\begin{align*}
K_1&=-\f12(Za)-\f12\ve^2(\p_yz_0)a, \\
K_2&=-\f12\ve^2S(AaN_3)-\f12\ve^2Z(AdN_3)-\ve^8AaN_2(\p_\rho s_0)z_0
-\ve^6AaN_2(\p_xz_0)\\
&\qquad-\ve^{10}z_0^2(\p_ys_0)AaN_2-\ve^8s_0(\p_\rho z_0)AaN_2
-\f12\ve^4(\p_\rho s_0)AaN_3\\
&\qquad-\ve^6AaN_3(\p_y s_0)z_0
-\f12\ve^4(\p_yz_0)AdN_3+\ve^4AdN_3(\p_y z_0),\\
K_3&=-\ds\f12(Sd)-\f12\ve^2(\p_\rho s_0)d+\ve^2d(\p_\rho s_0)
+\ve^4z_0(\p_ys_0)d-\f12\ve^4(\p_\rho s_0)AaN_1\\
&\qquad-\f12\ve^2S(AaN_1)
+\ve^4AaN_1(\p_\rho s_0)+\ve^6AaN_2(\p_ys_0)+\f12\ve^4(\p_yz_0)AdN_1\\
&\qquad+\f12\ve^2Z(AdN_1)+\ve^4\p_y(AdN_2),\\
K_4&=\ve^6(\p_yz_0)AaN_2+\ve^4Z(AaN_2)+\ve^6AaN_2(\p_yz_0)+\ve^2\p_y(AaN_3),\\
K_5&=\ve^4AaN_1(\p_yz_0)+\ve^2Z(AaN_1)+\ve^4\p_y(AaN_2),\\
K_6&=-\ve^4d(\p_\rho s_0)z_0-\ve^2d(\p_xz_0)-\ve^6z_0^2(\p_ys_0)d
-\ve^4ds_0(\p_\rho z_0)-\ve^6AaN_1(\p_\rho s_0)z_0,\\
&\qquad-\ve^6(\p_\rho s_0)AaN_2-\ve^4S(AaN_2)-\ve^8AaN_2(\p_ys_0)z_0
+\ve^6AaN_2(\p_\rho s_0)\\
&\qquad+\ve^8z_0(\p_ys_0)AaN_2
+\ve^4AaN_3(\p_ys_0)+2\ve^6AdN_2(\p_yz_0)+\ve^2\p_y(AdN_3),\\
\intertext{and}
I_1&=\int_{\{t=1\}}\biggl\{\f12a(S\dot
z)^2+\f12\ve^2s_0d(Z\dot z)^2-\ve^2AaN_1(Z\dot z)(S\dot
z)+\f12\ve^4s_0AaN_1(Z\dot z)^2\\
&\qquad-\ve^4AaN_2(\p_y\dot z)(S\dot z),
+\ve^6s_0AaN_2(\p_y\dot z)(Z\dot
z)+\f12\ve^4s_0AaN_3(\p_y\dot z)^2\\
&\qquad-\f12\ve^2AdN_1(Z\dot z)^2
+\f12\ve^2AdN_3(\p_y\dot z)^2\biggr\}\,dS,\\
I_0&=0,\\
J_1&=\int_{\{x=-C_0\}}\biggl\{\f12d(Z\dot
z)^2+\f12\ve^2AaN_1(Z\dot z)^2
+\ve^4AaN_2(\p_y\dot z)(Z\dot
z)
\\&\qquad
+\f12\ve^2AaN_3(\p_y\dot z)^2\biggr\}\,dS,\\
L_\pm&=\ds\int_{S_\pm}\biggl\{\f12a(S\dot z)^2\mp\f12\ve^2\nu
az_0(S\dot z)^2-\f12\eta d(Z\dot z)^2+\f12\ve^2s_0d(Z\dot
z)^2\\
&\qquad-\ve^2AaN_1(Z\dot z)(S\dot z)\pm\ve^4\nu AaN_1z_0(Z\dot z)(S\dot z)
-\f12\ve^2\eta AaN_1(Z\dot z)^2\\
&\qquad+\f12\ve^4s_0AaN_1(Z\dot z)^2-\ve^4AaN_2(\p_y\dot z)(S\dot z)
\pm\ve^6\nu z_0AaN_2(\p_y\dot z)(S\dot z)\\
&\qquad-\ve^4\eta AaN_2(\p_y\dot z)(Z\dot z)
+\ve^6s_0AaN_2(\p_y\dot z)(Z\dot z)\pm\ve^4\nu AaN_2(S\dot z)(Z\dot z)\\
&\qquad\pm\ve^2\nu AaN_3(\p_y\dot z)(S\dot z)-\f12\ve^2\eta AaN_3(\p_y\dot z)^2
+\f12\ve^4s_0AaN_3(\p_y\dot z)^2\\
&\qquad-\f12\ve^2AdN_1(Z\dot z)^2\pm\f12\ve^4\nu z_0AdN_1(Z\dot z)^2
\pm\ve^4\nu AdN_2(Z\dot z)^2\\
&\qquad\pm\ve^2\nu AdN_3(\p_y\dot z)(Z\dot z)+\f12\ve^2AdN_3(\p_y\dot z)^2
\mp\f12\ve^4\nu AdN_3z_0(\p_y\dot z)^2\biggr\}\,dS.
\end{align*}
Choosing $a=A^{-1}\delta^2g$ and $d=-\delta^2g$ and employing
condition \eqref{H} on $\phi$, \eqref{3-16}, and geometric properties of
$D_3$, as in the proof of \cite[Proposition~3.3]{2} one obtains by
\eqref{3-19} and a careful computation that
\[
I_1=\ds\int_{\{t=1\}}\f12a(S\dot z)^2dS\geq 0,\quad J_1\leq 0,\quad L_\pm\geq 0
\]
and
\begin{equation}\label{3-20}
\int_{D_3}\f{1}{A}\delta g(1+\delta h)(S\dot z)^2+\ve^2\int_{D_3}
\delta g(A+\delta h)(\p_y\dot z)^2+h\left|pZ\dot z\right|_0^2\leq
C\left|pP\dot z\right|_0^2.
\end{equation}
Note that, for any $\vartheta>0$, there exists a $C>0$ such that, for
all $h\geq 1$ and all smooth $\psi$ satisfying $\psi|_{t=0}=0$, the
inequality
\begin{equation}\label{3-21}
\int_{D_3}(\delta^{\vartheta-1}+h\delta^\vartheta)g\psi^2\leq
C\int_{D_3}\delta^{\vartheta+1}g(Z\psi)^2
\end{equation}
holds. In addition,
\begin{equation}\label{3-22}
P\dot z=\dot F_1-\al_1 S\dot z-\ve^2l_1(\na \dot z)
-\al_2\dot z-\beta_1Z\dot\phi-\beta_2\dot\phi.
\end{equation}
Substituting \eqref{3-22} into the right hand side of \eqref{3-20} and
using \eqref{3-21} for $\vartheta=2$ and the function $\dot z$ instead
of $\psi$, one obtains, for sufficiently large $h>0$,
\begin{multline}\label{3-23}
\int_{D_3}\f{1}{A}\delta g(1+\delta h)(S\dot z)^2+\ve^2\int_{D_3}
\delta g(A+\delta h)(\p_y\dot z)^2+h\left|pZ\dot z\right|_0^2 \\
\leq C\left|p(\dot F_1-\beta_1Z\dot\phi-\beta_2\dot\phi)\right|_0^2,
\end{multline}
where we note that the ``largeness'' of $\al_1$ and $\al_2$ does not
play a role, as the parameter $h>0$ can be chosen as large as
required.

Next we estimate $\beta_1$ and $\beta_2$ in \eqref{3-14} and
\eqref{3-23}. Upon substituting $(\bar{\phi}_0,\bar w_0,\bar v_0)$
into the expressions for $\beta_1$ and $\beta_2$, a direct computation
yields
\[
\beta_1(\bar{\phi}_0,\bar w_0,\bar v_0)=O(\ve^{1/2}),\quad
\beta_2(\bar{\phi}_0,\bar w_0,\bar v_0)=O(\ve^{1/2}).
\]

On the other hand, by estimate \eqref{2-12} and in the coordinate system
$(X,Y,T)$ of \eqref{3-8}, one has
\begin{equation}\label{3-24}
\left\{ \enspace
\begin{aligned}
&\p_T\bar\phi+\ds\sum_{i,j}\biggl(d_{ij}\bar w+
\ds\sum_ke_{ij}^k\hat\o_k\bar v\biggr)\hat\o_i\hat\o_j+O(\ve)=0,\\
&-\p_T\bar v+O(\ve)=0,\\
&\p_X\bar w-\bar v\p_X\bar\phi=0,\\
&\bar\phi(X,Y,0)=\bar\phi_0(X,Y,0),\\
&\bar v(X,Y,0)=\bar v_0(X,Y,0)+O(\ve^{1/2}),\\
&\bar\phi(M,Y,T)=\bar\phi_0(M,Y,T)=M,\\
&\bar v(M,Y,T)=\bar v_0(M,Y,T)=0.
\end{aligned}
\right.
\end{equation}
Then it follows from the expressions for $({\phi}_0, w_0, v_0)$,
$\beta_1$, $\beta_2$, \eqref{3-24}, and a direct computation that
\begin{equation*}
\beta_1({\phi}_0,w_0,v_0)=O(\ve^{1/2}),\quad
\beta_2({\phi}_0,w_0,v_0)=O(\ve^{1/2}).
\end{equation*}
This together with \eqref{3-17} yields that $\beta_1(\phi,w,v)$ and
$\beta_2(\phi,w,v)$ are small when $\eta_0>0$ is small.

By \eqref{3-14}, one has
\begin{equation*}
\ve^2 N\dot z=\ds\f{1}{A}\bigl(ZS\dot z+\al_1 S\dot z+\ve^2l_1(\na
\dot z)+\al_2\dot z+\beta_1Z\dot\phi +\beta_2\dot\phi-\dot F_1).
\end{equation*}
Substituting this
into \eqref{3-15} and utilizing \eqref{3-21}
for $\vartheta=3$ yields
\begin{multline*}
\left|p(Z\dot\phi+\al_3\dot\phi+\ve^2\al_5Z\dot z+\ve^2\al_6\p_y\dot z
+\al_7\dot z+\g_0A^{-1}S\dot z)\right|_0^2\\
\leq C\ds\int\delta^4g\bigl\{|\dot F_2|^2+A^{-2}|\dot F_1|^2
+A^{-2}|\dot\phi|^2+A^{-4}|S\dot z|^2 \\
+\ve^4A^{-2}|\nabla\dot z|^2+A^{-2}|\dot z|^2
+\beta_1^2A^{-2}|Z\dot\phi|^2\bigr\}.
\end{multline*}
Noting $\delta\leq CA$, it follows that
\begin{multline*}
\left|pZ\dot\phi\right|_0^2\leq C\int\delta^4g|\dot
F_2|^2\\
+C\int\delta^2g\bigl\{|\dot F_1|^2+|\dot\phi|^2 +A^{-2}|S\dot
z|^2+\ve^4|\nabla\dot z|^2+|\dot
z|^2+\beta_1^2|Z\dot\phi|^2\bigr\}.
\end{multline*}
Making use of the smallness of $\beta_1$ and the inequality
$h\left|p\dot\phi\right|_0^2\leq C\left|Z\dot\phi\right|_0^2$, one has
then that
\begin{multline*}
\left|pZ\dot\phi\right|_0^2+h|p\dot\phi|_0^2\leq C\int\delta^4g|\dot
F_2|^2\\+C\int\delta^2g\bigl\{|\dot F_1|^2+A^{-2}|S\dot z|^2
+\ve^4|\nabla\dot z|^2\bigr\}+Ch^{-1}\left|pZ\dot z\right|_0^2.
\end{multline*}
Combining this with \eqref{3-23} completes the proof of
\eqref{3-18}. 
\end{proof}

Next we establish higher-order tame estimates on the solutions of
\eqref{3-14}--\eqref{3-15}.

\begin{lemma}\label{lem3-3}
There exist $\ve_0>0$, $\eta_0>0$ and an integer $n_0$ such that for
smooth functions $(\phi,w,v)$ satisfying\/ \textup{\eqref{H}}, \eqref{3-16},
and
\[
|\phi-\phi_0|_{C^4(D_3)}
+|w-w_0|_{C^4(D_3)}+|v-v_0|_{C^4(D_3)}\leq\eta_0,
\]
one has, for all integers $s$ and all $0\leq\ve\leq\ve_0$,
\begin{multline}\label{3-30}
|\dot\phi|_{H^s(D_3)}+|\dot z|_{H^s(D_3)} \\
\leq C_s\bigl(|\phi|_{H^{s+n_0}(D_3)}, |w|_{H^{s+n_0}(D_3)},
|v|_{H^{s+n_0}(D_3)}\bigr)\bigl(|\dot F_1|_{H^{s}(D_3)}
+|\dot F_2|_{H^{s}(D_3)}\bigr),
\end{multline}
where $\dot\phi$ and $\dot z$ are the solutions of
\eqref{3-14}--\eqref{3-15} which are smooth and flat on $\{t=0\}$ and
$\{x=M\}$.
\end{lemma}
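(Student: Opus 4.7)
The plan is to derive \eqref{3-30} by induction on $s$, using Lemma~\ref{lem3-2} as the base case and commuting tangential derivatives with the system \eqref{3-14}--\eqref{3-15}. For the base case $s=0$, a direct application of Lemma~\ref{lem3-2} with $h$ fixed sufficiently large already controls $|\dot\phi|_{L^2(D_3)}+|\dot z|_{L^2(D_3)}$ by $|\dot F_1|_{L^2(D_3)}+|\dot F_2|_{L^2(D_3)}$, since the weights $\dl$, $g$, and $p=\dl\sqrt g$ are bounded above and below by harmless constants on the compact domain $D_3$.

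For the inductive step, I would fix a collection of smooth vector fields $\{T_1,\dots,T_N\}$ on $D_3$ that are tangent to every boundary component on which $\dot\phi$ and $\dot z$ are flat (namely $\{x=M\}$ and $\{t=0\}$, as well as the lateral surfaces $S_\pm$ and $\{y=\th_0\pm\dl_0\}$) and that together with a single transverse field span the tangent space. For $|\al|\le s$ I apply $T^\al$ to \eqref{3-14}--\eqref{3-15}. The commutators $[T^\al, ZS]$, $[T^\al, \ve^2(S\phi)N]$, $[T^\al, Z^2]$, and $[T^\al,\al_j\cdot]$, $[T^\al,\beta_j\cdot]$, $[T^\al,\g_0\cdot]$ produce source terms involving at most $s+1$ derivatives of $(\dot\phi,\dot z)$ together with up to $s+2$ derivatives of the coefficients, which are smooth functions of $(x,y,\rho,\ve,\phi,w,v,\na\phi,\na w,\na v)$. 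The flatness of $(\dot\phi,\dot z)$ on $\{t=0\}$ and $\{x=M\}$ transfers to $(T^\al\dot\phi,T^\al\dot z)$ by the tangentiality of $T^\al$, so Lemma~\ref{lem3-2} applies to the commuted system to yield, schematically,
\begin{equation*}
|T^\al\dot\phi|_{L^2(D_3)}+|T^\al\dot z|_{L^2(D_3)}
\le C\bigl(|T^\al\dot F_1|_{L^2}+|T^\al\dot F_2|_{L^2}+|R_1^\al|_{L^2}+|R_2^\al|_{L^2}\bigr),
\end{equation*}
where $R_i^\al$ collects the commutator contributions.

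The principal obstacle, and the crux of the proof, is to estimate $R_i^\al$ in \emph{tame} form. I would invoke the standard Moser-type product inequality $|fg|_{H^s(D_3)}\le C(|f|_{L^\infty}|g|_{H^s}+|g|_{L^\infty}|f|_{H^s})$ and the nonlinear composition estimate $|F(u)|_{H^s(D_3)}\le C_s(|u|_{L^\infty})(1+|u|_{H^s(D_3)})$ on the bounded Lipschitz domain $D_3\subset\mathbb R^3$. Fix $n_0$ strictly larger than $3/2$ plus the maximum number of derivatives that can fall on a coefficient in any commutator arising from \eqref{3-14}--\eqref{3-15}; this is a finite, explicit integer determined by the orders of the operators. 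Then Sobolev embedding $H^{s+n_0}(D_3)\hookrightarrow C^k(D_3)$ for sufficiently large $k$ bounds every $L^\infty$ norm of a coefficient and its relevant derivatives by a quantity of the form $C_s(|\phi|_{H^{s+n_0}},|w|_{H^{s+n_0}},|v|_{H^{s+n_0}})$, with the smallness hypothesis $|\phi-\phi_0|_{C^4}+|w-w_0|_{C^4}+|v-v_0|_{C^4}\le\eta_0$ ensuring all composition arguments stay in a fixed bounded set. The terms in $R_i^\al$ that carry $s+1$ derivatives of $(\dot\phi,\dot z)$ are either of order $\le s-1$ in $(\dot\phi,\dot z)$ with a coefficient factor absorbing one derivative, hence controlled by the induction hypothesis, or are pre-multiplied by weights that permit absorption into the left-hand side of \eqref{3-18} upon taking $h$ large enough (this is precisely why Lemma~\ref{lem3-2} retains the large parameter $h$). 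Iterating this bound up to $|\al|=s$ and summing closes the induction and produces \eqref{3-30}.
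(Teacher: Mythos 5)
Your overall strategy---commuting derivatives through \eqref{3-14}--\eqref{3-15}, applying Lemma~\ref{lem3-2} to the commuted system, and closing with Moser-type product/composition estimates plus Sobolev embedding to fix $n_0$---is the same as the paper's. But the crux of the paper's proof is exactly the step you leave unverified: the structure of the commutators. The paper commutes with the adapted fields $\mathcal T=\{Z,S,\p_y\}$, which are built from the principal parts of the operators, and computes that every top-order (order $l+1$) term in $[T^l,P]$ and $[T^l,Z^2]$ carries a factor $\ve^2$ and in addition contains at most $l$ factors of $S$ (respectively at least one $Z$ and $a+b\ge 2$). This matters because the left-hand side of \eqref{3-18} controls $\p_yT^l\dot z$ only with an $\ve^2$ weight and $ST^l\dot z$ only once applied (with weight $\f1A\dl g(1+\dl h)$), while the large parameter $h$ multiplies only the $Z\dot z$-, $\dot\phi$- and $\dl h$-portions of the left-hand side. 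Consequently a commutator term such as $\p_y^{l+1}\dot z$ without an $\ve^2$ prefactor, or any term containing $S^{l+1}\dot z$, cannot be absorbed by taking $h$ large, contrary to your assertion that the top-order remainders are ``pre-multiplied by weights that permit absorption \dots upon taking $h$ large enough.'' With your generic collection of boundary-tangent vector fields there is no reason for the commutators to have this $\ve^2$-weighted, $S$-degree-limited structure, so the absorption step fails; the tangentiality you impose, on the other hand, is not needed at all, since flatness on $\{t=0\}$ and $\{x=M\}$ (vanishing to infinite order) is preserved under arbitrary differentiation.

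Two smaller points. Your base-case justification that the weights are ``bounded above and below by harmless constants on the compact domain $D_3$'' is false: $\dl=1-t$ vanishes on the part $\{t=1\}$ of $\p D_3$ (which contains the distinguished point $(\bar\si,\th_0,1)$), so $p=\dl\sqrt g$ degenerates there; passing from the weighted quantities in \eqref{3-18} to unweighted $L^2$ norms uses \eqref{3-21} together with the flatness at $\{t=0\}$ and the identification of $|p\cdot|_0$ with $|\cdot|_0$ invoked in the paper, not compactness of $D_3$. Finally, the paper does not argue by induction on $s$: it applies Lemma~\ref{lem3-2} directly to $(T^l\dot z,T^l\dot\phi)$ for all $l\le s$ simultaneously, uses the explicit commutator structure to place all error terms on the admissible side, and then uses that the space generated by $\{Z,S,\p_y\}$-derivatives coincides with $H^s(D_3)$ before converting the smooth coefficient bounds into the tame form $C_s\bigl(|\phi|_{H^{s+n_0}},|w|_{H^{s+n_0}},|v|_{H^{s+n_0}}\bigr)$ in \eqref{3-30}.
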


\begin{proof}
Let $\mathcal T=\{Z,S,\p_y\}$. For $k\in\mathbb N$ and $T\in\mathcal T$,
one has by a direct computation that
\[
[T^l,P]
=\ve^2\sum_{\substack{a+b+c=l+1\\b\leq l}}
C_{abc}^1Z^aS^b\p_y^c+\ve^2\sum_{a+b+c\leq l}C_{abc}^2Z^aS^b\p_y^c
\]
and
\[
[T^l,Z^2]=\ve^2\sum_{\substack{a+b+c=l+1\\a\geq 1,a+b\geq 2}}D_{abc}^1
Z^aS^b\p_y^c+\ve^2\sum_{a+b+c\leq l}D_{abc}^2Z^aS^b\p_y^c,
\]
where $C^i_{abc}$ and $D^i_{abc}$ ($i=1,2$) are smooth functions of
$(\phi, w, v)$.

By taking the derivatives of up to order $l$ on both sides of
\eqref{3-14}--\eqref{3-15}, one arrives at
\[
\left\{ \enspace
\begin{aligned}
&(P+\al_1S+\ve^2l_1\nabla+\al_2)T^l\dot z+\beta_1 ZT^l\dot\phi
+\beta_2T^l\dot\phi+\ve^2\sum_{\substack{a+b+c=l+1\\b\leq l}}
\bar C_{abc}^1Z^aS^b\p_y^c\dot z\\
&\qquad +\sum_{a+b+c\leq l}\bar C_{abc}^2Z^aS^b\p_y^c\dot z+\sum_{a+b+c\leq l}
\bar C_{abc}^3Z^aS^b\p_y^c\dot\phi=T^l\dot F_1,\\
&Z^2T^l\dot\phi+\al_3ZT^l\dot\phi+\al_4T^l\dot\phi+\ve^2\g_0NT^l\dot z
+\ve^2Z(\al_5Z+\al_6\p_y)T^l\dot z+\ve^2l_2(\na T^l\dot z)\\
&\qquad +\al_7ZT^l\dot z+\al_8T^l\dot z+\ve^2\sum_{a+b+c=l}
\bar D_{abc}^1Z^{a+1}S^b\p_y^c\dot\phi+\sum_{a+b+c\leq l}
\bar D_{abc}^2Z^aS^b\p_y^c\dot\phi\\
&\qquad +\ve^2\sum_{a+b+c=l+1}\bar D_{abc}^3Z^aS^b\p_y^c\dot z+\sum_{a+b+c\leq l}
\bar D_{abc}^4Z^aS^b\p_y^c\dot z=T^l\dot F_2,
\end{aligned}
\right.
\]
where $\bar C^i_{abc}$ ($1\le i\le 3$) and $\bar D^j_{abc}$ ($1\le
j\le 4$) are smooth functions of $(\phi, w, v)$.

Applying Lemma \ref{lem3-2} to $(T^l\dot z, T^l\dot\phi)$ yields by a
direct computation that
\begin{multline*}
\sum_{T\in\mathcal T}\biggl(\quad\ds\int_{D_3}\f{1}{A}\delta
  g(1+\delta h)(ST^l\dot z)^2 \\
+\ve^2\int_{D_3}\delta g(A+\delta
  h)(\p_yT^l\dot z)^2+h|pZT^l\dot
  z|_0^2+|pZT^l\dot\phi|_0^2+h|pT^l\dot\phi|_0^2\biggr)\\
 \leq
  C\sum_{T\in\mathcal T}\bigl(|pT^l\dot
  F_1|_0^2+C\ds\int_{D_3}\delta^4g|T^l\dot F_2|^2\bigr).
\end{multline*}
Note that the space $\tilde H^s=\{f\in L^2(D_3)\colon T^lf\in
L^2(D_3), l\leq s\}$ is the usual Sobolev space $H^s(D_3)$. For
$(\phi, w, v)\in H^{s+n_0}(D_3)$ with a suitably large $n_0\in\mathbb N$,
when utilizing the fact that $|pv|_0$ is equivalent to $|v|_0$, one
gets the desired tame estimate \eqref{3-30}.  
\end{proof}

As in \cite[Proposition~3.4]{2}, based on Lemmas
\ref{lem3-2}--\ref{lem3-3}, one obtains the following result by the
standard Picard iteration and a fixed-point argument:

\begin{lemma}\label{lem3-4}
Let $(\phi, w, v)$ and $\ve$ satisfy the assumptions of\/
\textup{Lemma~\ref{lem3-3}}. Then, for all smooth $\dot F_1$ and $\dot
F_2$ which are flat on $\{t=0\}$ and $\{x=M\}$, there exists a unique
smooth solution $(\dot\phi,\dot z)$ of \eqref{3-14}--\eqref{3-15}
which is flat on $\{t=0\}$ and $\{x=M\}$. Moreover, $(\dot\phi,\dot
z)$ satisfies the tame estimate \eqref{3-30}.
\end{lemma}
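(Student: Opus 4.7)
The plan is to decouple \eqref{3-14}--\eqref{3-15} into two separate linear problems and run a Picard iteration, then pass to the limit using the a priori estimates already in hand from Lemmas~\ref{lem3-2}--\ref{lem3-3}.

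First I would establish solvability of each decoupled principal equation in isolation. With the $\dot\phi$-terms moved to the right-hand side, \eqref{3-14} becomes a second-order hyperbolic equation for $\dot z$ driven by the operator $L_1 = ZS - \ve^2(S\phi)N + \al_1 S + \ve^2 l_1\na + \al_2$; with the $\dot z$-terms moved to the right-hand side, \eqref{3-15} becomes an iterated-transport equation for $\dot\phi$ driven by $L_2 = Z^2 + \al_3 Z + \al_4$. For each of these, existence of an $L^2$ solution flat on $\{t=0\}$ and $\{x=M\}$ follows via the standard Hahn--Banach/Riesz recipe once one has the energy estimate of Lemma~\ref{lem3-2} both for the operator itself and for its formal adjoint on $D_3$ (with the roles of $\{\rho=0\}$ and $\{\rho=1\}$, and of $S_\pm$, reversed); Lemma~\ref{lem3-3} then raises this weak solution to arbitrary Sobolev order.

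Next I would set up the Picard scheme $(\dot\phi^{(0)},\dot z^{(0)})=(0,0)$ and at step $n+1$ solve
\begin{align*}
L_1 \dot z^{(n+1)} &= \dot F_1 - \beta_1 Z\dot\phi^{(n)} - \beta_2 \dot\phi^{(n)}, \\
L_2 \dot\phi^{(n+1)} &= \dot F_2 - \ve^2\g_0 N\dot z^{(n+1)} - \ve^2 Z(\al_5 Z + \al_6\p_y)\dot z^{(n+1)} \\
&\qquad {} - \ve^2 l_2(\na\dot z^{(n+1)}) - \al_7 Z\dot z^{(n+1)} - \al_8\dot z^{(n+1)}.
\end{align*}
Each solve is legal by the previous step. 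Applying Lemma~\ref{lem3-2} to the difference $(\dot\phi^{(n+1)}-\dot\phi^{(n)},\dot z^{(n+1)}-\dot z^{(n)})$ in the norm on the left-hand side of \eqref{3-18}, I would exploit the two sources of smallness already recorded inside the proof of Lemma~\ref{lem3-2}: the bounds $\beta_1,\beta_2 = O(\ve^{1/2})$ derived there under hypothesis \eqref{3-17}, and the explicit $\ve^2$ prefactor in front of every $\dot z$-derivative on the RHS of the $\dot\phi$-equation. For $\ve\le\ve_0$ small, these yield a genuine contraction. The limit $(\dot\phi,\dot z)$ inherits flatness on $\{t=0\}$ and $\{x=M\}$ iterate-by-iterate, and uniqueness is immediate from \eqref{3-18} applied to the difference of two solutions with vanishing data.

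To upgrade from $L^2$-type regularity to $C^\infty$ and to obtain the tame bound \eqref{3-30}, I would apply Lemma~\ref{lem3-3} to each iterate, obtaining uniform $H^s$ bounds by induction on $s$; lower semicontinuity of Sobolev norms under weak convergence then transfers the bound to $(\dot\phi,\dot z)$. The main obstacle is the contraction step: one has to verify that the second-order coupling $\ve^2\g_0 N\dot z$ (together with the mixed third-order terms $\ve^2 Z(\al_5 Z + \al_6 \p_y)\dot z$) entering the $\dot\phi$-equation can actually be absorbed by the weighted norm on the LHS of \eqref{3-18}. The point is precisely that \eqref{3-18} controls $\ve^2$-weighted copies of $(\p_y\dot z)^2$ and $(S\dot z)^2$, and the $\ve^2$ appearing in front of $N\dot z$ matches; with a careful bookkeeping of $\ve$-powers this absorption goes through. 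This is, as the authors remark after \eqref{3-15}, what replaces here the change of unknown $\dot z=Z\dot k$ that was used in \cite{1,2} to reduce the analogous system to a third-order scalar equation.
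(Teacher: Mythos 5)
Your overall architecture --- decouple \eqref{3-14}--\eqref{3-15}, solve each linear piece by duality, run a Picard/fixed-point iteration, and upgrade regularity and get \eqref{3-30} via Lemma~\ref{lem3-3} --- is exactly what the paper intends (it only says ``standard Picard iteration and a fixed-point argument'' as in \cite[Proposition~3.4]{2}, based on Lemmas~\ref{lem3-2}--\ref{lem3-3}). But the mechanism you propose for what you yourself call the main obstacle does not work. You claim that $\ve^2\g_0N\dot z$ and the terms $\ve^2Z(\al_5Z+\al_6\p_y)\dot z$ can be \emph{absorbed} by the left-hand side of \eqref{3-18} because that side carries $\ve^2$-weighted copies of $(S\dot z)^2$ and $(\p_y\dot z)^2$ and ``the $\ve^2$ matches.'' Matching powers of $\ve$ does not cure the mismatch in the order of derivatives: $N=N_1Z^2+2\ve^2N_2Z\p_y+N_3\p_y^2$ is second order in $\dot z$, while the left-hand side of \eqref{3-18} controls only first derivatives of $\dot z$ ($S\dot z$, $\p_y\dot z$, $Z\dot z$) in weighted $L^2$, so no bookkeeping of $\ve$-powers bounds $\ve^2\p_y^2\dot z$ or $\ve^2Z^2\dot z$ by those quantities. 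The device that actually closes this step --- and is already used inside the proof of Lemma~\ref{lem3-2} --- is elimination, not absorption: in your scheme you should substitute, from the $\dot z$-equation at the current iterate, $\ve^2N\dot z^{(n+1)}=A^{-1}\bigl(ZS\dot z^{(n+1)}+\al_1S\dot z^{(n+1)}+\ve^2l_1(\na\dot z^{(n+1)})+\al_2\dot z^{(n+1)}+\beta_1Z\dot\phi^{(n)}+\beta_2\dot\phi^{(n)}-\dot F_1\bigr)$, then group the resulting $ZS\dot z^{(n+1)}$ together with $\ve^2Z(\al_5Z+\al_6\p_y)\dot z^{(n+1)}$ and $Z^2\dot\phi^{(n+1)}$ under a single application of $Z$ and integrate the transport operator via \eqref{3-21} with $\vartheta=3$, exactly as in the passage following \eqref{3-23}. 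Your decoupled iteration admits this substitution verbatim; as written, however, your absorption argument would fail.

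A second inaccuracy affects your source of smallness: it is not true that ``every $\dot z$-derivative on the RHS of the $\dot\phi$-equation'' carries an $\ve^2$ prefactor --- \eqref{3-15} contains $\al_7Z\dot z+\al_8\dot z$ with no $\ve^2$, and the paper stresses that, unlike in \cite{2}, the coefficients $\al_1,\al_2$ in \eqref{3-14} are merely bounded. So the map $\dot z^{(n+1)}\mapsto\dot\phi^{(n+1)}$ is only $O(1)$ in the weighted norms; the contraction of the composed map $\dot\phi^{(n)}\mapsto\dot\phi^{(n+1)}$ must come from the return coupling through $\beta_1,\beta_2=O(\ve^{1/2})$ (established in the proof of Lemma~\ref{lem3-2} under \eqref{3-17}), reinforced by taking $h$ large, which is indeed enough for $\ve\le\ve_0$ small. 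With these two corrections --- substitution in place of absorption, and the smallness booked on $\beta_1,\beta_2$ (and $h$) rather than on fictitious $\ve^2$ factors --- your plan, including the duality solvability of the decoupled problems and the inductive $H^s$ bounds from Lemma~\ref{lem3-3}, coincides with the proof the paper has in mind.
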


Consequently, by Lemmas \ref{lem3-2}--\ref{lem3-3}, the standard
Nash-Moser-H\"ormander iteration technique (see \cite{1, 2}),
and Sobolev's embedding theorem, one completes the proof of
Theorem~\ref{thm1-2} in a certain domain $\mathcal D_0$ (where
$\mathcal D_0$ here is the domain $D_3$). \qed


\section{Proof of the Main Theorem}

Utilizing Theorem~\ref{thm1-2}, we now prove Theorem~\ref{thm1-1}.

\smallskip

(i) \ It holds $u(t,x)\in C^1(\Phi(\mathcal D_3))$ and
$\|u\|_{C^1(\Phi(\mathcal D_3))}\le C\ve^2$.

\smallskip

To this end, we will show $G(\si,\th,\tau),\,
\p_{\si}G(\si,\th,\tau)\in C(\Phi(\mathcal D_3))$.  Without loss of
generality, only $G(\si,\th,\tau) \in C(\Phi(\mathcal D_3))$ is
proved. In fact, it is enough to show that $G$ is continuous at the
point $M_{\ve}=(\si_\ve, \th_\ve, \tau_{\ve})\equiv (\phi(m_\ve),
\th_\ve, \tau_{\ve})$ in view of $\phi,w\in C^3(\mathcal D_3)$ and
\eqref{H} of Theorem~\ref{thm1-2}. Let $(\si_n, \th_n, \tau_n)\in
\Phi(\mathcal D_3)$ satisfy $(\si_n, \th_n, \tau_n)\to
(\si_\ve,\th_\ve,\tau_\ve)$ as $n\to\infty$. It then follows from
\eqref{H} that there is a unique point $(s_n, \th_n, \tau_n)\in
\mathcal D_3$ such that $\si_n=\phi(s_n, \th_n, \tau_n)$. By Taylor's
formula, one has
\begin{multline*}
\si_n-\si_\ve=\na_{\th,\tau} \phi(m_{\ve})\cdot
(\th_n-\th_\ve,
\tau_n-\tau_\ve)+ \p_{s\tau}^2\phi(m_{\ve})(s_n-s_\ve)
(\tau_n-\tau_\ve) \\
\begin{aligned}
&+ \f12\,(\th_n-\th_\ve,
\tau_n-\tau_\ve)\na^2_{\th,\tau}\phi(m_\ve)(\th_n-\th_\ve,
\tau_n-\tau_\ve)^T\\
&+ \f16\,\p_s^3\phi(m_\ve)(s_n-s_\ve)^3
+o(|s_n-s_\ve|^3)+o(|\th_n-\th_\ve| +|\tau_n-\tau_\ve|).
\end{aligned}
\end{multline*}
Together with $\p_s^3\phi(m_\ve)>0$, this yields $s_n\to s_\ve$ as
$n\to\infty$. Therefore, one obtains $G\in C(\Phi(\mathcal D_3))$ from
$G(\Phi)=w$ and the continuity of $\phi, w$ in $\mathcal D_3$. Thus,
it follows that $u(t,x)=\ds\f{\ve}{\sqrt{r}}\,G(r-t,\th, \ve\sqrt
t)\in C^1(\Phi(\mathcal D_3))$ and $\|u\|_{C^1(\Phi(\mathcal D_3))}\le
C\ve^2$.

\smallskip

(ii) \ It holds $\ds \f{1}{C(T_\ve-t)}\leq\|\na_{t,x}^2
u(t,\cdot)\|_{L^\infty(\Phi(D_3))}\leq\f{C}{T_\ve-t}$.

Recall that we have obtained the $C^3$ solutions $(\phi, w, v)$ of
\eqref{3-3} in the domain $\mathcal D_3$ by Theorem
\ref{thm1-2}. Therefore, the solution of \eqref{1-1} is obtained in
the domain $\Phi(\mathcal D_3)$ in the coordinate system
$(s,\th,\tau)$. Now we go back to the original coordinate system $(r,
\th, t)$.

For $(s,\th,\tau)\in \mathcal D_3$ close to the point $m_\ve$, by
Taylor's formula, there exists $\bar\tau=\bar\la\tau+
(1-\bar\la)\tau_\ve$ with $0<\bar\la<1$ such that
\begin{equation}\label{4-1}
\p_s\phi(s,\th,\tau)=\p_s\phi(s,\th,\tau_\ve)
+\p_{\tau s}\phi(s,\th,\bar\tau)(\tau-\tau_\ve).
\end{equation}
Another $\bar\eta\in(0,1)$ makes the point $(\bar
s,\bar\th)=(\bar\eta
s+(1-\bar\eta)s_\ve,\bar\eta\th+(1-\bar\eta)\th_\ve)$ to satisfy
\begin{multline}\label{4-2}
\p_s\phi(s,\th,\tau)=\f12(s-s_\ve,\th-\th_\ve)\nabla_{s,\th}^2\p_s\phi(\bar
s,\bar\th,\tau_\ve)(s-s_\ve,\th-\th_\ve)^T \\
+\p_{\tau s}\phi(s,\th,\bar\tau)(\tau-\tau_\ve).
\end{multline}
In addition, we can assume that $-2c_0\leq\p_{s\tau}^2\phi\leq-c_0$ in
$\mathcal D_3$ because of $\p_{s\tau}\phi(m_{\ve})<0$ and $\phi\in
C^3(\mathcal D_0)$; here $c_0>0$ is a constant. Together with
\eqref{4-1} and \eqref{H}, this yields
\begin{equation*}
\p_s\phi(s,\th,\tau)\geq
c_0(\tau_\ve-\tau)=c_0\ve\,\f{T_\ve-t}{\sqrt{T_\ve}+\sqrt{t}}\geq
\f{c_0\ve}{3}\cdot\f{T_\ve-t}{\sqrt{t}}.
\end{equation*}

On the other hand, using the fact that
$\nabla_{s,\th}^2\p_s\phi(m_\ve)>0$ if $|(s-s_\ve,\th-\th_\ve)|
<\tau_{\ve}-\tau$, from \eqref{4-2} it is readily seen that
\begin{equation*}
|\p_s\phi(s,\th,\tau)|\le 3c_0(\tau_{\ve}-\tau)\leq
\f{3c_0 \ve(T_\ve-t)}{2\sqrt t}.
\end{equation*}

From $u=\ds\f{\ve}{\sqrt r}\,G$ and $v=\p_\si G$, one then has
\begin{equation*}
\f{1}{C(T_\ve-t)}\leq\|\na_{t,x}^2
u(t,\cdot)\|_{L^\infty(\Phi(D_3))}\leq\f{C}{T_\ve-t}.
\end{equation*}

\smallskip

(iii) \ It holds $u(t,x)\in C^1([0, T_{\ve}]\times{\mathbb R}^2)\cap
C^2(([0, T_{\ve}]\times{\mathbb R}^2) \setminus \{M_{\ve}\})$ and
$\|u(t,x)\|_{C^1([0, T_{\ve}]\times{\mathbb R}^2)}\le C\ve$.

\smallskip

For $t\le T_{\ve}$ away from $M_{\ve}$, due to assumption (ND), the
smooth solution of \eqref{2-1} does not blow up in $\left(\{t\le
T_{\ve}\}\times{\mathbb R}^2\right)\setminus\{M_{\ve}\}$. Therefore,
similar to the proof on Proposition~\ref{prop2-6}, one obtains
$u(t,x)\in C^2(([0, T_{\ve}]\times{\mathbb R}^2) \setminus
\{M_{\ve}\})$. Furthermore, in the domain $\left(\{t\le
T_{\ve}\}\times{\mathbb R}^2\right)\setminus\{\Phi(\mathcal D_3)\}$,
one has
\[
|u|\leq C\ve \enspace \text{and} \enspace |\na_{t,x}^\al u|\leq C\ve(1+t)^{-1/2}
\enspace \text{for $|\al|=1,2$.}
\]
Together with (i), this yields $\|u(t,x)\|_{C^1([0,
    T_{\ve}]\times{\mathbb R}^2)}\le C\ve$.

\smallskip

(iv) \ It holds $\ds\lim_{\ve\to 0}\ve \sqrt T_{\ve}=\tau_0$.

\smallskip

By Theorem~\ref{1-2} and the corresponding Nash-Moser-H\"ormander
iteration process, one infers that $\ds\lim_{\ve\to
  0}\tau_{\ve}=\tau_0$ for the solution $u(t,x)$ with variables $(r-t,
\th, \ve\sqrt t)$ in $\Phi(\mathcal D_3)$. This implies that the
lifespan $T_{\ve}$ satisfies
\begin{equation}\label{4-6}
\varlimsup_{\ve\to 0}\ve \sqrt T_{\ve}\le\tau_0.
\end{equation}

\eqref{2-32} and \eqref{4-6} together yield
\[
  \lim_{\ve\to 0}\ve \sqrt T_{\ve}=\tau_0.
\]
Collecting (i)-(iv) completes the proof of Theorem~\ref{thm1-1}. \qed



\end{document}